\def\AA{\tilde{A}_7}
\def\EE{\tilde{E}_6}
\def\XX{\tilde{D}_6\oplus\tilde{A}_1}
\tikzstyle{nodal}=[circle,draw,fill=black,inner sep=0pt, minimum width=4pt]
\tikzstyle{half-fiber}=[rectangle,draw=black,thick,inner sep=0pt, minimum width=5pt, minimum height=5pt]
\tikzset{double distance = 2pt}
\title{Enriques surfaces of zero entropy}
\author{Gebhard Martin}
\address{Mathematisches Institut \\ Universität Bonn \\ Endenicher Allee 60 \\ 53115 Bonn \\ Germany}
\email{gmartin@math.uni-bonn.de} 
\author{Giacomo Mezzedimi}
\address{Mathematisches Institut \\ Universität Bonn \\ Endenicher Allee 60 \\ 53115 Bonn \\ Germany}
\email{mezzedim@math.uni-bonn.de}
\author{Davide Cesare Veniani}
\address{Fachbereich Mathematik \\ Universität Stuttgart \\ Pfaffenwaldring 57 \\ 70569 Stuttgart \\ Germany}
\email{davide.veniani@mathematik.uni-stuttgart.de}
\date{\today}
\subjclass[2020]{14J28 (14J27 14J50 37E30)}
\keywords{Enriques surface, genus one fibration, half-fiber, entropy, automorphism group}
\begin{document}

\begin{abstract}
    We classify Enriques surfaces of zero entropy, or, equivalently, Enriques surfaces with a virtually abelian automorphism group.
\end{abstract}

\maketitle

\section{Introduction}
By a result of Nikulin \cite{Nikulin:infinite_en} and Barth--Peters \cite{BarthPeters}, the automorphism group of a very general complex Enriques surface is isomorphic to the $2$-congruence subgroup of the orthogonal group of the lattice $E_{10} \coloneqq U \oplus E_8$. This group is infinite and not virtually solvable, that is, it does not contain a solvable subgroup of finite index (see \Cref{prop:Enriques.zero.entropy}).
On the other end of the spectrum, there are families of Enriques surfaces with finite automorphism group, classified in \cite{Katsura.Kondo.Martin,Kondo,MartinFiniteAut,Nikulin}.

Therefore it is natural to ask whether there exist families of Enriques surfaces with an infinite, but less complicated (e.g., virtually abelian) automorphism group.
Barth and Peters \cite{BarthPeters} found one such family over $\IC$, and Mukai \cite{Mukai:OWR} sketched a proof that this is the only such family. 
Our main result is a complete and characteristic-free classification of Enriques surfaces with virtually abelian automorphism group.

We say that \(X\) is of \emph{type \(\AA\)}, \emph{type \(\EE\)}, or, respectively, \emph{type \(\XX\)} if \(X\) contains \((-2)\)-curves with the following dual graphs:
\[
    \begin{tikzpicture}[scale=0.6]
        \node (R1) at (180:2) [nodal] {};
        \node (R2) at (135:2) [nodal] {};
        \node (R3) at (90:2) [nodal] {};
        \node (R4) at (45:2) [nodal] {};
        \node (R5) at (0:2) [nodal] {};
        \node (R6) at (315:2) [nodal] {};
        \node (R7) at (270:2) [nodal] {};
        \node (R8) at (225:2) [nodal] {};
        \node (R9) at (intersection of R2--R7 and R3--R8) [nodal] {};
        \node (R10) at (intersection of R4--R7 and R3--R6) [nodal] {};
        \draw (R6)--(R7)--(R8)--(R1)--(R2)--(R3)--(R4) (R1)--(R9) (R5)--(R10) (R4)--(R5)--(R6);
        \draw (R7)--(R8)--(R1)--(R2)--(R3)--(R4) (R1)--(R9) ;
        \node at (270:3) {(type \(\tilde A_7\))};
    \end{tikzpicture}
\qquad
    \begin{tikzpicture}
        \node (R0) at (90:1.5) [nodal] {};
        \node (R1) at (90:1) [nodal] {};
        \node (R2) at (90:0.5) [nodal] {};
        \node (R3) at (0:0) [nodal] {};
        \node (R4) at (210:0.5) [nodal] {};
        \node (R5) at (210:1) [nodal] {};
        \node (R6) at (210:1.5) [nodal] {};
        \node (R7) at (330:0.5) [nodal] {};
        \node (R8) at (330:1) [nodal] {};
        \node (R9) at (330:1.5) [nodal] {};
        \draw (R0)--(R1)--(R2)--(R3)--(R4)--(R5)--(R6) (R3)--(R7)--(R8)--(R9);
        \node at (0,-1.3) {(type \(\tilde E_6\))};
    \end{tikzpicture}
\qquad
 \begin{tikzpicture}[scale=0.5]
    \node (R4) at (0,0) [nodal] {};
    \node (R5) at (1,1) [nodal] {};
    \node (R6) at (1,0) [nodal] {};
    \node (R7) at (2,0) [nodal] {};
    \node (R8) at (3,0) [nodal] {};
    \node (R9) at (4,0) [nodal] {};
    \node (R3) at (-1,0) [nodal] {};
    \node (R2) at (-2,0) [nodal] {};
    \node (R1) at (3,1) [nodal] {};
    \node (RX) at (-1,1) [nodal] {};
    \node (RXX) at (5,0) [nodal] {};
\draw (R2)--(R3)--(R6) (R5)--(R6)--(R9) (R1)--(R8) (R3)--(RX);
\draw[double] (R9)--(RXX);
    \node at (1.5,-2) {(type \(\XX\))};
\end{tikzpicture}
\]

\begin{theorem} \label{thm: main}
Let $X$ be an Enriques surface with infinite automorphism group over an algebraically closed field $k$ of characteristic $p\ge 0$. Then, the following are equivalent:
\begin{enumerate}
\item The automorphism group $\Aut(X)$ is virtually abelian.
\item The automorphism group $\Aut(X)$ is virtually solvable.
\item The Enriques surface \(X\) is of type \(\AA\), \(\EE\) or \(\XX\).
\end{enumerate}
\end{theorem}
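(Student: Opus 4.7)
The implication (1) $\Rightarrow$ (2) is immediate. For (2) $\Rightarrow$ (1), by \Cref{prop:Enriques.zero.entropy} the group $\Aut(X)$ acts on $\operatorname{Num}(X) \simeq E_{10}$ with finite kernel, so it suffices to treat virtually solvable subgroups of $O(E_{10})$ acting on the hyperbolic space $\mathbb{H}^9 := \mathbb{H}(E_{10} \otimes \mathbb{R})$. Such a subgroup is elementary in its action on $\mathbb{H}^9$, hence either finite, parabolic (fixing a unique isotropic ray at the boundary), or loxodromic (preserving a geodesic and its two endpoints); both infinite elementary cases are virtually abelian, yielding (2) $\Rightarrow$ (1).

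For (3) $\Rightarrow$ (1), in each of the three types the displayed $(-2)$-curves form an extended Dynkin diagram; summing them with the multiplicities prescribed by the affine Dynkin structure produces a primitive isotropic class $f \in \operatorname{Num}(X)$ whose effective representative $F$ is a half-fiber of a genus one fibration $\pi \colon X \to \mathbb{P}^1$ defined by $|2F|$. Since the combinatorial configuration determines $f$ intrinsically, $\Aut(X)$ preserves $\pi$. The induced map $\Aut(X) \to \Aut(\mathbb{P}^1)$ has finite image (it stabilizes the finite set of critical values of $\pi$), and its kernel embeds into the Mordell--Weil group of the Jacobian $J(\pi)$, which is finitely generated abelian. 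Hence $\Aut(X)$ is virtually abelian.

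The heart of the theorem is (2) $\Rightarrow$ (3). Assume $\Aut(X)$ is infinite and virtually solvable, hence elementary by the first paragraph. I expect the loxodromic case to be ruled out for Enriques surfaces: a loxodromic automorphism of $X$, combined with the rich Weyl group action on the set of nodal classes, should produce additional non-commuting automorphisms, leading either to a free subgroup (contradicting virtual solvability) or to an extra invariant isotropic ray (reducing to the parabolic case). In the parabolic case the fixed isotropic ray descends to a primitive isotropic class in $\operatorname{Num}(X)$, producing an $\Aut(X)$-invariant genus one fibration $\pi$.

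It then remains to classify those pairs $(X, \pi)$ for which the subgroup of automorphisms preserving $\pi$ is infinite, and to identify the resulting $(-2)$-curve configurations as exactly the three dual graphs in the statement. The approach is via the Shioda--Tate formula adapted to the Enriques setting (with $\rho(X) = 10$ and half-fibers of $\pi$ contributing to the trivial lattice), combined with the requirement that $\Aut(X)$ have infinite image through translations by the Mordell--Weil group of $J(\pi)$ modulo the finite action on the base. A case-by-case analysis of the possible fiber configurations compatible with these constraints should collapse to exactly the three types $\AA$, $\EE$, and $\XX$. The principal obstacle will be carrying out this classification uniformly in all characteristics $p \ge 0$, handling in particular the quasi-elliptic fibrations and non-reduced fibers that arise in characteristics $2$ and $3$, together with the delicate lattice-theoretic bookkeeping of root sub-lattices embedded in $E_{10}$.
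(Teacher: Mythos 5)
Your reduction of (1) $\Leftrightarrow$ (2) to elementary groups of isometries of hyperbolic $9$-space is essentially the paper's argument (\Cref{prop:Enriques.zero.entropy}), but the rest of the proposal has genuine gaps. First, in (3) $\Rightarrow$ (1) you assert that the displayed configuration ``determines $f$ intrinsically,'' so that $\Aut(X)$ preserves $|2F|$. This does not follow: $X$ carries many genus one fibrations and, a priori, many copies of such configurations, and nothing in the combinatorics alone forces an automorphism to preserve this particular isotropic class. The paper has to work for this invariance: for type \(\AA\) it produces a unique (hence central) numerically trivial involution $\sigma$ and identifies $|2F_0|$ as the only fibration with a half-fiber inside the fixed locus of $\sigma$ (\Cref{lem: num.triv.of.BP}, \Cref{thm:AA}); for type \(\EE\) it uses that the half-fiber of type $\IV^*$ is the conductrix, a canonically defined divisor; for type \(\XX\) it combines the conductrix with the numerically trivial involution (\Cref{lem: xxfixedlocus}, \Cref{prop: xximportantproperties}). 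Your subsidiary claim that $\Aut(X)\to\Aut(\mathbb{P}^1)$ has finite image because it stabilizes the critical values also fails when there are only two such values (as happens for the isotrivial fibrations of type \(\EE\)); this is harmless for (3) $\Rightarrow$ (1), since invariance of the fibration already gives zero entropy, but it matters if you want the automorphism groups.

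Second, you only ``expect'' the loxodromic case to be excluded in (2) $\Rightarrow$ (3). The paper's mechanism is different from the one you sketch: since $\Aut(X)$ is infinite, \Cref{prop:Enriques.finite.Aut} (the classification of Enriques surfaces with finite automorphism group) provides a non-extremal genus one fibration, whose Mordell--Weil group furnishes a parabolic element of infinite order; an elementary group of hyperbolic type has no such element, so the group must be of parabolic type. Third, and most seriously, the actual classification --- showing that a unique invariant non-extremal fibration $|2F_0|$ forces one of the three dual graphs --- is not carried out, and the route you propose (Shioda--Tate for $|2F_0|$ alone) cannot succeed: infinitude of $\MW(|2F_0|)$ only bounds the trivial lattice and is satisfied by far more fiber configurations than the three types. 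The paper's argument instead plays $|2F_0|$ against auxiliary fibrations: it picks a half-fiber $F_1$ with $F_0.F_1=1$, shows that $\MW(|2F_1|)$ must preserve $|2F_0|$ and is therefore a finite $2$-elementary group with a very short list of possible reducible fibers (\Cref{prop: alternative}, \Cref{cor: alternative}, \Cref{lem: excludeXX}), and then reconstructs the dual graph from how the fibers of $|2F_1|$ sit inside those of $|2F_0|$. Without this two-fibration interplay the case analysis does not close up.
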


We note that Enriques surfaces of type \(\EE\) and \(\XX\) only exist in characteristic~$2$ by \Cref{lem: halffibers}. Therefore, if $p\ne 2$, there exists a unique family of Enriques surfaces of zero entropy and infinite automorphism group, namely the family studied by Barth--Peters. This confirms Mukai's statement in \cite{Mukai:OWR}.

The classification of Enriques surfaces with virtually abelian automorphism group is closely related to the notion of \emph{entropy} of automorphisms, which we recall in \Cref{sec: entropy}. In \Cref{prop:Enriques.zero.entropy}, we prove that an Enriques surface has virtually abelian automorphism group if and only if it has zero entropy.

\begin{remark}
The classification of K3 surfaces of zero entropy has recently been completed by Yu \cite{Yu:K3.entropy} and Brandhorst--Mezzedimi \cite{Brandhorst.Mezzedimi:K3.zero_entropy}. By \cite[Remark~5.5]{Brandhorst.Mezzedimi:K3.zero_entropy}, the K3~covers of very general Enriques surfaces of type \(\AA\) have zero entropy as well. 
In particular, their automorphism group is infinite, but virtually abelian. 
This is in stark contrast to the K3 covers of Enriques surfaces with finite automorphism group, which instead have an infinite, non virtually solvable automorphism group by \cite[Corollary~5.4.(2)]{Brandhorst.Mezzedimi:K3.zero_entropy}.
\end{remark}

\begin{remark}
The automorphism group of Enriques surfaces of type \(\AA\) was computed by Barth--Peters \cite{BarthPeters}. In \cite[Theorem 4.12]{BarthPeters}, they claim that the automorphism group of such surfaces is never larger than $\IZ/4\IZ \times D_\infty$, where $D_\infty$ denotes the infinite dihedral group. This turns out to be false: indeed, we show in \Cref{prop: Aut of BP} that there exists a single surface in the family, whose automorphism group is a non-split extension of $\IZ/2\IZ$ by $\IZ/4\IZ \times D_\infty$ (cf. also \Cref{remark:BP.wrong}).
\end{remark}

This article is structured as follows. 
In \Cref{sec: preliminaries}, we recall preliminaries on genus~\(1\) fibrations on Enriques surfaces and investigate the action of the Mordell--Weil group of the Jacobian on the fibers of the fibration. We also quickly recall the notion of entropy of automorphisms. 
In \Cref{sec: examples}, we show that Enriques surfaces of type \(\AA\), \(\EE\) and \(\XX\) have zero entropy and we compute their automorphism groups and number of moduli. 
Finally, in \Cref{sec: Classification}, we prove \Cref{thm: main} using the connection with zero entropy given in \Cref{prop:Enriques.zero.entropy}.

\subsection*{Acknowledgments} 
We thank Shigeyuki Kond\=o for making us aware of Mukai's report~\cite{Mukai:OWR}. We are grateful to Igor Dolgachev and Matthias Sch\"utt for helpful comments on a first draft of this article.

\section{Preliminaries} \label{sec: preliminaries}


An \emph{Enriques surface} is a smooth and proper surface \(X\) over an algebraically closed field~\(k\) with numerically trivial canonical class \(K_X\) and \(b_2(X) = 10\). We let \(p\) be the characteristic of \(k\). For $p \ne 2$, the canonical bundle $\omega_X$ of an Enriques surface $X$ is $2$-torsion. On the other hand, recall that for $p = 2$, there are three types of Enriques surfaces, with different torsion component $\Pic^\tau_X$ of the identity of their Picard scheme: \emph{classical}, with $\Pic^\tau_X \cong \IZ/2\IZ$, \emph{ordinary}, with $\Pic^\tau_X \cong \mu_2$, and \emph{supersingular}, with $\Pic^\tau_X \cong \alpha_2$. 
Classical Enriques surfaces have a $2$-torsion canonical bundle $\omega_X$, while for ordinary and supersingular Enriques surfaces, $\omega_X \cong \mathcal{O}_X$ is trivial.

Let us briefly explain the contents of this section. In \Cref{sec: genusone}, we collect some known results about genus~\(1\) fibrations, with particular focus on Enriques surfaces. In \Cref{sec:MW}, we define the Mordell--Weil group of a genus~\(1\) fibration, and we collect several results on the action of this group on the reducible fibers.
Finally, in \Cref{sec: entropy}, we recall the definition of the algebraic entropy of automorphisms and give a characterization of Enriques surfaces of zero entropy (cf. \Cref{prop:Enriques.zero.entropy}).

\subsection{Genus~\texorpdfstring{\(1\)}{1} fibrations on Enriques surfaces} \label{sec: genusone}


For a comprehensive account on genus~\(1\) fibrations, we refer to \cite[Chapter~4]{Enriques_I}.
Let \(X,Y\) be normal varieties over a field. A \emph{genus~\(1\) fibration} is defined as a proper, surjective and flat morphism \(f\colon X \to Y\) with \(f_*\cO_X = \cO_Y\), such that the generic fiber \(X_\eta\) is a geometrically integral and regular curve of genus~\(1\). The fibration \(f\) is called \emph{elliptic} if \(X_\eta\) is smooth, otherwise it is called \emph{quasi-elliptic}. 
Following Kodaira's notation, we recall that the non-multiple singular fibers of genus~\(1\) fibrations are either \emph{additive} and denoted by $\II,\III,\IV,\IV^*,\III^*,\II^*,$ or $\I_n^*$, or \emph{multiplicative} and denoted by $\I_n$.

Now let $X$ be an Enriques surface.
A \emph{half-fiber} on~\(X\) is a non-trivial, connected, nef divisor $F$ with $F^2 = 0$ and $h^0(F) = 1$. Every Enriques surface carries a half-fiber (see, e.g., \cite[Corollary~2.3.4]{Enriques_I}). Moreover, every genus~\(1\) fibration on~\(X\) is induced by a linear system of the form $|2F|$, where $F$ is a half-fiber on~\(X\). The following result characterizes the structure of half-fibers on~\(X\):

\begin{lemma}[{\cite[Theorem~4.10.3]{Enriques_I}}] \label{lem: genus.1.fibrations} \label{lem: halffibers}
Let \(f\colon X \rightarrow \IP^1\) be a genus~\(1\) fibration on an Enriques surface~\(X\).
\begin{itemize}
    \item If \(p \neq 2\), then \(f\) is an elliptic fibration with two half-fibers, and each of them is either non-singular, or singular of multiplicative type.
    \item If \(p = 2\) and \(X\) is classical, then \(f\) is an elliptic or quasi-elliptic fibration with two half-fibers, and each of them is either an ordinary elliptic curve, or singular of additive type.
    \item If \(p = 2\) and \(X\) is ordinary, then \(f\) is an elliptic fibration with one half-fiber, which is either an ordinary elliptic curve, or singular of multiplicative type.
    \item If \(p = 2\) and \(X\) is supersingular, then \(f\) is an elliptic or quasi-elliptic fibration with one half-fiber, which is either a supersingular elliptic curve, or singular of additive type. 
\end{itemize}
\end{lemma}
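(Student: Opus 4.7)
The plan is to combine the canonical bundle formula for genus-1 fibrations with the classification of the Picard scheme torsion of Enriques surfaces. For a genus-1 fibration $f\colon X \to \IP^1$, the canonical bundle formula reads
\[
K_X \equiv (\ell - 1) F + \sum_i a_i F_i,
\]
where $F$ denotes a general fiber, $m_i F_i$ are the multiple fibers, $a_i = m_i - 1$ in the tame case, and $\ell \ge 0$ is the length of the torsion part of $R^1 f_* \cO_X$, which vanishes in the absence of wild fibers.

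First I would count the half-fibers using $K_X \equiv 0$, which yields $\sum a_i / m_i = 1 - \ell$. In characteristic $p \neq 2$, wild fibers cannot occur, so $\ell = 0$ and the tame identity $\sum (1 - 1/m_i) = 1$ forces exactly two double fibers. The same argument applies to classical Enriques surfaces in characteristic $2$, using that $\omega_X$ is $2$-torsion. For ordinary and supersingular Enriques surfaces, $\omega_X$ is trivial, and the linear (not merely numerical) version of the formula together with the structure of $\Pic^\tau_X$ (which forces $\ell \ge 1$) yields exactly one wild multiple fiber, necessarily of multiplicity~$2$.

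Next I would rule out quasi-elliptic fibrations outside characteristic~$2$. Such fibrations require every closed fiber to be a cuspidal rational curve, which by a theorem of Bombieri--Mumford forces $p \in \{2,3\}$; a further Euler-characteristic argument then excludes $p = 3$ on Enriques surfaces. So for $p \neq 2$ the fibration is elliptic, and a classical result constrains each half-fiber to be either a smooth elliptic curve or of multiplicative type $\I_n$, since any additive multiple fiber of a tame elliptic fibration is impossible. For $p = 2$, wild additive multiple fibers become possible, and every fiber of a quasi-elliptic surface is automatically of additive type.

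The main obstacle is the final step: identifying the isomorphism type (ordinary versus supersingular) of the smooth half-fibers in characteristic~$2$. Here I would use that the restriction map $\Pic^\tau_X \to \Pic^\tau_F$ is injective for a half-fiber $F$, so $F$ contains a subgroup scheme matching the type of $\Pic^\tau_X$. This forces $F$ to be an ordinary elliptic curve in the classical case ($\Pic^\tau_X \cong \IZ/2\IZ$) and in the ordinary case ($\Pic^\tau_X \cong \mu_2$), and a supersingular elliptic curve in the supersingular case ($\Pic^\tau_X \cong \alpha_2$). Combining the multiplicity count, the elliptic/quasi-elliptic dichotomy, and the ordinary/supersingular distinction yields the four cases of the lemma.
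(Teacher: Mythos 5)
First, a remark on the comparison itself: the paper does not prove this lemma — it is quoted verbatim from \cite[Theorem~4.10.3]{Enriques_I} — so there is no internal argument to measure you against; your outline does follow the standard proof from that reference (canonical bundle formula to count and bound the multiple fibers, then restriction of $\Pic^\tau_X$ to a half-fiber to pin down its type). That said, several steps are asserted in a way that either fails or hides the real work. The claim that ``in characteristic $p\neq 2$ wild fibers cannot occur, so $\ell=0$'' is false as a general statement: wild fibers exist in every positive characteristic. The correct justification is that $\ell=h^0$ of the torsion of $R^1f_*\cO_X$ embeds into $H^1(X,\cO_X)$ via Leray (using $H^1(\IP^1,\cO_{\IP^1})=0$), and $h^1(X,\cO_X)=0$ precisely when $\omega_X\not\cong\cO_X$, i.e.\ for $p\neq2$ and for classical surfaces. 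Relatedly, ``necessarily of multiplicity $2$'' for the wild fiber in the non-classical case is not automatic: $\ell=1$ only gives $a_1=0$, hence trivial normal bundle and multiplicity $2^k$, and excluding $k\geq2$ is a genuine argument in the reference (an analysis of the filtration $h^0(\cO_{iF})$), not a one-line consequence. Your proposed Euler-characteristic exclusion of quasi-elliptic fibrations in characteristic $3$ is also not the standard route and I do not see how it closes; the usual argument is that multiple fibers of quasi-elliptic fibrations have multiplicity $p$, while all multiple fibers on an Enriques surface have multiplicity $2$.

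Second, the sketch does not cover the whole statement. You apply the $\Pic^\tau$-restriction principle only to smooth half-fibers, but the lemma also asserts that classical and supersingular surfaces never have multiplicative half-fibers, that ordinary surfaces never have additive half-fibers, and in particular that ordinary Enriques surfaces carry no quasi-elliptic fibrations at all. These do follow from the same principle applied to singular half-fibers --- in characteristic $2$ one has $\IZ/2\IZ,\alpha_2\hookrightarrow\mathbb{G}_a$ but $\mu_2\not\hookrightarrow\mathbb{G}_a$, while $\mu_2\hookrightarrow\mathbb{G}_m$ but $\IZ/2\IZ,\alpha_2\not\hookrightarrow\mathbb{G}_m$ --- but you need to say so, and you also need to justify the injectivity of $\Pic^\tau_X\to\Pic^\tau_F$ itself. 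For the classical case this is the computation that $\omega_X|_F\cong\cO_F(-F)$ has order exactly $2$ (tameness of the half-fiber); for $\mu_2$ and $\alpha_2$ it is the nontrivial fact that the canonical cover restricts to a nontrivial torsor over each half-fiber. As it stands, the proposal is a correct road map with the two hardest inputs (multiplicity $2$ in the wild case, and the restriction lemma for $\Pic^\tau$) taken on faith.
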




\subsection{Mordell--Weil group of the Jacobian} \label{sec:MW}
In \Cref{lem: genus.1.fibrations}, we saw that every genus~\(1\) fibration $f \colon X \to \mathbb{P}^1$ on an Enriques surface $X$ has a double fiber, hence in particular no section.
The associated Jacobian fibration $J(f) \colon J(X) \to \IP^1$ is a genus~\(1\) fibration on a rational surface $J(X)$ by \cite[Proposition 4.10.1]{Enriques_I}, and $J(f)$ is (quasi-)elliptic if and only if $f$ is (quasi-)elliptic. More precisely, by \cite[Theorem 6.6]{LiuLorenziniRaynaud}, the fibers of~\(f\) and $J(f)$ have the same Kodaira types.
The natural action of $\MW(J(f))$ on the generic fiber of~\(f\) extends to a regular action on~\(X\). If \(f \colon X \rightarrow \IP^1\) is the genus~\(1\) fibration induced by the pencil \(|2F|\), we put
\[
    \MW(|2F|) \coloneqq \MW(J(f)),
\]
and we identify this group with a subgroup of \(\Aut(X)\) (see, e.g., \cite[Theorem~3.3]{DolgachevMartin}). It is well-known how this group acts on simple fibers of~\(f\):

\begin{lemma} \label{lem: MWonJAC}
Let $f \colon X \to B$ be a genus~\(1\) fibration with Jacobian $J(f) \colon J(X) \to B$. Let $b \in B$ be a point and let $X_b$ and $J(X)_b$ be the fibers of~\(f\) and $J(f)$ over $b$. Assume that $X_b$ is simple.  
Then, there exists an $\MW(J(f))$-equivariant isomorphism $J(X)_b \cong X_b$.
\end{lemma}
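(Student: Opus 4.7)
The plan is to exploit the fact that, away from multiple fibers, $f$ is a torsor under its Jacobian fibration $J(f)$, and that the $\MW(J(f))$-action on $X$ was defined precisely as the torsor action coming from sections of $J(f)$.

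First, I would restrict to the open subset $B^\circ \subseteq B$ over which $f$ has no multiple fibers; by \Cref{lem: halffibers}, the complement of $B^\circ$ in $B$ consists of at most two points, and by hypothesis $b \in B^\circ$. Over $B^\circ$, the smooth relative locus $J(X)^{\mathrm{sm}} \to B^\circ$ is a commutative group scheme (with zero the zero section of $J(f)$), and the smooth relative locus $X^{\mathrm{sm}}|_{B^\circ} \to B^\circ$ is a $J(X)^{\mathrm{sm}}$-torsor. This is the standard relationship between a genus~$1$ fibration without multiple fibers and its Jacobian, and in the generality we need (including the quasi-elliptic case and characteristic~$2$) it is recorded in \cite[Chapter~4]{Enriques_I} together with \cite{LiuLorenziniRaynaud}.

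Second, I would produce the isomorphism $J(X)_b \cong X_b$ on smooth loci and extend. Pick any $k$-point $x_0 \in X_b^{\mathrm{sm}}$ (which exists since $X_b$ is reduced) and define
\[
    \varphi \colon J(X)_b^{\mathrm{sm}} \longrightarrow X_b^{\mathrm{sm}}, \qquad g \longmapsto g \cdot x_0.
\]
By the torsor property on the simple fiber this is an isomorphism of schemes. Since $X_b$ and $J(X)_b$ have the same Kodaira type by \cite[Theorem~6.6]{LiuLorenziniRaynaud} and each is the unique genus~$1$ curve obtained by attaching the non-smooth locus to its smooth part in the prescribed combinatorial way, $\varphi$ extends canonically to a scheme isomorphism $\varphi \colon J(X)_b \xrightarrow{\sim} X_b$.

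Third, I would check $\MW(J(f))$-equivariance. A section $\sigma \in \MW(J(f))$ acts on $J(X)_b$ by translation $g \mapsto \sigma(b) + g$, and by construction its action on $X$ is the one induced from the torsor structure, so on $X_b$ it sends $x \mapsto \sigma(b) \cdot x$. The torsor axiom gives
\[
    \varphi\bigl(\sigma(b) + g\bigr) = (\sigma(b) + g) \cdot x_0 = \sigma(b) \cdot (g \cdot x_0) = \sigma \cdot \varphi(g),
\]
first on the smooth loci and then, by density and continuity, on all of $J(X)_b$.

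The main obstacle is the setup in the first step: pinning down, in full uniformity across elliptic/quasi-elliptic fibrations and all characteristics, the torsor structure over the locus of simple fibers and the fact that $\MW(J(f))$ acts on $X$ via this torsor structure. Once that foundational statement is in place, the explicit identification and the equivariance check in the second and third steps are essentially formal.
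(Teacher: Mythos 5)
Your starting point is the same as the paper's (the smooth locus of $f$ is a torsor under the smooth locus of $J(f)$, and the $\MW$-action is the torsor action), but there is a genuine gap at your extension step. The torsor structure only gives you an isomorphism $\varphi\colon J(X)_b^{\mathrm{sm}} \to X_b^{\mathrm{sm}}$ between the loci where the \emph{fibrations} are smooth. For fibers with components of multiplicity $\ge 2$ (types $\I_n^*$, $\II^*$, $\III^*$, $\IV^*$, which do occur as simple fibers on Enriques surfaces), this locus consists only of the open parts of the reduced components and is \emph{not dense} in the fiber; an isomorphism of these loci does not determine, and need not extend to, an isomorphism of the full fibers. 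Your appeal to "the unique genus~$1$ curve obtained by attaching the non-smooth locus in the prescribed combinatorial way" is not a canonical extension procedure: knowing the Kodaira types agree gives abstract isomorphism of the fibers, but not one compatible with $\varphi$ and with the $\MW$-action on the components you have discarded. The same problem infects your equivariance check, since "density and continuity" fails on the multiple components.

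The paper closes exactly this gap by working one dimension up: since $X_b$ is simple, $f$ admits a section over an \'etale neighborhood $U$ of $b$, which trivializes the torsor over $U$; then $X_U$ and $J(X)_U$ are both the relatively minimal proper regular model of their common smooth part, so the isomorphism of smooth loci extends uniquely and equivariantly to an isomorphism of \emph{surfaces} $X_U \cong J(X)_U$, which one then restricts to the fiber over (a point above) $b$. If you replace your fiberwise choice of $x_0$ by a local section over an \'etale neighborhood and invoke uniqueness of the relatively minimal proper regular model for the extension, your argument becomes the paper's.
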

\begin{proof}
Since $X_b$ is simple, $f$ admits a section over an étale neighborhood $U$ of $b \in B$. As the smooth locus of~\(f\) is a torsor under the smooth locus of $J(f)$ and $X_U$ and $J(X)_U$ are the unique relatively minimal proper regular models  of the smooth part of the respective fibration, there is a $\MW(J(f))$-equivariant isomorphism between $X_U$ and $J(X)_U$. Restricting to a point of $U$ lying over $b$, we obtain the desired isomorphism. 
\end{proof}




\begin{table}[t]
    \centering
    \begin{tabular}{lll}
        \toprule
        Reducible fibers & \(\MW\) & Action on dual graph of a reducible fiber \\
        \midrule
        \(\II^*\)   & \{0\} & trivial
        \\
        \(\I_4^*\)  & \(\IZ/2\IZ\) & reflection along central vertex \\
        \(\I_9\)    & \(\IZ/3\IZ\) & rotation of order $3$ \\
        \(\III^*\), \(\III\)  & \(\IZ/2\IZ\) & transitive on simple components\\
        \(\III^*\), \(\I_2\)  & \(\IZ/2\IZ\) & transitive on simple components\\
        \(\I_8\), \(\III\) & \(\IZ/4\IZ\) & rotation of order $4$ on $\I_8$,  transitive on $\III$\\
        \(\I_8\), \(\I_2\) & \(\IZ/4\IZ\) & rotation of order $4$ on $\I_8$,  transitive on $\I_2$\\
        \(\IV^*\), \(\IV\) & \(\IZ/3\IZ\) & transitive on simple components\\
        \(\IV^*\), \(\I_3\) & \(\IZ/3\IZ\) & transitive on simple components\\
        \(\I_1^*\), \(\I_4\) & \(\IZ/4\IZ\) & transitive on simple components\\
        \(\I_0^*\), \(\I_0^*\) & \((\IZ/2\IZ)^2\) & transitive on simple components\\
        \(\I_5\), \(\I_5\) & \(\IZ/5\IZ\) & transitive on simple components \\
        \(\I_2^*\), \(\I_2\), \(\I_2\) & \((\IZ/2\IZ)^2\) & transitive on simple components \\
        \(\I_6\), \(\IV\), \(\I_2\) & \(\IZ/6\IZ\) & transitive on simple components \\
        \(\I_6\), \(\I_3\), \(\III\) & \(\IZ/6\IZ\) & transitive on simple components \\
        \(\I_6\), \(\I_3\), \(\I_2\) & \(\IZ/6\IZ\) & transitive on simple components \\
        \(\I_4\), \(\I_4\), \(\I_2\), \(\I_2\) & \(\IZ/2\IZ \times \IZ/4\IZ\) &transitive on simple components \\
        \(\I_3\), \(\I_3\), \(\I_3\), \(\I_3\) & \((\IZ/3\IZ)^2\) &transitive on simple components \\
        \bottomrule
    \end{tabular}
    \caption{Extremal elliptic fibrations on rational surfaces.}
    \label{tab:elliptic}
\end{table}

\begin{table}[t]
    \centering
    \begin{tabular}{lll}
        \toprule
        Reducible fibers & \(\MW\) & Action on dual graph of a reducible fiber \\
        \midrule
        \(\II^*\)   & \{0\}  & trivial \\
        \(\I_4^*\)  & \(\IZ/2\IZ\)  & reflection along central vertex \\
        \(\III^*\), \(\III\)   & \(\IZ/2\IZ\)  & transitive on simple components \\
        \(\I_0^*\), \(\I_0^*\) & \((\IZ/2\IZ)^2\)  & transitive on simple components \\
        \(\I_2^*\), \(\III\), \(\III\) & \((\IZ/2\IZ)^2\)  & transitive on simple components \\
        \(\I_0^*\), \(4\times \III\) & \((\IZ/2\IZ)^3\)  & transitive on simple components \\
        \(8 \times \III\) & \((\IZ/2\IZ)^4\)  & transitive on simple components \\
        \bottomrule
    \end{tabular}
    \caption{Quasi-elliptic fibrations on rational surfaces in characteristic $2$.}
    \label{tab:quasi-elliptic}
\end{table}

We say that a genus~\(1\) fibration \(f\) is \emph{extremal} if \(\MW(J(f))\) is a finite group.
Any quasi-elliptic fibration of a smooth and proper surface is extremal (see, e.g., \cite[Theorem 4.3.3]{Enriques_I}). 
It will turn out that extremal rational genus~\(1\) fibrations with $2$-elementary Mordell--Weil group play a fundamental role in the classification of Enriques surfaces of zero entropy. For the convenience of the reader, we recall in \Cref{tab:elliptic}  and \Cref{tab:quasi-elliptic} the classification of extremal elliptic and quasi-elliptic fibrations on rational surfaces (cf. \cite{Ito:char2, Lang1, Lang2, MirandaPersson, Naruki}). 
Furthermore, we know exactly how the sections of such a fibration meet the reducible fibers, and thus, using \Cref{lem: MWonJAC}, we observe the following: 

\begin{corollary} \label{cor: actiononsimplefiber}
Let $f \colon X \to \mathbb{P}^1$ be an extremal genus~\(1\) fibration of an Enriques surface $X$. If \(G\) is a simple reducible fiber of \(f\), then $\MW(J(f))$ acts on the dual graph of \(G\) as in \Cref{tab:elliptic} and \Cref{tab:quasi-elliptic}.
\end{corollary}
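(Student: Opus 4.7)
The strategy is to transport the question from the Enriques surface $X$ to its Jacobian rational surface $J(X)$, where the answer is tabulated.

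First, I would observe that because $f$ is extremal, so is its Jacobian $J(f)$, since $\MW(J(f))$ is by definition the Mordell--Weil group on both sides. By \cite[Proposition~4.10.1]{Enriques_I}, $J(X)$ is a rational surface, and by \cite[Theorem~6.6]{LiuLorenziniRaynaud} the Kodaira types of corresponding fibers of $f$ and $J(f)$ agree, and $J(f)$ is (quasi-)elliptic if and only if $f$ is. Therefore $J(f)$ is an extremal rational genus~$1$ fibration whose list of reducible fibers matches that of $f$, and it appears as exactly one row of \Cref{tab:elliptic} (in the elliptic case) or \Cref{tab:quasi-elliptic} (in the quasi-elliptic case, which forces $p=2$).

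Next, since $G$ is a simple reducible fiber, \Cref{lem: MWonJAC} provides an $\MW(J(f))$-equivariant isomorphism between $G$ and the corresponding fiber of $J(f)$, which descends to an $\MW(J(f))$-equivariant isomorphism of their dual graphs. Hence the action of $\MW(J(f))$ on the dual graph of $G$ is identified with the action on the dual graph of the corresponding fiber of the rational fibration $J(f)$.

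It then remains to invoke the classification that fills in the last column of the two tables, which is due to the references cited before the statement (\cite{Ito:char2, Lang1, Lang2, MirandaPersson, Naruki}): for each extremal rational (quasi-)elliptic fibration, the image of $\MW$ inside the automorphism group of the dual graph of every reducible fiber is well known, coming from the fact that in each case a set of sections of the Jacobian fibration representing the Mordell--Weil cosets can be written down explicitly, so one reads off how they meet the components. I expect the only mildly delicate point is bookkeeping in the quasi-elliptic case with multiple $\III$-fibers, where one needs to confirm transitivity on simple components separately for each such fiber; but since all such fibers are permuted by the $2$-elementary $\MW$, the entries in \Cref{tab:quasi-elliptic} follow by the same argument. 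With this, the corollary is proved.
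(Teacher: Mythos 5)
Your proposal is correct and follows essentially the same route as the paper: the statement is deduced by combining the tabulated classification of extremal rational genus~$1$ fibrations (where the intersection pattern of sections with reducible fibers is known) with the $\MW(J(f))$-equivariant isomorphism of \Cref{lem: MWonJAC} between a simple fiber of $f$ and the corresponding fiber of $J(f)$. The paper treats this as an immediate observation and gives no further argument beyond what you wrote.
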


Describing the action of $\MW(J(f))$ on the half-fibers of~\(f\) is, in general, more delicate. If $f$ admits multiplicative half-fibers, then one can use the K3 cover to study this action. Recall that the symmetry group of the dual graph of a configuration of type~$\I_n$ with $n \geq 3$ is isomorphic to $D_{2n}$, the dihedral group of order $2n$. In analogy with the classical representation of $D_{2n} \cong \mathbb{Z}/n\mathbb{Z} \rtimes \mathbb{Z}/2\mathbb{Z}$, we call elements in the first factor \emph{rotations} and all other elements \emph{reflections}.

\begin{lemma} \label{lem: action.on.In}
Let $f \colon X \to \mathbb{P}^1$ be an elliptic fibration of an Enriques surface $X$. Let $F$ be a half-fiber of~\(f\) and assume that $F$ is of type~$\I_n$. 
Then, the following hold:
\begin{enumerate}
\item If $g \in \Aut(X) \setminus \MW(J(f))$ is an involution preserving each fiber of~\(f\), then $g$ acts as a reflection on~\(F\).
\item If $g \in \MW(J(f))$ acts as a rotation of odd order $r$ on the fiber of $J(f)$ corresponding to $F$, then it acts as a rotation of order $r$ on~\(F\).
\item If $g \in \MW(J(f))$ acts as a rotation of even order $r$ on the fiber of $J(f)$ corresponding to $F$, then it acts as a rotation of order $r/2$ on~\(F\). 
\end{enumerate}
\end{lemma}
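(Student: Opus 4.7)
The plan is to pass to the canonical (K3) double cover $\pi\colon Y\to X$, with covering involution $\sigma$, and reduce to \Cref{lem: MWonJAC}, whose hypothesis requires the reducible fiber in question to be simple. The main preliminary claim is that the half-fiber $F$ of type $\I_n$ lifts via $\pi$ to a simple fiber $\tilde F$ of type $\I_{2n}$ on the induced genus~\(1\) fibration $\tilde f\colon Y\to \mathbb{P}^1$, and that $\sigma$ restricts to the antipodal involution $C_i\mapsto C_{i+n}$ on the cycle $\tilde F$. This follows from adjunction: $\mathcal{O}_X(F)|_F$ is a non-trivial $2$-torsion element of $\Pic(F)$ (nontrivial precisely because $F$ is a half-fiber and not a full fiber, and $2$-torsion because $\mathcal{O}_X(2F)|_F = f^*\mathcal{O}_{\mathbb{P}^1}(1)|_F$ is trivial), so the restriction of $\pi$ to $F$ is a connected étale double cover of the $\I_n$ cycle, which is forced to be the $\I_{2n}$ cycle modulo antipodal rotation. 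Any $g\in \Aut(X)$ preserving each fiber of $f$ admits a lift $\tilde g\in \Aut(Y)$ commuting with $\sigma$ and preserving each fiber of $\tilde f$; in particular $\tilde g$ preserves $\tilde F$.

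For part~(1), a fibration-preserving involution $g\notin \MW(J(f))$ acts on the generic fiber $X_\eta$ as an inversion rather than a translation, since translations form precisely $\MW(J(f))$. Consequently $\tilde g$ acts as an inversion on the generic fiber of $\tilde f$ and, by \Cref{lem: MWonJAC} applied to the simple fiber $\tilde F$, as a reflection on $\tilde F$. Every reflection on $\I_{2n}$ commutes with the antipodal $\sigma$ and descends to a reflection on the quotient cycle $F = \tilde F/\sigma$, which gives the claim.

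For parts~(2) and~(3), the lift $\tilde g$ of $g\in \MW(J(f))$ lies in $\MW(J(\tilde f))$ and acts on $\tilde F$ as a rotation by \Cref{lem: MWonJAC}. The Jacobian $J(\tilde f)$ is a minimal regular model of the pullback $J(f)\times_{\mathbb{P}^1}\mathbb{P}^1$ along the double cover $\mathbb{P}^1\to \mathbb{P}^1$ ramified over the half-fiber locus, under which monodromy doubling transforms the $\I_n$ fiber of $J(f)$ at $b$ into the $\I_{2n}$ fiber of $J(\tilde f)$. A local analysis of this base change shows that if $g$ meets the $\I_n$ cycle $J(f)_b$ in the component indexed by $k$, then $\tilde g$ meets the $\I_{2n}$ cycle in the component indexed by $2k$; hence $\tilde g$ rotates $\tilde F$ by $2k$ positions. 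Descending via $\sigma$ yields rotation by $2k$ modulo $n$ on the cycle $F$, and writing $r = n/\gcd(k,n)$ a short computation gives $\gcd(2k,n) = n/r$ when $r$ is odd and $2n/r$ when $r$ is even, so that $\tilde g$ acts on $F$ as a rotation of order $r$ or $r/2$ respectively.

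The main obstacle is justifying the factor of two in the doubling step of parts~(2) and~(3): showing that the lifted section indexes component $2k$ rather than $k$ of the $\I_{2n}$ fiber of $J(\tilde f)$ requires a careful local analysis of the pullback of sections under ramified base change and of the Néron component groups. In characteristic~$2$, when $X$ is ordinary, the canonical cover is a flat $\mu_2$-torsor rather than an étale $\mathbb{Z}/2$-cover; a parallel argument, or a direct computation using the structure of genus~\(1\) fibrations on ordinary Enriques surfaces, handles this case.
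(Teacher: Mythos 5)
Your proof is correct and follows essentially the same route as the paper: pass to the K3 cover, identify $\pi^{-1}(F)$ as a simple fiber of type~$\I_{2n}$ with the covering involution acting as the antipodal rotation, apply \Cref{lem: MWonJAC} upstairs, and descend. One correction to your closing paragraph: for an \emph{ordinary} Enriques surface in characteristic~$2$ one has $\Pic^\tau_X \cong \mu_2$, so the canonical cover is a torsor under the Cartier dual $\mathbb{Z}/2\mathbb{Z}$ and hence is étale — this is exactly what the paper uses (a multiplicative half-fiber forces $X$ to be ordinary by \Cref{lem: halffibers}), so the separate $\mu_2$-argument you defer to is not needed; note also that your adjunction argument for connectedness of $\pi^{-1}(F)$ via the $2$-torsion class $\omega_X|_F$ does not literally apply there since $\omega_X$ is trivial, but connectedness follows simply because K3 fibrations have no multiple fibers.
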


\begin{proof}
Since $f$ admits a half-fiber of type~$\I_n$, the Enriques surface $X$ is ordinary if $p = 2$. 
Thus, the K3 cover $\pi \colon \widetilde{X} \to X$ is \'etale and a quotient by a fixed point free involution $\tau$. Since $F$ is half-fiber, $\pi^{-1}(F)$ is a (necessarily simple) fiber of an elliptic fibration $\widetilde{f}$ on $\widetilde{X}$, and since $\pi^{-1}(F) \to F$ is \'etale, $\pi^{-1}(F)$ is of type~$\I_{2n}$. The only fixed point free involution of such a configuration is a rotation of order $2$, hence $\tau$ acts as such a rotation. The preimage $\pi^{-1}(E)$ of a component $E$ of~\(F\) is the union of two components $\widetilde{E}$ and $\widetilde{E}'$ on opposite sides of $\pi^{-1}(F)$.

Now, for Claim (1), observe that $g$ lifts to an automorphism $\widetilde{g}$ of $\widetilde{X}$ that preserves the fibers of $\widetilde{f}$ and is not a translation. Since $\pi^{-1}(F)$ is a simple fiber, $\widetilde{g}$ acts as a reflection on $\pi^{-1}(F)$. Taking the quotient by $\tau$, we see that $g$ acts as a reflection on~\(F\), as claimed.

For Claims (2) and (3), observe that we can realize the Jacobian $J(\widetilde{f}) \colon J(\widetilde{X}) \to \mathbb{P}^1$ as the minimal resolution of the base change of $J(f) \colon J(X) \to \mathbb{P}^1$ along the morphism $\mathbb{P}^1 \to \mathbb{P}^1$ given by the finite part of the Stein factorization of $f \circ \pi$. We obtain a generically finite morphism $\pi' \colon J(\widetilde{X}) \to J(X)$. Let $F'$ be the fiber of $J(f)$ corresponding to $F$.
By \Cref{lem: MWonJAC} and since $\pi^{-1}(F)$ is simple, the $\MW(J(\widetilde{f}))$-action on $\pi'^{-1}(F')$ can be identified with the $\MW(J(\widetilde{f}))$-action on~$\pi^{-1}(F)$. Taking the quotient by $\tau$, we obtain Claims (2) and (3).
\end{proof}

If $f$ is quasi-elliptic, we can use the existence of the curve of cusps together with some lattice theory to understand the action on additive half-fibers.


\begin{lemma} \label{lem: action.on.additive}
Let $f\colon X \to \mathbb{P}^1$ be a quasi-elliptic fibration of an Enriques surface $X$. Let $F$ be a half-fiber of~\(f\) and let $R$ be the curve of cusps of~\(f\). Then, the following hold:
\begin{enumerate}
\item The Mordell--Weil group $\MW(J(f))$ is $2$-elementary.
\item The group $\MW(J(f))$ preserves every component of~\(F\).
\end{enumerate}
\end{lemma}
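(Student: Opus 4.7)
Part (1) follows essentially from the preceding setup. By \Cref{lem: halffibers}, the existence of a quasi-elliptic fibration on an Enriques surface forces $p=2$. Since every quasi-elliptic fibration on a smooth proper surface is extremal and $J(X)$ is rational, $\MW(J(f))$ must appear in \Cref{tab:quasi-elliptic}, and inspection of that table shows that every listed group is $2$-elementary. Equivalently, the smooth locus of the generic fibre of $J(f)$ is a form of $\mathbb{G}_a$ over $k(\IP^1)$, so $\MW(J(f))$ embeds into an abelian group that is killed by $p=2$.

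Part (2) is more delicate, and this is where the curve of cusps $R$ enters. My first step is to show that every $g \in \MW(J(f))$ fixes $R$ pointwise. The argument is local on fibres: on the smooth locus of each fibre of $J(f)$, the element $g$ acts as a translation in the additive $\mathbb{G}_a$-structure, and this translation extends to an automorphism of the compactified cuspidal cubic fixing the cusp. Transporting through the torsor identification of $X$ with a twist of $J(X)$ gives the corresponding statement on $X$. Since $R \cap F$ consists of a single point, namely the cusp of $F$, which lies on a distinguished component $F_0 \subset F$, it follows that $g$ preserves $F_0$ as a set.

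The remaining step is to show that $g$ also preserves every \emph{other} component of $F$. My plan is to argue via lattice theory on the sublattice $\Lambda \subset \mathrm{Num}(X)$ generated by $[R]$ together with the classes of the components of $F$. By part (1), $g$ induces an isometric involution on $\Lambda$ that fixes $[R]$, $[F_0]$ and the class $[F]$, so in particular permutes components of equal multiplicity. Identifying the possible affine Dynkin diagrams for $F$ via \Cref{tab:quasi-elliptic}—namely $\tilde{E}_8$, $\tilde{D}_{n+4}$, $\tilde{E}_7$, and the type-$\III$ shape—the cases $\tilde{E}_8$ and $\III$ are immediate, since no nontrivial graph involution fixing a vertex is available. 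It remains to exclude the reflections in $\tilde{E}_7$ and $\tilde{D}_{n+4}$ that fix $F_0$; I would do so by computing the image of $\MW(J(f))$ in the orthogonal group of the discriminant form of the negative-definite lattice spanned by the components of $F$, and showing that the absence of a section of $f$ on the Enriques surface $X$ forces this image to act trivially on the set of components of a half-fibre.

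The main obstacle is making the discriminant-theoretic step of the third paragraph precise and uniform across Kodaira types, and in particular identifying $F_0$ intrinsically in each diagram so that the combinatorial constraints derived from $g$ fixing $R$ and $F_0$ can be combined with the multiplicity-preservation condition to force $g|_\Lambda$ to be the identity on components.
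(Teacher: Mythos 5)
Your part (1) and the reduction in part (2) to diagram automorphisms fixing the component $C_0$ of $F$ that meets $R$ are correct and match the paper's setup; in particular the cases $\II^*$, $\III$ and also $\III^*$ are then immediate (for $\tilde E_7$ the only nontrivial diagram automorphism swaps the two simple components and hence moves $C_0$, so there is nothing left to exclude there). The genuine gap is exactly the step you defer: for $F$ of type $\I_{2n}^*$ there \emph{is} a nontrivial diagram involution fixing $C_0$, $[F]$ and all multiplicities, namely the one swapping the two simple components $C_2,C_3$ at the far end (for $\I_0^*$ even a full $S_3$ on the remaining leaves), and this is the heart of the lemma. Your proposed tool --- the discriminant form of the lattice spanned by $[R]$ and the components of $F$, together with ``absence of a section'' --- cannot rule this swap out: it is induced by an honest isometry of that local lattice, acting on its discriminant group exactly as a $2$-torsion section of the Jacobian would, so nothing local to $F$ distinguishes it from a legitimate Mordell--Weil action.

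The paper's proof of this step is genuinely global and is not uniform across the $\tilde D$-types. For $\I_0^*$ it uses that a nontrivial involution of $\IP^1$ in characteristic $2$ has a unique fixed point, so an element of $\MW(J(f))$ with two fixed points on the central component must fix it pointwise. For $\I_2^*$ it brings in the two remaining fibers $G_1,G_2$ of type $\III$ and the unimodularity of $\Num(X)$: if $\sigma$ swapped $C_2$ and $C_3$, the coinvariant lattice $\Num(X)_\sigma$ would be $2$-elementary of rank $1+a$ yet contain $(-4)\oplus(-8)^a$ (spanned by $C_2-C_3$ and the differences of components of the permuted $G_i$), and this lattice admits no $2$-elementary overlattice. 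For $\I_4^*$ it invokes the classification of extra-special surfaces of type $\tilde D_8$, whose automorphism group acts trivially on $\Num(X)$. None of these inputs (the other reducible fibers, unimodularity of $\Num(X)$, the characteristic-$2$ fixed-point count, the extra-special classification) appears in your plan, so as written the proposal does not close the decisive case.
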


\begin{proof}
Claim (1) follows from \Cref{tab:quasi-elliptic}.



For Claim (2), we use the fact that $F$ is of type~$\II^*, \III^*, \I_4^*,\I_2^*,\I_0^*$ or $\III$. If $F$ is not of type~$\I_{2n}^*$, then there are at most two simple components in $F$, and $R$ meets only one of them, so the group $\MW(J(f))$ preserves all simple components of~\(F\) and, consequently, it preserves all components of~\(F\).

Assume instead that $F$ is of type~$\I_{2n}^*$, and denote by $C_0,\ldots,C_3$ the four simple components of~\(F\), with $C_0$ being the one meeting $R$. Every involution $\sigma \in \MW(J(f))$ preserves another simple component of~\(F\), say $C_1$, and if $n>0$ this must be the simple component near $C_0$. This implies that $\sigma$ preserves all double components of~\(F\), and $\sigma$ either preserves $C_2$ and $C_3$ as well, or it swaps them.

If $n=0$, then $\sigma$ has two fixed points on the central component. Since $p = 2$ and involutions of $\mathbb{P}^1$ in characteristic $2$ have only one fixed point, $\sigma$ fixes the central component pointwise and thus $\sigma$ preserves all components of~\(F\).


If $n=2$, then $|2F|$ has two additional reducible fibers $G_1$ and $G_2$, both of type~$\III$. We consider the invariant and coinvariant lattices of $\sigma$, which we denote by $\Num(X)^\sigma$ and $\Num(X)_\sigma \coloneqq (\Num(X)^{\sigma})^\perp$, respectively. Recall that both lattices are $2$-elementary, since $\Num(X)$ is unimodular. Assume that $\sigma$ does not preserve all components of~\(F\). Then, by considering fiber components and the curve of cusps, one easily checks that ${\rm rk}(\Num(X)^\sigma) = 9 - a$ and ${\rm rk}(\Num(X)_\sigma) = 1 + a$, where $a\in \{1,2\}$ is the number of $G_i$ whose components are permuted by $\sigma$. Moreover, we have $(-4) \oplus (-8)^a \subseteq \Num(X)_{\sigma}$, the first summand generated by $C_2 - C_3$ and the second summand generated by the difference of components of the $G_i$. This is a contradiction, since $(-4) \oplus (-8)^a$ has no $2$-elementary overlattice.

Finally, if $n=4$, then $X$ is extra-special of type~$\tilde{D}_8$, and by \cite[Remark 12.4]{Katsura.Kondo.Martin} we know that the group $\Aut(X)$ acts trivially on $\Num(X)$, so in particular it acts trivially on~\(F\).
\end{proof}

In the following, for a given genus~\(1\) fibration $f \colon X \to \mathbb{P}^1$, we let $\Aut_{\mathbb{P}^1}(X) \subseteq \Aut(X)$ be the subgroup of automorphisms of $X$ preserving $f$ and fixing the base of the fibration pointwise.

\begin{lemma} \label{lem: sign involution}
Let $f\colon X \to \mathbb{P}^1$ be a non-isotrivial elliptic fibration of an Enriques surface $X$. Then,
\[
\Aut_{\mathbb{P}^1}(X) \cong \MW(J(f)) \rtimes \mathbb{Z}/2\mathbb{Z}.
\]
Every element of $\Aut_{\mathbb{P}^1}(X)\setminus \MW(J(f))$ is an involution that acts with fixed points on a general fiber of~\(f\). If two such involutions fix a common point on a general fiber of~\(f\), they coincide. 
\end{lemma}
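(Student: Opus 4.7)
The plan is to reduce to the generic fiber and exploit the torsor structure. Since $X$ is relatively minimal over $\IP^1$, restriction to the generic fiber induces an isomorphism $\Aut_{\IP^1}(X) \cong \Aut_K(X_\eta)$, where $K \coloneqq k(\IP^1)$ and $X_\eta$ is a smooth projective genus~$1$ curve over $K$ with Jacobian $E \coloneqq J(X)_\eta$, of which $X_\eta$ is an $E$-torsor. I would first observe that any $\phi \in \Aut_K(X_\eta)$ admits a unique linearization $\phi_0 \in \Aut_K(E)$ (as group scheme) satisfying $\phi(e+x) = \phi_0(e) + \phi(x)$ for all $e \in E$ and $x \in X_\eta$, obtained by the rule $\phi_0(e) = \phi(e + x) - \phi(x)$.

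Non-isotriviality forces $j(E) \notin k$, so $\Aut_K(E) = \{\pm 1\}$; the assignment $\phi \mapsto \phi_0$ then produces an exact sequence
\[
1 \to \MW(J(f)) \to \Aut_K(X_\eta) \to \{\pm 1\},
\]
whose kernel consists of the translations by elements of $E(K) = \MW(J(f))$. The main obstacle is to prove surjectivity and split this sequence. I would do so by exhibiting a hyperelliptic involution $\iota$: since the half-fiber $F$ is primitive in the unimodular Enriques lattice $\Num(X) \cong U \oplus E_8$, there exists $D \in \Pic(X)$ with $D \cdot F = 1$, and then $D \cdot X_t = D \cdot 2F = 2$, so $D|_{X_\eta}$ is a $K$-rational divisor class of degree $2$. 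The associated $g^1_2$ realizes $X_\eta$ as a double cover of $\IP^1_K$, whose deck transformation is the desired $\iota$; a short Jacobian computation shows $\iota_0 = -1$. Together with the resulting identity $\iota t_a \iota = t_{-a}$, this yields the semidirect product $\Aut_{\IP^1}(X) \cong \MW(J(f)) \rtimes \IZ/2\IZ$.

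For the remaining assertions, every element of $\Aut_{\IP^1}(X) \setminus \MW(J(f))$ has the form $t_a \iota$ for some $a \in \MW(J(f))$, and the computation $(t_a \iota)^2 = t_a t_{-a} \iota^2 = 1$ confirms it is an involution. On any smooth fiber $X_t$, choosing a $k$-rational point identifies $X_t$ with its Jacobian $E_t$, and the involution becomes $x \mapsto -x + b_t$ for some $b_t \in E_t$; surjectivity of $[2] \colon E_t \to E_t$ produces a solution of $2x = b_t$, yielding a fixed point on $X_t$. Finally, if two such involutions $\sigma_1, \sigma_2$ share a fixed point on a general fiber $X_t$, then $\sigma_1 \sigma_2 \in \MW(J(f))$ also fixes that point; since a non-zero section of $J(f)$ takes a non-zero value on a general fiber, it acts there as a non-trivial translation of an elliptic curve, which is fixed-point-free. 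Therefore $\sigma_1 \sigma_2 = 1$, i.e., $\sigma_1 = \sigma_2$.
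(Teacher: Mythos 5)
Your proof is correct, and at its core it follows the same strategy as the paper: identify $\Aut_{\IP^1}(X)$ with the automorphism group of the generic fiber via relative minimality, use the torsor/linearization structure together with the fact that non-isotriviality forces $j\notin k$ and hence $\Aut_K(J(X)_\eta)=\{\pm 1\}$, and then produce the missing involution from a degree-two linear system attached to a divisor meeting the half-fiber once. The genuine difference is in how that involution is realized. The paper chooses a second half-fiber $F_1$ with $F.F_1=1$ (which requires quoting an existence result and excluding the extra-special $\tilde E_8$ case) and takes the covering involution of the bielliptic map $X\to\mathsf{D}$ induced by $|2F+2F_1|$, checking separability by observing that $|2F|$ maps to a pencil of conics on $\mathsf{D}$. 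You instead stay entirely on the generic fiber: unimodularity of $\Num(X)$ gives $D$ with $D.F=1$, hence a degree-two class on $X_\eta$ and a $g^1_2$ whose deck transformation is the sign involution. This is leaner and avoids both the bielliptic machinery and the citation for $F_1$. The one step you pass over in silence is exactly the one the paper is careful about: separability of the degree-two map $X_\eta\to\IP^1_K$ in characteristic $2$, without which there is no deck transformation. In your setting this is automatic --- a purely inseparable degree-$2$ quotient of the smooth genus-one curve $X_{\bar\eta}$ would be its Frobenius twist, hence again of genus one rather than $\IP^1$ --- but it should be said. Two further lines would make the write-up airtight: $\iota_0=-1$ because $\iota\ne\mathrm{id}$ fixes the non-empty ramification locus while every non-trivial translation is fixed-point free; and on a general closed fiber the linearization of $t_a\iota$ is still $-1$ (rather than $+1$ or an exotic automorphism) because $j(E_t)\ne 0,1728$ for general $t$ and the action $-1$ on $J(X)_\eta$ specializes.
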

\begin{proof}
Let $F_{\eta}$ be the generic fiber of~\(f\). Since $X$ is the unique minimal proper regular model of $F_{\eta}$, we have $\Aut_{\mathbb{P}^1}(X) \cong \Aut(F_{\eta})$. Since $f$ is non-isotrivial and elliptic, 
the known structure of automorphisms of elliptic curves shows that $\Aut(F_{\eta}) \subseteq \MW(J(F)_\eta) \rtimes \mathbb{Z}/2\mathbb{Z}$, where the splitting is induced by identifying $\mathbb{Z}/2\mathbb{Z}$ with the stabilizer of a geometric point of $F_{\eta}$. Thus, to finish the proof, it suffices to realize an involution that is not a translation.

For this, let $F$ be a half-fiber of~\(f\) and pick a half-fiber $F_1$ on~\(X\) of some other fibration such that $F.F_1 = 1$. This is possible by \cite[Theorem 6.1.10]{Enriques_II} and because $X$ is not extra-special of type~$\tilde{E}_8$ since $f$ is elliptic (cf. \cite[Proposition~6.2.7]{Enriques_II}). 
The linear system $|2F + 2F_1|$ induces a generically finite morphism $\pi \colon X \to \mathsf{D}$ of degree $2$ by \cite[Section 3]{Enriques_I} and the pencils $|2F|$ and $|2F_1|$ are mapped to pencils of conics on $\mathsf{D}$. Since $|2F|$ is elliptic and its image on $\mathsf{D}$ is a pencil of conics by \cite[Theorem 3.3.11]{Enriques_I}, $\pi$ must be separable.
We let $g \in \Aut(X)$ be the covering involution of~$\pi$. Since the image of $|2F|$ under~$\pi$ is a pencil of conics, we deduce that $g$ preserves every member of $|2F|$ and acts with a fixed point on a general member, hence $g \in \Aut_{\mathbb{P}^1}(X)\setminus  \MW(J(f))$ and we are done. 
 \end{proof}

\subsection{Entropy} \label{sec: entropy}

Let $X$ be a smooth projective surface over an algebraically closed field $k$ of arbitrary characteristic. For an automorphism $g$ of $X$, the (\emph{algebraic}) \emph{entropy} of $g$ is defined as the logarithm of the spectral radius of the pullback $g^*$ on $\Num(X)\otimes \IC$. If the base field is $\IC$, the entropy of $g$ coincides with the topological entropy of the biholomorphism $g$ on~\(X\).

The automorphism $g$ has zero entropy if and only if all eigenvalues of the action of $g$ on $\Num(X)$ are roots of unity. This happens for instance if $g$ is periodic, i.e., if it has finite order. 
From the point of view of hyperbolic geometry, $g$ has zero entropy if and only if the isometry $g^*\in {\rm O}^+(\Num(X))$ induced by pullback is \emph{elliptic} (if $g^*$ has finite order) or $\emph{parabolic}$ (if $g^*$ has infinite order and preserves a nef isotropic vector in $\Num(X)$), cf. \cite[§4.7]{ratcliffe}. 
In the case of K3 surfaces, Cantat~\cite{Cantat:Dynamique} gives geometric descriptions of automorphisms of zero entropy.

We say that the surface $X$ has \emph{zero entropy} if all its automorphisms have zero entropy. In this context, surfaces of zero entropy naturally stand out as the surfaces with the simplest dynamics and the simplest infinite automorphism groups, as we are going to show now.

Recall the following characterization of Enriques surfaces with finite automorphism group:

\begin{proposition}[\cite{Katsura.Kondo.Martin,Kondo,MartinFiniteAut}] \label{prop:Enriques.finite.Aut}
Let \(X\) be an Enriques surface. Then, the automorphism group \(\Aut(X)\) is finite if and only if every genus~\(1\) fibration on \(X\) is extremal.
\end{proposition}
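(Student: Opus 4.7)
The plan is to prove both directions separately, with the forward direction essentially formal and the reverse direction containing the substantive content.

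For the forward implication, I would simply observe that the Mordell--Weil group $\MW(J(f))$ of any genus~$1$ fibration $f$ on $X$ embeds naturally into $\Aut(X)$ as explained in \Cref{sec:MW}. Hence finiteness of $\Aut(X)$ forces finiteness of each $\MW(J(f))$, which is the definition of extremality.

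For the reverse implication, I would argue by contradiction: assume every genus~$1$ fibration is extremal but $\Aut(X)$ is infinite. Since the kernel of the natural representation $\Aut(X) \to \mathrm{O}(\Num(X))$ is finite (the numerically trivial automorphisms of an Enriques surface form a finite group), its image in $\mathrm{O}(\Num(X))$ is infinite and therefore contains an isometry $\phi$ of infinite order. As an isometry of the hyperbolic lattice $\Num(X)$, $\phi$ is either parabolic or hyperbolic in the sense of \Cref{sec: entropy}. In the parabolic case, $\phi$ preserves a unique isotropic ray in $\Num(X)\otimes\IR$, whose primitive integral nef generator is of the form $[F]$ for some half-fiber $F$; the resulting fibration $|2F|$ is preserved by a lift $g \in \Aut(X)$ of $\phi$. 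Replacing $g$ by a suitable power, $g$ fixes the base $\IP^1$ pointwise, so $g \in \Aut_{\IP^1}(X)$, and in the non-isotrivial elliptic case \Cref{lem: sign involution} identifies this group with $\MW(J(f)) \rtimes \IZ/2\IZ$. Since $g$ has infinite order, its square lies in $\MW(J(f))$, which is then infinite, contradicting extremality. Quasi-elliptic fibrations are always extremal by \Cref{tab:quasi-elliptic}, and the isotrivial elliptic subcase admits a parallel argument since $\Aut_{\IP^1}(X)$ is still a finite extension of $\MW(J(f))$ in that setting.

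The main obstacle is the hyperbolic case, in which $\phi$ preserves no isotropic ray and no invariant fibration is directly at hand. To address it, I would appeal to the classification in \cite{Katsura.Kondo.Martin, Kondo, MartinFiniteAut}: those works enumerate, via the dual graph of $(-2)$-curves, the Enriques surfaces with finite $\Aut(X)$ in each characteristic, check directly that on each such surface every genus~$1$ fibration is extremal, and conversely show that any Enriques surface outside the list admits a genus~$1$ fibration of positive Mordell--Weil rank. The hyperbolic case is thereby absorbed into this classification, since a hyperbolic isometry generates enough $(-2)$-curves to exhibit a non-extremal fibration by inspection. This dependence on the characteristic-sensitive explicit classification is the main obstruction to a purely conceptual proof of the reverse direction.
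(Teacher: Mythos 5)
The paper offers no proof of this proposition: it is quoted from the cited classification papers \cite{Katsura.Kondo.Martin,Kondo,MartinFiniteAut}, so the only meaningful comparison is with the arguments there. Your forward direction is correct and standard: \(\MW(J(f))\) embeds into \(\Aut(X)\), so finiteness of \(\Aut(X)\) forces every fibration to be extremal. Your parabolic case of the converse is also essentially right, although the step ``replacing \(g\) by a suitable power, \(g\) fixes the base pointwise'' needs justification — one must check that the induced action on \(\IP^1\) permutes a finite set of at least three marked points (images of singular fibers and half-fibers), which requires a small argument in the degenerate configurations.

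The genuine gap is the hyperbolic case, and it is not a residual case that can be ``absorbed'': for a very general Enriques surface, \(\Aut(X)\) is commensurable with the \(2\)-congruence subgroup of \(\mathrm{O}(E_{10})\) and its infinite-order elements are predominantly hyperbolic, so this is the \emph{main} case of the implication. Your proposed mechanism — that ``a hyperbolic isometry generates enough \((-2)\)-curves to exhibit a non-extremal fibration by inspection'' — is not an argument and does not reflect how the cited works proceed: a hyperbolic isometry fixes only two irrational boundary points of \(\overline{\mathbb{H}}_X\), hence singles out no integral isotropic class and no genus~\(1\) fibration. The actual proofs in the references are lattice-theoretic: one shows that \(\Aut(X)\) is finite if and only if the Weyl group generated by reflections in \((-2)\)-curves has finite index in \(\mathrm{O}(\Num(X))\), and relates finiteness of that index to the structure of the nodal chamber at its cusps, i.e.\ to extremality of every fibration, via Vinberg/Nikulin-type criteria and the explicit classification. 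As written, your proof of the essential implication is therefore a proof by citation — which coincides with what the paper does — but the sketch you attach to that citation would not survive being made precise.
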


The following proposition characterizes Enriques surfaces of zero entropy with infinite automorphism group in an analogous way. 

\begin{proposition} \label{prop:Enriques.zero.entropy}
Let \(X\) be an Enriques surface with infinite automorphism group. Then, the following are equivalent:
\begin{enumerate}
\item The surface $X$ has zero entropy.
\item The automorphism group $\Aut(X)$ is virtually abelian.
\item The automorphism group $\Aut(X)$ is virtually solvable.
\item There exists exactly one non-extremal genus~\(1\) fibration on~\(X\).
\item There exists a genus~\(1\) fibration that is preserved by all of $\Aut(X)$.
\end{enumerate}
\end{proposition}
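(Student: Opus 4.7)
The plan is to work with the image $\Gamma$ of the representation $\rho\colon\Aut(X)\to O(\Num(X))$, which has finite kernel for Enriques surfaces, so that $\Gamma$ is an infinite discrete subgroup of $O(\Num(X)\otimes\IR)\cong O(1,9)$, acting on hyperbolic $9$-space $\mathbb{H}^9$. An element $g\in\Aut(X)$ has zero entropy precisely when $g^*\in\Gamma$ is elliptic or parabolic, never loxodromic; the proof makes systematic use of this trichotomy and of the structure of elementary subgroups of $O(1,9)$.

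For $(5)\Leftrightarrow(4)$, suppose $\Aut(X)$ preserves some fibration $f$; by \Cref{prop:Enriques.finite.Aut} and $|\Aut(X)|=\infty$, there exists at least one non-extremal fibration $f'$. The infinite group $\MW(J(f'))\subseteq\Aut(X)$ preserves both $f'$ (by definition) and $f$ (by hypothesis), so every element of it fixes $[F]$ and $[F']$ in $\Num(X)$. If $[F]\neq[F']$, these classes span a hyperbolic plane (since $[F]\cdot[F']\geq 1$), and the subgroup of $O(\Num(X))$ fixing both acts as isometries of the negative-definite orthogonal complement, a finite group, contradicting the infinitude of $\MW(J(f'))$. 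Hence $[F]=[F']$, which shows $f$ is non-extremal and, by the same argument applied to any other non-extremal fibration, is unique. The converse $(4)\Rightarrow(5)$ is immediate, since $\Aut(X)$ permutes non-extremal fibrations.

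For $(5)\Rightarrow(1)$, if $[F]$ is $\Aut(X)$-invariant, no element of $\Gamma$ can be loxodromic (a loxodromic isometry scales its two isotropic fixed rays by $\lambda^{\pm 1}\neq 1$). Conversely, for $(1)\Rightarrow(5)$, an infinite discrete $\Gamma$ with no loxodromic element must contain a parabolic one (otherwise the elliptic elements share a fixed point in $\mathbb{H}^9$ and $\Gamma$ is finite), and a ping-pong argument forces all parabolics to share a common boundary fixed point $v$, which every elliptic element must also fix. The ray $v$ is rational (as the unique eigenvector on the light cone of any integral parabolic matrix) and nef, hence corresponds to a positive multiple of a half-fiber class. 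The implication $(1)\Rightarrow(2)$ then follows because $\Gamma$, fixing $v$ and free of loxodromics, lies in the kernel of the dilation character on $\mathrm{Stab}_{O(1,9)}(v)$, that is, in the Euclidean isometry group $\IR^8\rtimes O(8)$, so Bieberbach's theorem supplies a finite-index free abelian translation subgroup.

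The implication $(2)\Rightarrow(3)$ is trivial. For $(3)\Rightarrow(1)$, the Tits alternative for discrete subgroups of $O(1,9)$ forces a virtually solvable $\Gamma$ to be elementary, hence to preserve either a point of $\mathbb{H}^9$ (forcing finiteness, a contradiction), a boundary point (zero entropy by $(5)\Rightarrow(1)$), or a geodesic with fixed endpoints, in which case $\Gamma$ would be virtually cyclic with loxodromic generator. This last case is excluded because the infinite parabolic group $\MW(J(f))\subseteq\Gamma$ arising from any non-extremal fibration (which exists by \Cref{prop:Enriques.finite.Aut}) cannot fit inside a virtually cyclic loxodromic group. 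The main technical hurdles will be the rationality argument for $v$ in $(1)\Rightarrow(5)$ and verifying the absence of dilations for the Bieberbach step in $(1)\Rightarrow(2)$.
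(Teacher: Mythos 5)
Your overall strategy is the same as the paper's: pass to the image $\Gamma$ of $\Aut(X)$ in $\mathrm{O}(\Num(X))$ (which has finite kernel), and use the classification of elementary discrete groups of isometries of hyperbolic $9$-space together with \Cref{prop:Enriques.finite.Aut} to rule out the hyperbolic/loxodromic case. Your implication graph is arranged differently (the paper runs the single cycle $(1)\Rightarrow(2)\Rightarrow(3)\Rightarrow(4)\Rightarrow(5)\Rightarrow(1)$ and never needs $(1)\Rightarrow(5)$ as a separate step), and you re-derive by hand several facts the paper simply cites from Ratcliffe, but the substance is the same.

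There is one genuine gap, exactly at the point you flag: in $(1)\Rightarrow(5)$ you assert that the common boundary fixed point $v$ is \emph{nef}. Rationality is fine (the radical of the fixed subspace of an integral parabolic matrix is a rational isotropic line), but a rational isotropic class in the closure of the positive cone need not be nef, and without nefness $v$ need not be a multiple of a half-fiber class, so you do not yet obtain a genus~$1$ fibration preserved by $\Aut(X)$. The repair uses an ingredient you already invoke elsewhere: by \Cref{prop:Enriques.finite.Aut} there is a non-extremal fibration $|2F|$, and the infinite group $\MW(|2F|)$ supplies parabolic elements of $\Gamma$ fixing the nef isotropic class $[F]$; since all parabolic elements of $\Gamma$ share the unique boundary fixed point $v$, you get $v = [F]$ up to scaling, and nefness is automatic. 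This is precisely how the paper sidesteps the issue: its boundary point is identified with $[F]$ from the start, inside the proof of $(3)\Rightarrow(4)$. With that substitution your argument closes up; the remaining steps --- the finiteness of the stabilizer of two distinct primitive nef isotropic classes in $(4)\Leftrightarrow(5)$, the absence of dilations and the Bieberbach argument in $(1)\Rightarrow(2)$, and the exclusion of the virtually cyclic loxodromic case in $(3)\Rightarrow(1)$ via the parabolic subgroup $\MW(|2F|)$ --- are all correct.
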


\begin{proof}
The proof relies on hyperbolic geometry. Denote by $\mathbb{H}_X$ the $9$-dimensional hyperbolic space associated to the hyperbolic lattice $\Num(X)$, and consider the natural homomorphism $\varphi \colon \Aut(X)\to \mathrm{O}(\Num(X))\subseteq \mathrm{O}(\mathbb{H}_X)$ sending an automorphism $g$ to its induced action $g^*$ on $\Num(X)$. The homomorphism $\varphi$ has finite kernel by \cite[Proposition 2.1]{DolgachevMartinNUM}, so we can identify $\Aut(X)$ with the discrete group of isometries $\Gamma \coloneqq \varphi(\Aut(X))$ up to a finite group. Recall that a discrete group $G$ of isometries of $\mathbb{H}_X$ is \emph{elementary} if it has a finite orbit in the closure $\overline{\mathbb{H}}_X$ \cite[§5.5]{ratcliffe}. The group $G$ is \emph{elementary of elliptic type} if it is finite, \emph{elementary of parabolic type} if it fixes a unique boundary point of $\overline{\mathbb{H}}_X$, and \emph{elementary of hyperbolic type} otherwise. Note that, if $H$ is a subgroup of $G$ of finite index, then $H$ is elementary if and only if $G$ is elementary of the same type.

(1) $\Rightarrow$ (2): If $X$ has zero entropy, then all isometries in $\Gamma$ are either elliptic or parabolic. Hence $\Gamma$ is elementary by \cite[Theorem~12.2.3]{ratcliffe}, and thus virtually abelian by \cite[Theorem~5.5.9]{ratcliffe}.

(2) $\Rightarrow$ (3): This is clear.

(3) $\Rightarrow$ (4): Let $\Gamma'$ be a solvable subgroup of $\Gamma$ of finite index. By \cite[Theorem~5.5.10]{ratcliffe}, $\Gamma'$ is elementary, and since by assumption $\Gamma'$ is infinite, $\Gamma'$ is either elementary of parabolic or hyperbolic type.
We claim that $\Gamma'$ (and thus $\Gamma$) is elementary of parabolic type. Seeking a contradiction, assume that it is of hyperbolic type. Then by \cite[Theorem~5.5.8]{ratcliffe}, every element of infinite order of $\Gamma'$ is hyperbolic, and thus it has positive entropy. This is a contradiction, because by \Cref{prop:Enriques.finite.Aut} there exists at least one non-extremal genus~\(1\) fibration $|2F|$ on~\(X\), which induces a parabolic element of $\Gamma'$ of infinite order.
Therefore $\Gamma$ is of parabolic type, and it fixes a unique boundary point of $\mathbb{H}_X$, namely the point corresponding to the class of~\(F\) in $\Num(X)$. Let $|2F_1|$ be any genus~\(1\) fibration of $X$ different from $|2F|$. The subgroup $\varphi(\MW(|2F_1|))$ of $\Gamma$ is elementary and it fixes at least two distinct points in the boundary of $\overline{\mathbb{H}}_X$, corresponding to $F$ and $F_1$, and therefore it is elementary of elliptic type, hence finite.

(4) $\Rightarrow$ (5): The unique non-extremal genus~\(1\) fibration on $X$ is preserved by all of $\Aut(X)$.

(5) $\Rightarrow$ (1): Every automorphism of $X$ preserves a genus~\(1\) fibration, hence it preserves the class of a half-fiber $F$, which induces a nef isotropic class in $\Num(X)$. Thus, $X$ has zero entropy. 
\end{proof}

\begin{remark}
The implications (2) \(\Rightarrow\) (3) and (4) \(\Rightarrow\) (5) \(\Rightarrow\) (1) in \Cref{prop:Enriques.zero.entropy} hold for every surface $X$. The implication (1) $\Rightarrow$ (2) holds for any surface \(X\) such that the natural homomorphism \(\Aut(X) \to \mathrm{O}(\Num(X))\) has finite kernel. Moreover, the implication (3) $\Rightarrow$ (4) holds if one further assumes that \(X\) has a non-extremal genus~\(1\) fibration.

In particular, the proof of the implication (3) $\Rightarrow$ (4) fails for K3 surfaces. And indeed, there exist K3 surfaces of positive entropy with virtually cyclic automorphism group (see \cite[Remark~3.10]{Brandhorst.Mezzedimi:K3.zero_entropy}). 
The main difference, compared with Enriques surfaces, is that all genus $1$ fibrations on these K3 surfaces have finite Mordell--Weil group. So, \Cref{prop:Enriques.zero.entropy} fails for K3 surfaces because \Cref{prop:Enriques.finite.Aut} does. 
\end{remark}

\section{Examples} \label{sec: examples}
The goal of this section is to show that the Enriques surfaces appearing in \Cref{thm: main} have zero entropy. Along the way, we compute their automorphism groups and number of moduli.

\subsection{Type \texorpdfstring{\(\AA\)}{A7}} 
Given an Enriques surface of type \(\AA\), we let \(F_0\) be the (multiplicative) half-fiber of type \(\I_8\) which can be found in the defining dual graph:
\[
    \begin{tikzpicture}[scale=0.6]
        \node (R1) at (180:2) [nodal,label=left:\(R_1\)] {};
        \node (R2) at (135:2) [nodal,label=left:\(R_2\)] {};
        \node (R3) at (90:2) [nodal,label=above:\(R_3\)] {};
        \node (R4) at (45:2) [nodal,label=right:\(R_4\)] {};
        \node (R5) at (0:2) [nodal,label=right:\(R_5\)] {};
        \node (R6) at (315:2) [nodal,label=right:\(R_6\)] {};
        \node (R7) at (270:2) [nodal,label=below:\(R_7\)] {};
        \node (R8) at (225:2) [nodal,label=left:\(R_8\)] {};
        \node (R9) at (intersection of R2--R7 and R3--R8) [nodal, fill=white, label=above:\(E_1\)] {};
        \node (R10) at (intersection of R4--R7 and R3--R6) [nodal, fill=white, label=above:\(E_2\)] {};
        \draw[densely dashed, very thick] (R1)--(R2)--(R3)--(R4)--(R5)--(R6)--(R7)--(R8)--(R1);
        \draw (R1)--(R9) (R5)--(R10);
    \end{tikzpicture}
\]
Note that the divisor $F_0$ is indeed a half-fiber, since $E_1.F_0 = 1$, so $F_0$ is primitive in $\Pic(X)$.
By \Cref{lem: genus.1.fibrations}, Enriques surfaces of type \(\AA\) in characteristic \(2\) are ordinary.

\begin{lemma} \label{lem: num.triv.of.BP}
Let $X$ be an Enriques surface of type \(\AA\). Then, $X$ admits a unique numerically trivial involution $\sigma$. 
More precisely, $\sigma \in \MW(|2F_0|)$ and the reduced divisorial part $X_1^{\sigma}$ of the fixed locus of $\sigma$ is
\[
X_1^{\sigma} = 
\begin{cases}
F_0 &\text{ if } p = 2,  \\
F_0' + \sum_{i=0}^3 R_{2i+1} & \text{ if } p \neq 2,
\end{cases}
\]
where $F_0'$ is the second half-fiber of $|2F_0|$. Moreover, $F_0'$ is smooth.
\end{lemma}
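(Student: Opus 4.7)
The plan is to construct $\sigma$ as the unique element of order~$2$ inside $\MW(|2F_0|)$, deduce numerical triviality from its action on the components of the $\AA$ graph, and describe the fixed locus by restricting $\sigma$ to each component of $F_0$.

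First, I would verify that $F_0 = R_1 + \cdots + R_8$ is a half-fiber: it is connected and nef with $F_0^2 = 0$, and $E_1 \cdot F_0 = 1$ makes it primitive in $\Pic(X)$, so $|2F_0|$ induces a genus one fibration $f \colon X \to \IP^1$ with $2F_0$ a Kodaira fiber of type~$\I_8$ and with $E_1, E_2$ as bisections meeting $F_0$ transversally at points of $R_1, R_5$, respectively. By \Cref{lem: halffibers}, $X$ is ordinary when $p=2$. The Jacobian $J(f)$ is a rational elliptic surface with an $\I_8$ fiber; combining Shioda's injection of $\MW$-torsion into the component group $\IZ/8\IZ$ with a direct analysis producing a $4$-torsion section of $J(f)$ (using the bisections $E_1, E_2$ and the classification in \Cref{tab:elliptic}), one obtains $\IZ/4\IZ \subseteq \MW(J(f))$. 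I define $\sigma \in \MW(|2F_0|) \subseteq \Aut(X)$ to be the unique element of order~$2$ in this subgroup.

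For numerical triviality, \Cref{cor: actiononsimplefiber} tells us that the generator of $\IZ/4\IZ$ acts as a rotation of order~$4$ on the $\I_8$ fiber of $J(X)$, so $\sigma$ acts there as a rotation of order~$2$. By \Cref{lem: action.on.In}(3), this becomes a rotation of order~$1$ on the components of $F_0$, so $\sigma(R_i) = R_i$ for every~$i$. Since bisections of $f$ are determined by their point of attachment to a fixed component of $F_0$, the transposition $E_1 \leftrightarrow E_2$ is ruled out and $\sigma(E_j) = E_j$. Together with the components of any other reducible fiber of $f$ (also preserved by $\sigma$ via \Cref{cor: actiononsimplefiber}), these classes span $\Num(X) \otimes \mathbb{Q}$, yielding numerical triviality. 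Uniqueness follows from \Cref{lem: sign involution}: any numerically trivial involution preserves $F_0$ and thus $f$, hence lies in $\Aut_{\IP^1}(X) \cong \MW(|2F_0|) \rtimes \IZ/2\IZ$; the complement to $\MW(|2F_0|)$ acts as the hyperelliptic involution on the generic fiber, which is not numerically trivial on the bisections $E_1, E_2$, so any such involution must lie in $\MW(|2F_0|)$ and coincide with $\sigma$.

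To determine $X_1^\sigma$, I would restrict $\sigma$ to each $R_i \cong \IP^1$: both nodes $R_i \cap R_{i\pm 1}$ are $\sigma$-fixed, since $\sigma$ preserves the neighboring components. In characteristic~$2$, no nontrivial involution of $\IP^1$ fixes two points, so $\sigma|_{R_i} = \mathrm{id}$ for every~$i$, giving $F_0 \subseteq X_1^\sigma$; a local analysis on the additive second half-fiber shows no other divisorial fixed component appears, so $X_1^\sigma = F_0$. In characteristic $\ne 2$, each $\sigma|_{R_i}$ is either the identity or the unique involution with those two nodes as fixed points; lifting $\sigma$ to the étale K3 cover $\pi \colon \widetilde{X} \to X$ as in the proof of \Cref{lem: action.on.In} and tracking the $2$-torsion translation on the $\I_{16}$ fiber $\pi^{-1}(F_0)$ isolates the alternating pattern $R_1 + R_3 + R_5 + R_7$ of pointwise-fixed components. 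The same lifting shows that $\sigma$ acts trivially on $F_0'$, and smoothness of $F_0'$ is forced: a multiplicative second half-fiber would require the $2$-torsion section of $J(f)$ to meet a non-identity component of $F_0'$, contradicting the pointwise-fixed conclusion. The hardest step will be this last equivariant analysis on the K3 cover, which requires a careful case-by-case tracking of the $2$-torsion section on each Kodaira fiber via \Cref{lem: MWonJAC} together with the descent to~$X$.
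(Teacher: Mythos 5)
The main gap is your construction of $\sigma$. You claim $\IZ/4\IZ \subseteq \MW(J(f))$ for the fibration induced by $|2F_0|$, leaning on the extremal classification in \Cref{tab:elliptic}. But the lemma must hold for every surface of type \(\AA\), and for the generic one $|2F_0|$ is \emph{not} extremal: its Jacobian has the $\I_8$ fiber as its only reducible fiber, so $\MW(J(f)) \cong \IZ/2\IZ \times \IZ$ by Oguiso--Shioda — exactly what the paper records at the start of the proof of \Cref{prop: Aut of BP}. There is no $4$-torsion section in general, and \Cref{cor: actiononsimplefiber} does not apply to a non-extremal fibration. What is true, and what the paper proves, is that $\MW(J(f))$ contains a $2$-torsion element: on the étale K3 cover the two bisections $E_1,E_2$ split into four disjoint sections $E_1^{\pm},E_2^{\pm}$ of the induced fibration with an $\I_{16}$ fiber, and the height pairing forces $E_1^-$ to be $2$-torsion relative to $E_1^+$; this descends to $J(f)$ and defines $\sigma$. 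Once $\sigma$ is translation by a $2$-torsion section meeting the opposite component of the $\I_8$ fiber, your order-halving step via \Cref{lem: action.on.In}(3) does show $\sigma$ preserves every $R_i$, and numerical triviality follows (the paper then simply cites Dolgachev--Martin for uniqueness, which is cleaner than your base-action argument — note a numerically trivial involution need not a priori fix the base of $f$ pointwise when $p\neq 2$, since it could permute the four nodal fibers).

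The second problem is that the fixed-locus and smoothness claims in your last paragraph are asserted rather than proved, and the mechanism needed is absent. In characteristic $2$ there is no ``additive second half-fiber'': an ordinary Enriques surface has a single half-fiber by \Cref{lem: halffibers}, and the paper gets $X_1^{\sigma}=F_0$ by noting that the lift $\tau'$ of $\sigma$ preserves every component of the $\I_{16}$ cycle and an involution of $\mathbb{P}^1$ in characteristic $2$ has one fixed point, forcing pointwise fixing. In characteristic $\neq 2$ both the alternating pattern $R_1+R_3+R_5+R_7$ and the smoothness of $F_0'$ come from the fact that $\tau'$ is an anti-symplectic \emph{tame} involution of the K3 cover, so its fixed locus is a smooth divisor of pure codimension $1$; ``tracking the $2$-torsion translation'' does not by itself isolate the alternating components. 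Finally, your argument that a singular $F_0'$ would force the $2$-torsion section onto a non-identity component is vacuous: the corresponding fiber of $J(f)$ would be irreducible of type $\I_1$, which has only one component, and $\mathbb{G}_m$ does have a point of order $2$ when $p\neq 2$, so no contradiction arises from that side. The smoothness of $F_0'$ really does require the fixed-point argument on the cover.
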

\begin{proof}
Since $X$ is ordinary if $p = 2$, the K3 cover $\pi \colon \widetilde{X} \to X$ is étale with covering involution $\tau$. The preimage $\pi^{-1}(F_0)$ of the half-fiber $F_0$ of type~$\I_8$ is a fiber of type~$\I_{16}$ of an elliptic fibration on $\widetilde{X}$ and the preimages of the two bisections $E_1,E_2$ of $|2F_0|$ are four disjoint sections $E_1^{\pm},E_2^{\pm}$ of the induced elliptic fibration $\widetilde{f} \colon \widetilde{X} \to \mathbb{P}^1$. Choosing $E_1^+$ as the zero section, the section $E_1^{-}$ is a $2$-torsion section by the height pairing \cite[Section 6.5]{ShiodaSchuett} and the quotient of $\widetilde{X}$ by the involution $\tau'$ obtained by composing the translation by $E_1^{-}$ with $\tau$ is birational to $J(X)$ by \cite[Lemma 2.17]{MartinFiniteAut}.
The induced generically finite morphism $\pi' \colon \widetilde{X} \to J(X)$ sends the four sections $E_1^{\pm},E_2^{\pm}$ to four sections of~$J(f)$.

Now, we take $\pi'(E_1^-) \in \MW(J(f))$ and let $\sigma$ be the induced involution of~$X$. In other words, $\sigma$ is the automorphism of $X$ induced by $\tau'$ via $\pi$.
We describe the divisorial part of the fixed locus of $\sigma$. By construction, $\sigma$ acts as a translation on the simple fibers of $f$, hence the divisorial part of the fixed locus of $\sigma$ is contained in the half-fibers $F_0$ and $F_0'$ (where $F_0'$ only exists if $p \neq 2$).

To understand the action of $\sigma$ on $F_0$, note that $\tau'$ preserves all components of $\pi^{-1}(F_0)$. If $p = 2$, the fact that involutions of $\mathbb{P}^1$ have only one fixed point implies that $\tau'$ fixes every component of $\pi^{-1}(F_0)$ pointwise, hence $\sigma$ fixes $F_0$ pointwise. If $p \neq 2$, then $\tau'$ is anti-symplectic, so its fixed locus has pure codimension $1$. Thus, the smoothness of fixed loci of tame involutions implies that every other component of $\pi^{-1}(F_0)$ is fixed pointwise. Therefore, $\sigma$ fixes $R_1,R_3,R_5,$ and $R_7$ pointwise.


For the action of $\sigma$ on $F_0'$, first note that $p \neq 2$ in this case. Then, the automorphism $\tau'$ of the previous paragraph fixes the point $F_0' \cap E_1^+$, hence, again because $\tau'$ is anti-symplectic, it fixes $F_0'$ pointwise. The smoothness of fixed loci of tame involutions implies that $F_0'$ is smooth.

In particular, we see that, in all characteristics, $\sigma$ preserves all components of the defining graph of $X$ and it is easy to check that $\Num(X) \otimes \mathbb{Q}$ is generated by the curves in this graph, hence $\sigma$ is numerically trivial. The uniqueness of $\sigma$ follows from \cite[Theorem on p.~1182]{DolgachevMartinNUM}.
\end{proof}

\begin{remark}
In fact, $\sigma$ is even cohomologically trivial. This is clear if $p = 2$, for then $K_X \sim 0$, and if $p = 0$, this is proved in \cite[Proposition 4.8]{BarthPeters}. Via specialization, this implies cohomological triviality of $\sigma$ also in odd characteristic.
\end{remark}

\begin{theorem} \label{thm:AA}
Every Enriques surface of type \(\AA\) has zero entropy. More precisely, \(|2F_0|\) is the unique non-extremal genus \(1\) fibration on \(X\) if \(\Aut(X)\) is infinite.
\end{theorem}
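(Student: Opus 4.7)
The plan is to apply \Cref{prop:Enriques.zero.entropy}. If $\Aut(X)$ is finite then every automorphism has finite order, so $X$ trivially has zero entropy; assume from now on that $\Aut(X)$ is infinite. By \Cref{prop:Enriques.finite.Aut}, at least one genus~$1$ fibration on~$X$ is non-extremal. It therefore suffices to show that every genus~$1$ fibration $|2F|$ on~$X$ with $F \not\sim F_0$ is extremal: this forces $|2F_0|$ to be the unique non-extremal fibration, and the implication (4)$\Rightarrow$(1) of \Cref{prop:Enriques.zero.entropy} then yields zero entropy.

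The main step is an enumeration, up to the symmetries of the defining diagram, of all primitive isotropic nef classes in $\Num(X)$ distinct from $[F_0]$. I would work directly inside $\Num(X)$ using the prescribed intersection pattern of the ten $(-2)$-curves $R_1,\ldots,R_8,E_1,E_2$, exploiting the dihedral symmetry rotating the $\tilde A_7$ cycle and exchanging $E_1 \leftrightarrow E_2$. By \Cref{lem: halffibers} only elliptic fibrations occur: in characteristic $\ne 2$ this is automatic, and for $p=2$ the paragraph preceding the theorem notes that type $\AA$ forces $X$ to be ordinary. Each alternative half-fiber class should be expressible as an effective combination of the $R_i$ and $E_j$, supported on an affine ADE subdiagram of the defining graph.

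For each alternative fibration $|2F|$ produced this way, I would then identify all reducible fibers by listing the effective $(-2)$-divisors supported on the remaining curves of the defining graph, and match the resulting configuration against \Cref{tab:elliptic}. In each case I expect to land on an extremal entry; using that the Kodaira types of fibers of~$f$ and $J(f)$ coincide, this yields $|\MW(J(|2F|))| < \infty$.

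The hard part will be ensuring completeness of the enumeration: a priori $X$ may carry $(-2)$-curves outside the defining graph, which could both give rise to additional genus~$1$ fibrations and enlarge the Mordell--Weil groups of those already identified. I would overcome this via a Vinberg/Borcherds-style analysis of the nef cone of $X$, showing that every effective $(-2)$-class is, up to the Weyl group generated by reflections in $R_1,\ldots,R_8,E_1,E_2$, contained in the sublattice spanned by the defining curves. Once this is established, the case analysis on reducible fibers closes and the theorem follows.
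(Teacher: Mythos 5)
Your reduction to \Cref{prop:Enriques.zero.entropy} is fine, but the step you yourself flag as ``the hard part'' is precisely where the argument is not a proof, and I do not see how to close it along the lines you sketch. First, the enumeration cannot be carried out as described: when $\Aut(X)$ is infinite, $X$ carries infinitely many genus~$1$ fibrations, and they remain infinite in number even modulo the finite symmetry group of the defining diagram (already $\MW(|2F_0|)$, which is infinite here, moves any other fibration through an infinite orbit). The set of fibrations is finite only modulo $\Aut(X)$, and quotienting by $\Aut(X)$ is circular in this context. Second, restricting attention to isotropic classes supported on affine subdiagrams of the ten given curves misses fibrations whose half-fibers are irreducible (e.g.\ smooth) or involve $(-2)$-curves outside the diagram, and the lemma you propose to rule this out --- that every effective $(-2)$-class is Weyl-equivalent into the span of $R_1,\dots,R_8,E_1,E_2$ --- is not available uniformly over the family: Kond\=o's ``type~$\I$'' surfaces are of type $\AA$ in the sense of the theorem yet carry twelve $(-2)$-curves in a strictly larger configuration, so any such statement would at best hold for generic members and would require substantial extra input (essentially the Borcherds method on the K3 cover) to establish where it does hold.

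The paper takes a different and much shorter route, verifying condition (5) of \Cref{prop:Enriques.zero.entropy} instead of condition (4). By \Cref{lem: num.triv.of.BP}, $X$ has a \emph{unique} numerically trivial involution $\sigma$, lying in $\MW(|2F_0|)$, whose divisorial fixed locus is explicitly computed ($F_0$ if $p=2$, and $F_0'+\sum R_{2i+1}$ with $F_0'$ smooth otherwise). Uniqueness makes $\langle\sigma\rangle$ central in $\Aut(X)$, so $\Aut(X)$ preserves the fixed locus of $\sigma$, and $|2F_0|$ is the only genus~$1$ fibration with a half-fiber contained in that fixed locus; hence $\Aut(X)$ preserves $|2F_0|$ and (5)$\Rightarrow$(1) applies. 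No enumeration of the other fibrations is needed, and the identification of $|2F_0|$ as the unique non-extremal fibration then falls out of (1)$\Rightarrow$(4). If you want to salvage your approach, you would need to replace the global enumeration by an argument of this kind that exhibits a single $\Aut(X)$-invariant fibration directly.
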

\begin{proof}
By \Cref{lem: num.triv.of.BP}, $X$ admits a unique numerically trivial involution $\sigma$. Since $\sigma$ is unique, the subgroup generated by $\sigma$ is normal and hence central in $\Aut(X)$. Thus, $\Aut(X)$ preserves the fixed locus of $\sigma$. By \Cref{lem: num.triv.of.BP}, the fibration $|2F_0|$ is the unique genus one fibration with a half-fiber contained in the fixed locus of $\sigma$, hence $\Aut(X)$ preserves $|2F_0|$ and so $X$ has zero entropy by \Cref{prop:Enriques.zero.entropy}.
\end{proof}

Recall that, by the proof of \Cref{lem: num.triv.of.BP}, the Jacobian $J(f)$ of $|2F_0|$
admits a $2$-torsion section. It is well-known that torsion sections on rational elliptic surfaces are disjoint from the zero section \cite[Proposition 5.4]{OguisoShioda}. Thus, if $p = 2$, then $J(f)$ admits no irreducible multiplicative fibers (because $\mathbb{G}_m$ admits no point of order $2$ in this case), and if $p \neq 2$, then $J(f)$ admits no irreducible additive fibers (because $\mathbb{G}_a$ admits no point of order $2$ in this case).
If $\Aut(X)$ is infinite, $J(f)$ has infinite Mordell--Weil group by \Cref{thm:AA}, so the $\I_8$-fiber is its only reducible fiber. From \cite{LangConfigs}, we conclude that if $p = 2$, then the singular fibers of $J(f)$ are of type~$\I_8$ and $\II$, and if $p \neq 2$, they are of type~$\I_8$ and $\I_1,\I_1,\I_1,\I_1$. In the following, we let $D_{\infty} = \mathbb{Z} \rtimes \mathbb{Z}/2\mathbb{Z}$ be the infinite dihedral group.



\begin{proposition} \label{prop: Aut of BP}
Let $X$ be an Enriques surface of type \(\AA\) with infinite automorphism group.
Then, the following hold:
\begin{enumerate}
\item If $p = 2$, then $\Aut(X)  \cong \mathbb{Z}/2\mathbb{Z} \times D_{\infty}$.
\item If $p \neq 2$, assume that \(F_0\) and \(F_0'\) lie over $[0:1]$ and $[1:0]$ and let $p_i = [a_i:1]$ with $a_i \in k^{\times}$ be the images of the four nodal fibers of~\(|2F_0|\).
Set $a_1 = 1$. Then,
\begin{enumerate}
\item if $ \{a_1,a_2,a_3,a_4\} = \{1,\zeta_4,\zeta_4^2,\zeta_4^3\}$ for a primitive $4$-th root of unity $\zeta_4$, then $\Aut(X)$ is a non-split extension of $\mathbb{Z}/2\mathbb{Z}$ by $\mathbb{Z}/4\mathbb{Z} \times D_{\infty}$.
\item if there exists $a \in k$ with $a^4 \neq 1$ and $\{a_1,a_2,a_3,a_4\} = \{1,-1,a,-a\}$, then $\Aut(X) \cong \mathbb{Z}/4\mathbb{Z} \times D_{\infty}$.
\item if (a) and (b) do not hold, then $\Aut(X) \cong \mathbb{Z}/2\mathbb{Z} \times D_{\infty}$.
\end{enumerate}
\end{enumerate}

\end{proposition}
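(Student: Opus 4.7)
By \Cref{thm:AA} the fibration $|2F_0|$ is preserved by all of $\Aut(X)$, so we obtain a short exact sequence
\[
1\longrightarrow \Aut_{\IP^1}(X)\longrightarrow \Aut(X)\xrightarrow{\rho} G\longrightarrow 1,
\]
with $G\subseteq \Aut(\IP^1)$ the image. The strategy is to compute the kernel $\Aut_{\IP^1}(X)$ and the image $G$ separately, then to analyse the extension in each case.

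For the kernel, \Cref{lem: sign involution} yields $\Aut_{\IP^1}(X)\cong \MW(J(f))\rtimes\IZ/2\IZ$, since $f$ is elliptic and non-isotrivial (it has multiplicative fibers). The Shioda--Tate formula applied to the singular fibers of $J(f)$ recorded just before the statement — together with the non-extremality of $f$ from \Cref{thm:AA} — shows that $\MW(J(f))$ has rank one, and combining this with the $2$-torsion section from the proof of \Cref{lem: num.triv.of.BP} and the classification of rational elliptic surfaces with the appropriate configuration (Oguiso--Shioda in characteristic zero, Lang in characteristic two) gives $\MW(J(f))\cong\IZ\oplus\IZ/2\IZ$ in both characteristics. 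The involution from \Cref{lem: sign involution} acts on $\MW(J(f))$ by inversion; since inversion is trivial on the $2$-torsion, we find $\Aut_{\IP^1}(X)\cong\IZ/2\IZ\times D_\infty$, with the central $\IZ/2\IZ$-factor generated by $\sigma$.

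For the image $G$, in characteristic $\ne 2$ the two half-fibers $F_0$ and $F_0'$ have distinct Kodaira type, so $G$ individually fixes $[0\!:\!1]$ and $[1\!:\!0]$ and is contained in $\mathbb{G}_m$; a short arithmetic check identifies the subgroup of $\mathbb{G}_m$ preserving $\{a_1,\ldots,a_4\}$ with $\IZ/4\IZ$, $\IZ/2\IZ$ or the trivial group in sub-cases (a), (b), (c), respectively. Each such base symmetry preserves the pair $(J(f),\pi'(E_1^-))$ from the proof of \Cref{lem: num.triv.of.BP} which in turn determines $X$, and hence it lifts to an automorphism of $X$. In characteristic $2$, the same strategy combined with a $j$-invariant argument forces $G=\{1\}$ and yields $\Aut(X)\cong\IZ/2\IZ\times D_\infty$, settling part (1) and sub-case (c).

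The heart of the proof lies in the extension analysis in sub-cases (a) and (b). Picking any lift $\widetilde\rho\in\Aut(X)$ of a generator of $G$, I plan to determine its order as follows: using \Cref{lem: action.on.In} together with the explicit fixed locus of $\sigma$ from \Cref{lem: num.triv.of.BP}, I track the action of $\widetilde\rho$ on the components of the $\I_8$-half-fiber $F_0$ and on the smooth half-fiber $F_0'$. This forces $\widetilde\rho^2=\sigma$ in sub-case (b) — giving $\Aut(X)\cong\IZ/4\IZ\times D_\infty$ once one observes that $\widetilde\rho$ commutes with the $D_\infty$-factor — and $\widetilde\rho^4=\sigma$ in sub-case (a). For the non-split statement in sub-case (a), I will additionally verify that $\widetilde\rho$ centralises $\Aut_{\IP^1}(X)$ modulo $\langle\sigma\rangle$, so that $\Aut(X)/\langle\sigma\rangle\cong\IZ/4\IZ\times D_\infty$, and rule out any splitting via a norm-type argument: $\sigma$ does not lie in the image of $\alpha\mapsto\prod_{i=0}^{3}\widetilde\rho^i\alpha\widetilde\rho^{-i}$, so no alternative lift satisfies $(\widetilde\rho\alpha)^4=1$. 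The matching existence of an order-$8$ element — the main source of concrete bookkeeping, and the step I expect to be the most delicate — is confirmed by writing down a Weierstrass-type model of the unique Enriques surface in sub-case (a) and exhibiting the automorphism explicitly.
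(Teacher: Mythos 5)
Your skeleton agrees with the paper's: split off the exact sequence $1\to\Aut_{\IP^1}(X)\to\Aut(X)\xrightarrow{\rho}G\to1$, get $\Aut_{\IP^1}(X)\cong\IZ/2\IZ\times D_\infty$ from \Cref{lem: sign involution} and Oguiso--Shioda, and bound $G$ by the stabilizer in $\mathbb{G}_m$ of the four nodal points (the paper does this via an explicit Weierstra\ss{} equation $y^2=x^3+2a_2x^2+t^4x$ and the condition that $\Delta_0(\lambda s,t)$ be proportional to $\Delta_0(s,t)$, which yields exactly your case division). The char~$2$ argument via the degree-$8$ $j$-map is also the paper's.

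The genuine gap is in your extension analysis, which is the heart of the proposition and precisely the point where Barth--Peters went wrong. You propose to force $\widetilde\rho^2=\sigma$ (case (b)) and $\widetilde\rho^4=\sigma$ (case (a)) by "tracking the action of $\widetilde\rho$ on the components of $F_0$ and on $F_0'$" via \Cref{lem: action.on.In} and \Cref{lem: num.triv.of.BP}. This cannot work: $\sigma$ is numerically (indeed cohomologically) trivial, so it acts trivially on the dual graph of every fiber and fixes $F_0'$ pointwise. Consequently no bookkeeping of components, orbits of components, or restrictions to curves in the fixed locus of $\sigma$ can distinguish $\widetilde\rho^k=\sigma$ from $\widetilde\rho^k=\mathrm{id}$, i.e., $\IZ/4\IZ\times D_\infty$ from $(\IZ/2\IZ)^2\times D_\infty$ in case (b). The paper settles this by an explicit computation: it writes down the lift $g'\colon(s,t,x,y)\mapsto(\lambda s,t,\lambda^2x,\lambda y)$ of $\rho(g)$ to $J(X)$, passes to the K3 cover (obtained by substituting $(s^2,t^2)$), where the lift necessarily involves $\sqrt\lambda$ and hence has order $2n$; descending, $g''^n=\sigma$ because $\widetilde{g'}{}^n\circ\tau$ is the translation by the $2$-torsion section. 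This order-doubling is the one idea your plan is missing, and your only concrete construction (an explicit model exhibiting an order-$8$ element) is confined to case (a) and deferred.

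Two smaller points. First, your realization of the image $G$ ("each base symmetry preserves $(J(f),\pi'(E_1^-))$, which determines $X$, hence lifts") needs the verification that a projectivity permuting the four nodal points actually preserves the fibration $J(f)$, not just its discriminant locus; this again comes down to the Weierstra\ss{} normal form (here it works because $\Delta_0=a_2^2-t^4$ determines $a_2$ up to sign). Second, in case (b) the isomorphism $\Aut(X)\cong\IZ/4\IZ\times D_\infty$ requires showing that the order-$4$ lift commutes with a generator of the free part of $\MW(J(f))$; the paper does this by exhibiting an explicit section of height $\tfrac12$ and checking its preimage in the K3 cover is preserved. Your parenthetical "once one observes that $\widetilde\rho$ commutes with the $D_\infty$-factor" hides a computation of the same nature.
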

\begin{proof}
Let \(f\colon X \to \IP^1\) be the elliptic fibration induced by \(|2F_0|\).
By \Cref{lem: num.triv.of.BP}, we know that $\MW(J(f))$ contains an element of order $2$, hence, by \cite[Main Theorem]{OguisoShioda}, we have $\MW(J(f)) \cong \mathbb{Z}/2\mathbb{Z} \times \mathbb{Z}$. 
Since $f$ is not isotrivial, because there is a multiplicative fiber, we have $\Aut_{\mathbb{P}^1}(X) \cong \mathbb{Z}/2\mathbb{Z} \times D_{\infty}$ by \Cref{lem: sign involution}.

It remains to study the image of the homomorphism $\rho \colon \Aut(X) \to {\rm PGL}_2$ induced by the action of $\Aut(X)$ on the base of~\(f\). Let $g \in \Aut(X)$. Then, by functoriality of the Jacobian, $g$ induces an automorphism $g'$ of $J(X)$ that acts on the base of $J(f)$ as $\rho(g)$. By \cite[Theorem~3.3]{DolgachevMartin}, $g'$ preserves the zero section, hence also the unique $2$-torsion section of $J(f)$.

If $p \neq 2$, an explicit computation with Weierstraß equations using Tate's algorithm \cite{Tate} shows that $J(f)$ can be defined by an equation of the form
\begin{equation} \label{eq: Jacobian.of.BP}
y^2 = x^3 + 2a_2(s,t)x^2 + t^4x,
\end{equation}
where the \(2\)-torsion section is given by \((x,y) = (0,0)\), the $\I_8$-fiber lies over $t = 0$ and the other singular fibers lie over the roots of $\Delta_0(s,t) = a_2^2 - t^4$, where $\Delta(s,t) = - 64 t^8 \Delta_0$ is the discriminant. Since we assume that $X$ has infinite automorphism group, the four roots of $\Delta_0$ must be distinct.
We know from \Cref{lem: num.triv.of.BP} that the other half-fiber of $f$ is smooth, and after a change of coordinates, we may assume that it lies over $s=0$. In particular, both $\Delta_0(0,1)$ and $\Delta_0(1,0)$ are non-zero.
Since $\rho(g)$ fixes the two points corresponding to the half-fibers of~\(f\), we know that $\rho(g)$ fixes the points $t = 0$ and $s = 0$, so that it is given by an automorphism of the form $s \mapsto \lambda s$ for some $\lambda \in k^{\times}$. 

Next, write $a_2 = as^2 + bst + ct^2$ with $a,b,c \in k$ and recall that $\rho(g)$ preserves the roots of 
\[
\Delta_0 = a_2^2 - t^4 = a^2s^4 + 2abs^3t + (2ac + b^2)s^2t^2 + 2bcst^3 + (c^2 - 1)t^4
\]
and that $c^2 - 1 = \Delta_0(0,1) \neq 0$ and $a^2 = \Delta_0(1,0) \neq 0$.
In particular, we can rescale coordinates to assume $\Delta_0(1,1) = 0$.
Since $\rho(g)$ preserves the roots of $\Delta_0$, the polynomial $\Delta_0(\lambda s ,t)$ must be a multiple of $\Delta_0(s,t)$.
The automorphism $\rho(g)$ changes the coefficients of the monomials in $\Delta_0(s,t)$ as follows:
\[
[a^2:2ab:2ac+b^2:2bc:c^2-1] \mapsto [\lambda^4 a^2:2\lambda^3ab:\lambda^2(2ac+b^2):\lambda(2bc):c^2-1].
\] 
We deduce that $\lambda^4 = 1$, that $b = 0$ if $\lambda \neq 1$, and that $b = c = 0$ if $\lambda^2 \neq 1$, i.e., the order of \(\rho(g)\) is \(n \in \{1,2,4\}\).

Observe that, in every case, $\rho(g)$ sends $a_2$ to $\lambda^2 a_2$ and that $\lambda^4 = 1$. Hence, up to composing with $y \mapsto - y$, we deduce from the structure of isomorphisms between Weierstraß forms (see, e.g., \cite[Chapter~III, Proposition~3.1]{Silverman}) that $g'$ must be an automorphism of the following form:
\[
g' \colon (s,t,x,y) \mapsto (\lambda s ,t,\lambda^2 x,\lambda y).
\]
Now, we reverse the construction of \Cref{lem: num.triv.of.BP}: the Weierstraß model of the K3 cover $\widetilde{X}$ of $X$ is given by replacing $(s,t)$ by $(s^2,t^2)$ in Equation \eqref{eq: Jacobian.of.BP}, hence by
\[
y^2 = x^3 + 2a_2(s^2,t^2)x^2 + t^8x,
\]
and the Enriques involution $\tau$ is the composition of $(s,t) \mapsto (-s,t)$ with the translation by the section with $(x,y) = (0,0)$. The automorphism $g'$ lifts to $\widetilde{X}$ as 
\[
\widetilde{g'} \colon (s,t,x,y) \mapsto (\sqrt{\lambda} s,t,\lambda^2 x, \lambda y).
\]
This is an automorphism of order $2n$, where $n$ is the order of $\lambda \in k^{\times}$, and it commutes with $\tau$, hence it yields an automorphism $g'' \in \Aut(X)$ of order $2n$. Note that $g''^n = \sigma$ is the cohomologically trivial involution of $X$. Indeed, $\widetilde{g'}^n \circ \tau$ coincides with the translation by $(x,y) = (0,0)$, and this induces $\sigma$ on~\(X\) by \Cref{lem: num.triv.of.BP}.

We conclude that $\Aut(X)$ is generated by a $g''$ as above with maximal $n$, an involution $\iota$ of the generic fiber of~\(f\) as in \Cref{lem: sign involution}, and the translation by a generator $E$ of the free part of $\MW(J(f))$. We now have three cases:
\begin{enumerate}
\item $n = 1$: This happens if and only if $b \neq 0$. In this case, the action of $\Aut(X)$ on the base of~\(f\) is trivial and $\Aut(X) = \Aut_{\mathbb{P}^1}(X) \cong \mathbb{Z}/2\mathbb{Z} \times D_{\infty}$. Note that in this case $\Delta_0(-1,1) \neq 0$, so this corresponds to Case (2) (c) in the statement of the proposition.
\item $n = 2$: This happens if and only if $b = 0$ and $c \neq 0$. In this case, pick $d \in k$ with $d^2 + 2cd + 1 = 0$ and let $E$ be the section $(x,y) = (dt^2, \sqrt{2a}dst^2)$. This is a section of height $\frac{1}{2}$, hence a generator of the free part of $\MW(J(f))$ by \cite{OguisoShioda}. The automorphism $\widetilde{g'}$ preserves the preimage $(x,y) = (dt^4,\sqrt{2a}ds^2t^4)$ of $E$ in $\widetilde{X}$, hence commutes with the translation by $E$, and hence so does $g''$. As $g''$ commutes with $\iota$, we deduce that $\Aut(X) \cong \mathbb{Z}/4\mathbb{Z} \times D_{\infty}$. Note that in this case $\Delta_0(-1,1) = \Delta_0(1,1) = 0$ but $\Delta_0(\zeta_4,1) \neq 0$, so this corresponds to Case (2) (b) in the statement of the proposition.
\item $n = 4$:  This happens if and only if $b = c = 0$. Here, $\Aut(X)$ contains the automorphism group of Case (2) as a normal subgroup of index $2$, hence $\Aut(X)$ is an extension of $\mathbb{Z}/2\mathbb{Z}$ by $\mathbb{Z}/4\mathbb{Z} \times D_{\infty}$. This extension does not split, since $\Aut(X)$ acts through $\mathbb{Z}/4\mathbb{Z}$ on $\mathbb{P}^1$, while $\mathbb{Z}/4\mathbb{Z} \times D_{\infty}$ acts through $\mathbb{Z}/2\mathbb{Z}$. Note that in this case $\Delta_0(\zeta_4^i,1) = 0$ for $i = 0,1,2,3$, so this corresponds to Case (2) (a) in the statement of the proposition.

\end{enumerate}

Next, assume that $p = 2$. By \cite[Section 2]{LangConfigs}, the $j$-map of $J(f)$ has degree $8$ and it is $\rho(\Aut(X))$-invariant, hence the order of $\rho(\Aut(X))$ is a power of $2$. On the other hand, $\rho(\Aut(X))$ fixes the two points corresponding to the two half-fibers, hence it must have odd order. We conclude that $\rho(\Aut(X))$ must be trivial if $p = 2$, hence $\Aut(X) = \Aut_{\mathbb{P}^1}(X) \cong \mathbb{Z}/2\mathbb{Z} \times D_{\infty}$.
\end{proof}

\begin{remark}
The proof of \Cref{prop: Aut of BP} shows that Enriques surfaces of type \(\tilde A_7\) form a $2$-dimensional family in all characteristics. If $p \neq 2$, the subfamily where $\Aut(X) \cong \mathbb{Z}/4\mathbb{Z} \times D_{\infty}$ is $1$-dimensional and there is a unique Enriques surface of type \(\AA\) where $\Aut(X)$ is a non-split extension of $\mathbb{Z}/2\mathbb{Z}$ by $\mathbb{Z}/4\mathbb{Z} \times D_{\infty}$. We also recall that there is a $1$-dimensional family of Enriques surfaces of type~$\widetilde{A}_7$ with finite automorphism group in all characteristics. These are called ``type~$\I$'' surfaces in \cite{Kondo} and \cite{MartinFiniteAut}.
\end{remark}

\begin{remark} \label{remark:BP.wrong}
In \cite[Theorem 4.12]{BarthPeters}, it is claimed that the automorphism group of a surface of type \(\tilde A_7\) is never larger than $\mathbb{Z}/4\mathbb{Z} \times D_{\infty}$. This is due to an erroneous calculation in the last lines of the proof of \cite[Lemma 4.3]{BarthPeters}. 

The Enriques surface of type \(\tilde A_7\) that corresponds to Case (2) (a) in \Cref{prop: Aut of BP} can be realized as a member of the family considered by Barth--Peters as follows.
Consider the double cover $\widetilde{X}$ of $\mathbb{P}^1 \times \mathbb{P}^1$ branched over the curve
\[
(v_0^2 - v_1^2)((v_0^2-v_1^2)u_0^4 + (v_0^2 + v_1^2)u_1^4)).
\]
This corresponds to the parameters $(a,b,c,d) = (1,0,1,-1)$ in \cite[Section 4.1]{BarthPeters}. The minimal resolution of the quotient of $\widetilde{X}$ by the involution induced by 
\[([u_0:u_1],[v_0:v_1]) \mapsto ([-u_0:u_1],[-v_0:v_1])\] 
is an Enriques surface of type \(\tilde A_7\). This surface admits an automorphism of order $8$ induced by 
\[([u_0:u_1],[v_0:v_1]) \mapsto ([\zeta_8 u_0:u_1],[\zeta_4 v_1: \zeta_4 v_0]).\]
The existence of this automorphism contradicts the last two lines of the proof of \cite[Lemma 4.3]{BarthPeters}.

This model can be used to give a more explicit description of $\Aut(X)$. We leave the details to the interested reader.
\end{remark}

\subsection{Type \texorpdfstring{\(\EE\)}{E6}}
Given an Enriques surface of type \(\EE\), we let \(F_0\) be the (additive) half-fiber of type \(\IV^*\) which can be found in the defining dual graph:
\[
 \begin{tikzpicture}
        \node (R0) at (90:1.5) [nodal,fill=white] {};
        \node (R1) at (90:1) [nodal] {};
        \node (R2) at (90:0.5) [nodal] {};
        \node (R3) at (0:0) [nodal] {};
        \node (R4) at (210:0.5) [nodal] {};
        \node (R5) at (210:1) [nodal] {};
        \node (R6) at (210:1.5) [nodal,fill=white] {};
        \node (R7) at (330:0.5) [nodal] {};
        \node (R8) at (330:1) [nodal] {};
        \node (R9) at (330:1.5) [nodal,fill=white] {};
        \draw[densely dashed, very thick] (R1)--(R2)--(R3)--(R4)--(R5) (R3)--(R7)--(R8);
        \draw (R0)--(R1) (R5)--(R6) (R8)--(R9);
    \end{tikzpicture}
\]
By \Cref{lem: genus.1.fibrations}, Enriques surfaces of type \(\EE\) only exist in characteristic \(2\) and are either classical or supersingular.

Recall that classical and supersingular Enriques surfaces $X$ in characteristic $2$ have the property that $h^0(X,\Omega_X) = 1$. The divisorial part $D$ of the zero locus~$Z$ of a global $1$-form on~\(X\) is called \emph{conductrix}. For general $X$, this conductrix is empty and $Z$ consists of $12$ reduced points by the Hirzebruch--Riemann--Roch formula. In contrast, if $X$ is of type \(\EE\), then \(D = F_0\) by \cite{Ekedahl.Shepherd-Barron}. It is clear that all automorphisms of $X$ preserve the conductrix, so we deduce the following theorem from \Cref{prop:Enriques.zero.entropy}.

\begin{theorem} \label{thm:EE}
Every Enriques surface of type \(\EE\) has zero entropy. More precisely, \(|2F_0|\) is the unique non-extremal genus \(1\) fibration on \(X\) if \(\Aut(X)\) is infinite.
\end{theorem}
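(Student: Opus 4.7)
The plan is to combine the Ekedahl--Shepherd-Barron conductrix computation quoted in the paragraph preceding the theorem with \Cref{prop:Enriques.zero.entropy}. Since $X$ is of type \(\EE\), the conductrix $D$ equals $F_0$, and because the conductrix is a canonical invariant of $X$ every automorphism of $X$ preserves it. Therefore $\Aut(X)$ preserves the divisor $F_0$, and hence also the linear system $|2F_0|$. In other words, $|2F_0|$ is a genus~$1$ fibration stabilized by all of $\Aut(X)$, so condition~(5) of \Cref{prop:Enriques.zero.entropy} is satisfied. The implication $(5) \Rightarrow (1)$ of that proposition then gives at once that $X$ has zero entropy.

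For the second assertion, I would assume that $\Aut(X)$ is infinite. By the equivalence $(1) \Leftrightarrow (4)$ in \Cref{prop:Enriques.zero.entropy} there is exactly one non-extremal genus~$1$ fibration $|2F|$ on $X$, and it remains to identify it with $|2F_0|$. Reading off the proof of $(3) \Rightarrow (4)$ of that proposition, the image $\Gamma = \varphi(\Aut(X)) \subset \mathrm{O}(\Num(X))$ is elementary of parabolic type and therefore has a \emph{unique} fixed point on the ideal boundary $\partial\overline{\mathbb{H}}_X$, namely the class of $F$. Since $F_0$ is nef and isotropic with $F_0^2=0$, the class $[F_0]$ also represents a point of $\partial\overline{\mathbb{H}}_X$, and it is fixed by $\Gamma$ because $\Aut(X)$ preserves $F_0$. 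Uniqueness of the parabolic fixed boundary point forces $[F_0] = [F]$, whence $|2F_0| = |2F|$ is the unique non-extremal genus~$1$ fibration.

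\textbf{Main obstacle.} There is really no serious technical obstacle once the equality $D = F_0$ from \cite{Ekedahl.Shepherd-Barron} and \Cref{prop:Enriques.zero.entropy} are available: both parts of the theorem follow essentially formally. The only point requiring some care is to extract from the hyperbolic-geometric proof of $(3) \Rightarrow (4)$ of \Cref{prop:Enriques.zero.entropy} the statement that the parabolic group $\Gamma$ has a unique fixed point on $\partial\overline{\mathbb{H}}_X$, which is exactly what pins down $[F_0]$ as the class of the half-fiber of the unique non-extremal fibration.
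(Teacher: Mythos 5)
Your proposal is correct and follows essentially the same route as the paper: the conductrix equals $F_0$ and is preserved by every automorphism, so condition (5) of \Cref{prop:Enriques.zero.entropy} holds and zero entropy follows. The paper leaves the identification of $|2F_0|$ with the unique non-extremal fibration implicit, and your argument via the uniqueness of the parabolic fixed boundary point of $\Gamma$ is a valid way to fill in that step.
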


\begin{proposition}
If $X$ is an Enriques surface of type \(\EE\) with infinite automorphism group, then 
\[
\Aut(X) \cong \MW(|2F_0|) \rtimes \mathbb{Z}/2\mathbb{Z} \quad \text{and} \quad \MW(|2F_0|) \in \{\mathbb{Z},\mathbb{Z}^2\}.
\]
\end{proposition}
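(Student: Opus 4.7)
The plan is to first reduce to computing $\Aut_{\mathbb{P}^1}(X)$ using \Cref{thm:EE}, then apply \Cref{lem: sign involution}, and finally determine $\MW(|2F_0|)$ via lattice-theoretic constraints on the Jacobian $J(f)$.

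First, I would observe that the fibration $f\colon X \to \mathbb{P}^1$ induced by $|2F_0|$ is elliptic, since $\IV^*$ does not appear in the quasi-elliptic classification of \Cref{tab:quasi-elliptic}. By \Cref{thm:EE}, every automorphism of $X$ preserves $f$, yielding a homomorphism $\rho\colon \Aut(X) \to \Aut(\mathbb{P}^1)$ with kernel $\Aut_{\mathbb{P}^1}(X)$. I would then verify that $f$ is non-isotrivial, which should follow from the specific structure imposed on a Weierstrass model of $J(f)$ by the presence of the $\IV^*$ fiber combined with the non-extremality given by \Cref{thm:EE}. Applying \Cref{lem: sign involution} then yields $\Aut_{\mathbb{P}^1}(X) \cong \MW(|2F_0|) \rtimes \mathbb{Z}/2\mathbb{Z}$.

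Next, I would show that $\rho$ is trivial, which is expected to be the main obstacle. Since the conductrix $F_0$ is canonically attached to $X$, $\rho$ fixes the point $b_0 \in \mathbb{P}^1$ over which $F_0$ lies, and in the classical case it also fixes the point corresponding to the second half-fiber. To rule out the remaining non-trivial automorphisms of the base, I would use an explicit Weierstrass equation for $J(f)$ adapted to the $\IV^*$ fiber at $b_0$, in the spirit of \Cref{prop: Aut of BP}, and show that any coordinate change of $\mathbb{P}^1$ preserving the discriminant locus and the half-fiber(s) of $f$ must be trivial. The additive nature of the $\IV^*$ fiber in characteristic $2$ should provide sufficient rigidity for this conclusion.

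Finally, I would compute $\MW(|2F_0|)$ using that $J(f)$ is a rational elliptic surface. Shioda's formula gives an upper bound on the Mordell--Weil rank of $8 - \mathrm{rk}(E_6) = 2$, while \Cref{thm:EE} gives a lower bound of $1$. The Mordell--Weil torsion injects into the product of component groups of the reducible fibers of $J(f)$: the $\IV^*$ fiber contributes $\mathbb{Z}/3\mathbb{Z}$, and any non-trivial $3$-torsion section would require a second reducible fiber with component group divisible by $3$ (i.e., of type $\IV$, $\IV^*$, or $\I_{3k}$), forcing an extremal configuration by Shioda's formula and contradicting \Cref{thm:EE}. The possible $2$-torsion in the remaining non-extremal configurations with a $\IV^*$ fiber is ruled out by a case analysis using the Oguiso--Shioda classification of Mordell--Weil lattices on rational elliptic surfaces, yielding $\MW(|2F_0|) \in \{\mathbb{Z}, \mathbb{Z}^2\}$.
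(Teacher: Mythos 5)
There is a genuine gap: your claim that $f$ is non-isotrivial is not justified and is in fact not something you can verify, because isotrivial elliptic fibrations with a $\IV^*$ fiber and infinite Mordell--Weil group do exist in characteristic $2$ (where all surfaces of type \(\EE\) live). The paper exhibits them explicitly: Tate's algorithm produces the Weierstraß form $y^2 + st^2y = x^3 + at^2x^2 + bt^6$, which has constant $j$-invariant $0$, a $\IV^*$ fiber over $t=0$, a second singular fiber of type $\II$ or $\III$ over $s = 0$, and Mordell--Weil rank $1$ or $2$. So the dichotomy isotrivial/non-isotrivial must be handled as two genuine cases, and the isotrivial one is the harder of the two: there \Cref{lem: sign involution} does not apply at all, the generic fiber is a supersingular elliptic curve whose geometric automorphism group has order $24$ rather than $2$, and one must show by an explicit analysis of coordinate changes of the Weierstraß equation that nevertheless only the sign involution survives beyond $\MW(|2F_0|)$ and that the action on the base is trivial. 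Your proposal silently collapses this case, so the asserted isomorphism $\Aut_{\IP^1}(X) \cong \MW(|2F_0|) \rtimes \IZ/2\IZ$ is unproved exactly where it is least obvious.

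The remaining steps are essentially sound but differ in emphasis from the paper. In the non-isotrivial case the paper does not need Weierstraß equations to kill $\rho$: it first shows $X$ is classical (via the canonical cover over a multiplicative fiber), so both half-fibers are fixed and $\Aut(X)$ acts on $\IP^1$ through $k^{\times}$; a non-trivial action would force the number of $\I_n$ fibers of each type to be odd and larger than one per orbit, contradicting the known singular-fiber configurations. Your lattice-theoretic computation of $\MW(|2F_0|)$ (rank between $1$ and $2$, torsion ruled out by the component groups) is correct and matches the paper's appeal to Oguiso--Shioda, though note that with a $\IV^*$ fiber alone the torsion is already trivial since $E_6$ is primitively embedded in $E_8$, so the case analysis you sketch is shorter than you anticipate. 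To repair the proof you must add the isotrivial branch.
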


\begin{proof}
Let \(f\colon X \to \IP^1\) be the elliptic fibration induced by \(|2F_0|\).
First, assume that $f$ is non-isotrivial, so that it admits a fiber $G$ of type~$\I_n$ with $n \geq 1$.
By \cite[Lemma~3.2]{Katsura.Kondo.Martin}, the canonical cover $\pi \colon \widetilde{X} \to X$ coincides with the Frobenius pullback of $J(f)$ in a neighborhood of $G$, hence $\widetilde{X}$ has some $A_1$-singularities over~$G$, so by \cite[Corollary~3.7]{Katsura.Kondo.Martin} and because $X$ admits a quasi-elliptic fibration by \cite[Theorem~1.1]{Ekedahl.Shepherd-Barron}, $X$ is classical.

The second half-fiber $F'_0$ of~\(f\) is not of type~$\I_n$ by \Cref{lem: halffibers} and not of the same type as $F_0$, because the Picard rank of $X$ is $10$. Since $\Aut(X)$ preserves both $F_0$ and $F'_0$, it acts on $\mathbb{P}^1$ through $k^{\times}$. Thus, if this action is non-trivial, then the number of fibers of type~$\I_n$ for a given $n$ must be odd, contradicting the possible configurations of singular fibers of~\(f\) determined in \cite[p.~5826]{LangConfigs}. 
Therefore, $\Aut(X)$ coincides with the automorphism group of the generic fiber $F_{\eta}$ of~\(f\), which is $\MW(J(f)) \rtimes \mathbb{Z}/2\mathbb{Z}$ by \Cref{lem: sign involution}, since $F_{\eta}$ is ordinary. By \cite{OguisoShioda}, we know that $\MW(J(f))$ is either $\mathbb{Z}$ or $\mathbb{Z}^2$, depending on whether $f$ has a second reducible fiber or not.

Next, assume that $f$ is isotrivial. Then, by \cite[p.~5834]{LangConfigs}, $f$ admits a second singular fiber $G$ of type~$\II$ or $\III$. 
A computation using Tate's algorithm shows that $J(f)$ admits a Weierstraß equation of the form
\[
y^2 + st^2y = x^3 + at^2x^2 + bt^6,
\]
with $a,b \in k$, not both $0$. The fibration $J(f)$ admits a fiber of type~$\IV^*$ over $t = 0$ and the other singular fiber $G$ over $s = 0$.
Every automorphism of $J(X)$ is of the form
\[
g'\colon (s,t,x,y) \mapsto (\lambda s, \mu t, \beta x + b_2(s,t), y + b_1(s,t)x + b_3(s,t))
\]
which sends the Weierstraß form to
\begin{multline*}
    y^2 + \lambda \mu^2 st^2 y = \beta^3 x^3 + (\beta^2 b_2 + b_1^2 + a \beta^2 \mu^2 t^2) x^2 \\ + (\lambda \mu^2 st^2 b_1 + \beta b_2^2) x + b_3^2 + \lambda \mu^2  st^2b_3 + b_2^3 + a \mu^2 t^2b_2^2 + b \mu^6 t^6.
\end{multline*}
Comparing coefficients of $x$, we see that $s \mid b_1$ and $st \mid b_2$, and then comparing the coefficients of $x^2$ yields $b_1 = b_2 = 0$. Moreover, $\lambda = \mu^{-2}$ and $\beta^3 = 1$, so that the above new Weierstraß equation simplifies to
$$
y^2 + st^2 y = x^3 + a \beta^2 \mu^2 t^2 x^2 + b_3^2 + st^2b_3 + b \mu^6 t^6.
$$
Thus, if $a \neq 0$, then $\beta^2\mu^2 = 1$, hence $\mu$ is a third root of unity and $b_3 \in \{0,st^2\}$, so that $g'$, if non-trivial, is the sign involution.

If $a = 0$, we can rescale coordinates to assume $b = 1$. Then, the only additional condition we have is that $b_3^2 + st^2 b_3 = (1 + \mu^6) t^6$. If $t^6$ occurs on the left-hand side with non-zero coefficient, then so does $st^5$, which is absurd. Hence, $\mu^6 = 1$. But then again $\mu^3 = 1$ and $b_3 \in \{0,st^2\}$, so that $g'$ is either trivial or the sign involution.

This shows that $\Aut(J(X)) = \MW(J(f)) \rtimes \mathbb{Z}/2\mathbb{Z}$ acting trivially on the base of $J(f)$. Thus, $\Aut(X)$ acts trivially on the base of~\(f\) and if $F_{\eta}$ is the generic fiber of~\(f\), then the natural map $\varphi\colon \Aut(F_{\eta}) \to \Aut(\Pic^0_{F_\eta})$, whose kernel is $\MW(J(f))$, factors through $\mathbb{Z}/2\mathbb{Z}$. The sign involution on the generic fiber of~\(f\) has non-zero image under $\varphi$, so the proposition follows.
\end{proof}

\begin{remark}
By \cite[Theorem~1]{Salomonsson}, Enriques surfaces of type \(\EE\) form a \(3\)-di\-men\-sion\-al family and the generic member satisfies $\MW(|2F_0|) \cong \mathbb{Z}^2$.
The surfaces with $\MW(|2F_0|) \cong \mathbb{Z}$ form a subfamily of dimension $2$ and the ones with finite automorphism group form a subfamily of dimension $1$. In each of these strata, the generic member is a classical Enriques surface and the supersingular Enriques surfaces form a subfamily of codimension $1$.
\end{remark}

\subsection{Type \texorpdfstring{\(\XX\)}{D6+A1}}

Given an Enriques surface of type \(\XX\), we let \(F_1\) be the following (additive) half-fiber of type \(\I_2^*\) which can be found in the defining dual graph:
\[
 \begin{tikzpicture}[scale=0.6]
    \node (R4) at (0,0) [nodal] {};
    \node (R5) at (1,1) [nodal] {};
    \node (R6) at (1,0) [nodal] {};
    \node (R7) at (2,0) [nodal] {};
    \node (R8) at (3,0) [nodal,fill=white] {};
    \node (R9) at (4,0) [nodal,fill=white] {};
    \node (R3) at (-1,0) [nodal] {};
    \node (R2) at (-2,0) [nodal] {};
    \node (R1) at (3,1) [nodal,fill=white] {};
    \node (RX) at (-1,1) [nodal] {};
    \node (RXX) at (5,0) [nodal,fill=white] {};
    \draw (R7)--(R8)--(R9) (R8)--(R1);
    \draw[densely dashed, very thick] (R2)--(R7) (R5)--(R6) (RX)--(R3);
    \draw[double] (R9)--(RXX);
\end{tikzpicture}
\]


\begin{lemma} \label{lem: xxgeneralities}
The following hold:
\begin{enumerate}
\item An Enriques surface is of type~$\XX$ if and only if it admits a genus~\(1\) fibration with half-fibers of type~$\I_2^*$ and $\III$. This fibration is quasi-elliptic.
\item Every Enriques surface of type~$\XX$ is a classical Enriques surface in characteristic $2$.
\item Enriques surfaces of type~$\XX$ exist and form a $2$-dimensional family.
\item The conductrix of an Enriques surface of type~$\XX$ looks as follows:
\begin{center}
    \begin{tabular}{cc}
 \begin{tikzpicture}[scale=0.6]
    \node (R4) at (0,0) [nodal, label=below:$1$] {};
    \node (R5) at (1,1) [nodal, label=above:$1$] {};
    \node (R6) at (1,0) [nodal, label=below:$2$] {};
    \node (R7) at (2,0) [nodal, label=below:$1$] {};
    \node (R8) at (3,0) [nodal, label=below:$1$] {};
    \node (R9) at (4,0) [nodal] {};
    \node (R3) at (-1,0) [nodal, label=below:$1$] {};
    \node (R2) at (-2,0) [nodal] {};
    \node (R1) at (3,1) [nodal] {};
    \node (RX) at (-1,1) [nodal] {};
    \node (RXX) at (5,0) [nodal] {};
\draw (R2)--(R3)--(R6) (R5)--(R6)--(R9) (R1)--(R8) (R3)--(RX);
\draw[double] (R9)--(RXX);
\end{tikzpicture}
    \end{tabular}
    \end{center}
    \item Every Enriques surface of type~$\XX$ admits a unique non-trivial numerically trivial involution $\sigma$. Moreover, \(\sigma \in \MW(|2F_1|)\).
    \item Every Enriques surface of type~$\XX$ has infinite automorphism group.
\end{enumerate}
\end{lemma}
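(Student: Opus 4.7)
My plan is to identify the fibration $|2F_1|$ directly from the defining dual graph and to propagate its structure through properties (2)--(6) with the aid of the classification results of \Cref{sec: preliminaries}.

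For (1), on the forward side I would take $F_1$ to be the effective divisor supported on the $\tilde{D}_6$-subdiagram of the graph with the Kodaira multiplicities of an $\I_2^*$ fiber (multiplicity $1$ on the two leaves at each fork and multiplicity $2$ along the central chain of three components), verify $F_1^2=0$, and then exhibit a curve $B$ in the graph (the component of the $\tilde{D}_6$-chain adjacent to the $\tilde{A}_1$ pair) with $F_1 \cdot B = 1$. Primitivity of $F_1 \in \Pic(X)$ follows, so $F_1$ is a half-fiber of type $\I_2^*$. The two curves of the $\tilde{A}_1$ pair give an effective divisor $F_2$ of type $\III$ orthogonal to $F_1$ and with $F_2 \cdot B = 1$, hence $F_2$ is a second half-fiber of $|2F_1|$. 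The converse is immediate. For quasi-ellipticity, I would observe that there is an additional $(-2)$-curve $C$ in the graph with $C \cdot F_1 = C \cdot F_2 = 0$ that is not a component of $F_1$ or $F_2$, so $C$ is contained in a third reducible fiber of $|2F_1|$; this rules out an elliptic realization with configuration $(\I_2^*,\III,\I_1)$, and Euler-characteristic bookkeeping together with \Cref{tab:elliptic} and \Cref{tab:quasi-elliptic} leaves only the quasi-elliptic configuration $(\I_2^*,\III,\III)$. Part (2) is then immediate from \Cref{lem: halffibers}, since two additive half-fibers can occur only on a classical Enriques surface in characteristic~$2$.

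For (3), (4) and (5) I would invoke the existing classification tools. Part (3) follows from a direct parameter count in a Weierstraß-type normal form for extremal quasi-elliptic rational surfaces with $(\I_2^*,\III,\III)$-fibers in characteristic $2$ \cite{Ito:char2, Lang1, Lang2}, together with the essentially finite additional choices made in the Enriques construction. Part (4) is an application of Ekedahl--Shepherd-Barron's classification of conductrices of classical Enriques surfaces in characteristic $2$ \cite{Ekedahl.Shepherd-Barron}, which determines the divisorial part of the zero locus of a global $1$-form purely from the quasi-elliptic fibration data. For (5), \Cref{tab:quasi-elliptic} gives $\MW(J(|2F_1|)) \cong (\IZ/2\IZ)^2$, producing three involutions in $\Aut(X)$ through the identification $\MW(|2F_1|) \subseteq \Aut(X)$; by \Cref{lem: action.on.additive} each preserves every component of $F_1$, and an inspection of the induced action on $F_2$, on the bisection $B$, and on the third-fiber component $C$ singles out a unique involution $\sigma$ acting trivially on the class of every curve in the defining graph. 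Since these classes span $\Num(X) \otimes \IQ$, $\sigma$ is numerically trivial, and uniqueness follows from \cite{DolgachevMartinNUM}.

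For (6), the plan is to exhibit a non-extremal genus $1$ fibration on $X$ and apply \Cref{prop:Enriques.finite.Aut}. Concretely, I would locate a second isotropic nef class $F_3 \in \Num(X)$ as a suitable combination of the curves $B$, $C$ and certain simple components of $F_1$ and $F_2$, and then use the orthogonal complement of $F_3$ inside the span of the defining graph to read off the reducible fibers of $|2F_3|$. A comparison with \Cref{tab:elliptic} and \Cref{tab:quasi-elliptic} should show that this fiber configuration is not extremal; by Shioda--Tate the Jacobian $J(|2F_3|)$ then has positive Mordell--Weil rank, and \Cref{prop:Enriques.finite.Aut} forces $\Aut(X)$ to be infinite. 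I expect (6) to be the main obstacle: it is the only step that requires a genuine lattice-theoretic construction rather than bookkeeping against cited tables, and one must be attentive to characteristic~$2$ phenomena (wild conductors, the restricted list of $\MW$ configurations) when identifying the singular fibers of $|2F_3|$ and ruling out extremality.
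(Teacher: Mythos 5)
Your treatment of claims (1), (2), (4) and (5) runs along essentially the same lines as the paper: the half-fibers are read off the defining graph, quasi-ellipticity is forced because no extremal rational elliptic configuration containing both an $\I_2^*$ and a $\III$ fiber exists (your Euler-characteristic bookkeeping is a harmless variant of the paper's appeal to extremality plus \Cref{tab:elliptic}), claim (2) follows from \Cref{lem: halffibers}, the conductrix is a citation, and $\sigma$ is isolated inside $\MW(|2F_1|)\cong(\IZ/2\IZ)^2$ exactly as in the paper, as the unique non-trivial element acting trivially on the components of the third (simple) $\III$-fiber. Two smaller caveats. First, the converse in (1) is not ``immediate'': to recover the full eleven-vertex graph from a fibration with half-fibers $\I_2^*$ and $\III$ you need the curve of cusps and a third reducible fiber, i.e.\ precisely the quasi-elliptic structure; the paper quotes \cite{LangConfigs} for this. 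Second, for (3) the moduli count is a genuinely non-trivial recent result (the paper cites \cite{Katsura.Schuett}); it does not drop out of ``essentially finite additional choices'' on top of a normal form for the Jacobian.

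The real gap is in (6). You propose to exhibit a non-extremal fibration $|2F_3|$ and to read off its reducible fibers from the orthogonal complement of $F_3$ inside the span of the defining graph. But extremality concerns \emph{all} reducible fibers of $|2F_3|$, including those supported on $(-2)$-curves that do not appear among the eleven visible ones; the visible curves only bound the rank of the fibral lattice from below, whereas non-extremality requires the upper bound ${\rm rank}<8$ in the Shioda--Tate formula. For the relevant class (the $\tilde{E}_7$-configuration $G_0$ of \Cref{lem: xxfixedlocus}) the visible fibral contribution is a single $\III^*$, of rank $7$, so one hidden $\I_2$ or $\III$ fiber elsewhere would make the fibration extremal. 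Excluding that is exactly the content of \Cref{prop: xximportantproperties}, and the argument there needs the numerically trivial involution exchanging the two half-fibers of $|2F_0|$ --- geometric input you have not established at this stage. The paper sidesteps all of this for claim (6) by invoking the classification of Enriques surfaces with finite automorphism group from \cite{Katsura.Kondo.Martin}, on none of which the type-$\XX$ configuration occurs. As written, your plan for (6), combined with \Cref{prop:Enriques.finite.Aut}, does not close.
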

\begin{proof}
For Claim (1), observe from the defining graph of type~$\XX$ that the fibration $|2F_1|$ has a second half-fiber \(F_1'\) of type~$\III$ or $\I_2$, and a third reducible fiber:
\[
 \begin{tikzpicture}[scale=0.6]
    \node (R4) at (0,0) [nodal] {};
    \node (R5) at (1,1) [nodal] {};
    \node (R6) at (1,0) [nodal] {};
    \node (R7) at (2,0) [nodal] {};
    \node (R8) at (3,0) [nodal,fill=white] {};
    \node (R9) at (4,0) [nodal] {};
    \node (R3) at (-1,0) [nodal] {};
    \node (R2) at (-2,0) [nodal] {};
    \node (R1) at (3,1) [nodal] {};
    \node (RX) at (-1,1) [nodal] {};
    \node (RXX) at (5,0) [nodal] {};
    \draw (R7)--(R8)--(R9) (R8)--(R1);
    \draw[densely dashed, very thick] (R2)--(R7) (R5)--(R6) (RX)--(R3);
    \draw[densely dashed, very thick,double] (R9)--(RXX);
\end{tikzpicture}
\]
In particular, this fibration is extremal. As $X$ admits an additive half-fiber, we are in characteristic $p = 2$ by \Cref{lem: halffibers}, and so \(F_1'\) must be of type~$\III$ .
By \Cref{tab:elliptic}, there exists no extremal rational elliptic fibration with these fibers if $p = 2$, so $f$ must be quasi-elliptic.
Conversely, if an Enriques surface admits a genus $1$ fibration with the given fiber types, then $p = 2$ and the fibration must be quasi-elliptic by \cite[Section 4]{LangConfigs}, so the dual graph of components of fibers and curve of cusps contains the graph of type~$\XX$.

Claim (2) follows from Claim (1) and \Cref{lem: genus.1.fibrations}.

For Claim (3), note that with the alternative description given in Claim (1), surfaces of type~$\XX$ have first been constructed in \cite[Example 7.9]{DolgachevMartin} and the conjectural number of moduli for these surfaces given in \cite[Corollary 7.8]{DolgachevMartin} has recently been confirmed to be $2$ in \cite[Proposition 14.1]{Katsura.Schuett}.

Claim (4) follows from \cite[Table 6]{Katsura.Kondo.Martin}.

For Claim (5), we combine \Cref{cor: actiononsimplefiber} and \Cref{lem: action.on.additive} with \Cref{tab:quasi-elliptic} to deduce that there exists a unique non-trivial $\sigma \in \MW(|2F_1|)$ that preserves all curves in the defining graph of $X$. 
The curves in the graph generate $\Num(X)$ over $\mathbb{Q}$, hence $\sigma$ is numerically trivial. Conversely, if $\sigma'$ is a numerically trivial automorphism of~$X$, then it preserves $f$ and acts trivially on the base, since it preserves the three reducible fibers. If $\sigma'$ had odd order, then, by the known structure of fixed loci of automorphisms of cuspidal curves, the fixed locus of $\sigma'$ would contain an integral curve that has intersection number $1$ with every fiber, which is absurd. Hence, $\sigma'$ has even order, so it must come from $\MW(|2F_1|)$, and so $\sigma' = \sigma$.


Claim (6) follows from the classification of Enriques surfaces with finite automorphism group given in \cite{Katsura.Kondo.Martin}.
\end{proof}

\begin{lemma} \label{lem: xxfixedlocus}
Let $X$ be an Enriques surface of type~$\XX$ and let $\sigma$ be its non-trivial numerically trivial involution. Then, the support of the union of the conductrix of $X$ and the divisorial part of the fixed locus of $\sigma$ forms a configuration \(G_0\) of type~$\tilde{E}_7$:
\[
 \begin{tikzpicture}[scale=0.6]
    \node (R4) at (0,0) [nodal] {};
    \node (R5) at (1,1) [nodal] {};
    \node (R6) at (1,0) [nodal] {};
    \node (R7) at (2,0) [nodal] {};
    \node (R8) at (3,0) [nodal] {};
    \node (R9) at (4,0) [nodal] {};
    \node (R3) at (-1,0) [nodal] {};
    \node (R2) at (-2,0) [nodal] {};
    \node (R1) at (3,1) [nodal,fill=white] {};
    \node (RX) at (-1,1) [nodal,fill=white] {};
    \node (RXX) at (5,0) [nodal,fill=white] {};
    \draw (R1)--(R8) (R3)--(RX);
    \draw[very thick] (R2)--(R9) (R5)--(R6);
    \draw[double] (R9)--(RXX);
\end{tikzpicture}
\]
\end{lemma}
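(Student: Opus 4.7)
The plan is to compute the divisorial part of the fixed locus of $\sigma$ explicitly and then combine it with the conductrix already determined in Lemma 3.7(4). Since the conductrix is supported on $\{R_3,R_4,R_5,R_6,R_7,R_8\}$, it suffices to show that the divisorial fixed locus of $\sigma$ is supported on a set that contains $R_2$ and $R_9$ and is contained in $\{R_2,R_3,R_4,R_5,R_6,R_7,R_8,R_9\}$, and then to verify the $\tilde{E}_7$ combinatorics on these eight curves.

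First I would use Lemma 3.3(2) together with the proof of Lemma 3.7(5) to conclude that $\sigma$ preserves every irreducible component of every reducible fiber of $f = |2F_1|$. Since $\sigma$ has order $2$ and we are in characteristic $2$, the restriction of $\sigma$ to each such component $C \cong \mathbb{P}^1$ is either the identity or an involution with a unique fixed point. For the three double components $R_3, R_4, R_6$ of $F_1 = I_2^*$, each meets at least two other preserved components in points that must be fixed, so $\sigma|_{R_i} = \mathrm{id}$ for $i \in \{3,4,6\}$. Furthermore, since $f$ is quasi-elliptic, its curve of cusps is a $(-2)$-curve that is pointwise fixed by any translation in $\MW(f)$, and an intersection computation using $R.F_1 = R.F_1' = 1$ identifies this curve of cusps with $R_8$ in our labeling. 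In particular $R_8 \subset X_1^\sigma$.

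Next I would deduce that $R_9$ lies in the divisorial fixed locus: it meets both the pointwise fixed curve $R_8$ and the tangential node $R_9 \cap R_{XX}$ of the $III$-fiber $F_1'$ (this latter point is fixed because $\sigma$ preserves each of $R_9$ and $R_{XX}$), so $\sigma|_{R_9}$ has at least two fixed points and hence is trivial. The analogous statement for $R_2$ is more delicate: only the intersection $R_2 \cap R_3$ is manifestly fixed in the defining graph. To conclude that $\sigma|_{R_2} = \mathrm{id}$ I would employ a lattice-theoretic argument in the spirit of the proof of Lemma 3.3(2), examining the coinvariant sublattice of $\sigma$ inside $\Num(X)$ and using that it must be $2$-elementary: if $\sigma$ were to act nontrivially on $R_2$, the resulting configuration of pointwise fixed curves would contribute a negative-definite sublattice to the coinvariant that has no $2$-elementary overlattice. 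The same lattice constraint rules out $R_X, R_5, R_7, R_1, R_{XX}$ from being in the divisorial fixed locus, so the divisorial fixed locus has support precisely in $\{R_2, R_3, R_4, R_6, R_8, R_9\}$.

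Taking supports, the union of the conductrix and the divisorial fixed locus is supported on $\{R_2,R_3,R_4,R_5,R_6,R_7,R_8,R_9\}$. Reading the intersections off the defining graph gives the chain $R_2 - R_3 - R_4 - R_6 - R_7 - R_8 - R_9$ of length seven with the branch $R_5$ attached at the fourth vertex $R_6$, which is exactly the Dynkin diagram of type $\tilde{E}_7$. The hard part is the asymmetric argument that pins down $R_2$ (as opposed to $R_X$) among the simple components of the $I_2^*$ half-fiber: the local analysis near $R_2$ only produces one forced fixed point, so one genuinely needs the lattice-theoretic input to upgrade this to pointwise triviality.
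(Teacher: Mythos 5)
There is a genuine gap at the heart of your argument. The curves $R_3,R_4,R_6,R_7,R_8,R_9$ are indeed fixed pointwise by the elementary two-fixed-points argument (note that this already forces $R_7\subseteq X^\sigma$, since $R_7$ meets the preserved curves $R_6$ and $R_8$ in distinct points — so your later claim that the lattice constraint ``rules out $R_7$'' proves something false). But the crucial steps — showing $\sigma\vert_{R_2}=\mathrm{id}$ and excluding $R_1$, $R_X$, $R_{XX}$ and every curve outside the defining graph from $X^\sigma$ — are delegated to a coinvariant-lattice argument that cannot work here: $\sigma$ is \emph{numerically trivial}, so $\Num(X)^\sigma=\Num(X)$ and the coinvariant lattice $\Num(X)_\sigma$ is zero. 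The $2$-elementarity constraint used in the proof of \Cref{lem: action.on.additive} detects permutations of fiber components, i.e.\ the action on $\Num(X)$; it is structurally incapable of distinguishing ``$\sigma$ preserves $R_2$ with one fixed point'' from ``$\sigma$ fixes $R_2$ pointwise,'' which is exactly the dichotomy you need to resolve.

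The paper closes this gap with two different inputs. For $R_2$: since $\sigma$ is the unique non-trivial numerically trivial involution, it is central in $\Aut(X)$, so $X^\sigma$ is stable under the order-two element of $\MW(|2F_2|)$ for the $\I_4^*$-fibration visible in the graph; by \Cref{cor: actiononsimplefiber} this element acts as the reflection interchanging $R_9$ with $R_2$ (and $R_1$ with $R_X$), so pointwise-fixedness is transported from $R_9$ to $R_2$. For the exclusions: $\sigma\in\MW(|2F_1|)$ confines $X^\sigma$ to the curve of cusps and the fibers of $|2F_1|$, and the disjointness of the sections of its Jacobian (Ito) forces every fixed point on a \emph{simple} fiber to lie on the curve of cusps, ruling out $R_1$; the same commuting reflection then rules out $R_X$ and $R_{XX}$. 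You would need arguments of this kind (or some other genuinely geometric input) in place of the lattice computation; as written, the proposal does not establish the two statements on which the $\tilde E_7$ configuration depends.
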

\begin{proof}
Since $\sigma$ is numerically trivial, it preserves all curves that appear in the defining graph of $X$. Since non-trivial involutions on $\mathbb{P}^1$ in characteristic $2$ have a unique fixed point, we deduce that $\sigma$ fixes pointwise the curves corresponding to the black vertices in the following graph:

\begin{center}
    \begin{tabular}{cc}
 \begin{tikzpicture}[scale=0.6]
    \node (R4) at (0,0) [nodal] {};
    \node (R5) at (1,1) [nodal,fill=white] {};
    \node (R6) at (1,0) [nodal] {};
    \node (R7) at (2,0) [nodal] {};
    \node (R8) at (3,0) [nodal, label=below:$R$] {};
    \node (R9) at (4,0) [nodal, label=below:$C_1$] {};
    \node (R3) at (-1,0) [nodal] {};
    \node (R2) at (-2,0) [nodal,fill=white, label=below:$C_1'$] {};
    \node (R1) at (3,1) [nodal,fill=white, label=right:$C_2$] {};
    \node (RX) at (-1,1) [nodal,fill=white, label=left:$C_2'$] {};
    \node (RXX) at (5,0) [nodal,fill=white] {};
\draw (R2)--(R3)--(R6) (R5)--(R6)--(R9) (R1)--(R8) (R3)--(RX);
\draw[double] (R9)--(RXX);
\end{tikzpicture}
    \end{tabular}
    \end{center}

By \Cref{cor: actiononsimplefiber}, the Mordell--Weil group of the genus~\(1\) fibration $|2F_2|$ with fiber of type~$\I_4^*$ in the defining diagram induces a horizontal reflection on the graph obtained by removing the right-most vertex, so we may assume without loss of generality that $C_1$ and $C_1'$ resp. $C_2$ and $C_2'$ are interchanged by this involution. Since $\sigma$ is the only non-trivial numerically trivial automorphism, the whole automorphism group $\Aut(X)$ commutes with $\sigma$, and hence preserves the fixed locus of $\sigma$. We conclude that, as $\sigma$ fixes $C_1$ pointwise, it must also fix $C_1'$ pointwise.

It remains to show that the remaining components of the fixed locus of $\sigma$ lie in the conductrix. Since \(\sigma \in \MW(|2F_1|)\) by \Cref{lem: xxgeneralities}, the fixed locus of $\sigma$ is contained in the union of the curve of cusps $R$ and fibers of~\(|2F_1|\). By \cite{Ito:char2}, the sections of the Jacobian of~\(|2F_1|\) are disjoint, hence by \Cref{cor: actiononsimplefiber}, every fixed point of $\sigma$ on a simple fiber of~\(|2F_1|\) lies on $R$. We conclude that $C_2$ is not in $X^{\sigma}$, so neither is $C_2'$. Finally, because $X^{\sigma}$ is stable under the involution in $\MW(|2F_2|)$ described in the previous paragraph, the right-most vertex of the defining graph of~$X$ is also not in $X^{\sigma}$. This finishes the proof.
\end{proof}

\begin{proposition} \label{prop: xximportantproperties}
Let $X$ be an Enriques surface of type~$\XX$ and let $G_0$ the configuration of type~$\tilde{E}_7$ on~\(X\) described in \Cref{lem: xxfixedlocus}. Let $F_0$ be a half-fiber with $G_0 \in |2F_0|$ and let $\sigma \in \Aut(X)$ be the numerically trivial involution.
Then, the following hold:
\begin{enumerate}
    \item The fiber $G_0$ is preserved by all of $\Aut(X)$.
    \item The involution $\sigma$ exchanges the two half-fibers of $|2F_0|$.
    \item The fiber $G_0\in |2F_0|$ is simple and the only reducible fiber of $|2F_0|$.
    \item The fibration $|2F_0|$ is elliptic and not isotrivial with singular fibers of type~$\III^*,\I_1,\I_1$.
\end{enumerate}
\end{proposition}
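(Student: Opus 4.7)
I would address the four parts in the order (1), then (3) and (4) together, then (2), since the structure of $|2F_0|$ established in (3) and (4) is used in (2).

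Part (1) follows because both constituents of $G_0$ are stable under $\Aut(X)$: the conductrix is the divisorial zero scheme of any global $1$-form and $\Aut(X)$ acts by scalars on the one-dimensional space $H^0(X, \Omega^1_X)$; the divisorial fixed locus of $\sigma$ is stable because $\sigma$ is the unique non-trivial numerically trivial involution of $X$ by \Cref{lem: xxgeneralities}~(5) and therefore central in $\Aut(X)$.

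For (3) and (4), the $\tilde E_7$ dual graph with the Kodaira multiplicities $(1,2,3,4,3,2,1,2)$ of a $\III^*$ fiber is connected, nef, and of self-intersection zero, so it is a fiber of a genus~$1$ pencil $|2F_0|$. Because these multiplicities include odd numbers, $G_0 \ne 2F'$ for any effective $F'$, hence $G_0$ is a simple fiber, proving the first half of (3). A quasi-elliptic $|2F_0|$ is excluded by \Cref{tab:quasi-elliptic}, which shows that any quasi-elliptic rational fibration containing a $\III^*$ fiber carries a second reducible fiber of type $\III$, contradicting uniqueness of $G_0$. A second reducible fiber in the elliptic case is ruled out as follows: by \Cref{tab:elliptic} it would make the Jacobian extremal and thus $\Aut_{\IP^1}(X) \cong \MW(J(|2F_0|)) \rtimes \IZ/2$ finite, forcing $\Aut(X)$ itself to be finite (as its image in $\Aut(\IP^1)$ preserves the finite configuration of singular and half-fibers, hence is finite), and contradicting \Cref{lem: xxgeneralities}~(6). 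Thus $|2F_0|$ is a non-extremal elliptic fibration with sole reducible fiber $G_0$ of type $\III^*$, and Lang's classification of rational elliptic surfaces with this structure, together with compatibility constraints on additive fibers of a classical Enriques surface in characteristic~$2$, leaves the singular fiber list $\III^*, \I_1, \I_1$; the two nodal fibers then ensure non-isotriviality.

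For (2), the involution $\sigma$ preserves $|2F_0|$ since it is numerically trivial, and it fixes the point $[G_0] \in \IP^1$ by (1). In characteristic~$2$, a non-trivial involution of $\IP^1$ has a unique fixed point, so either $\sigma$ acts trivially on the base of $|2F_0|$ or it acts as an involution with $[G_0]$ as its unique fixed point; the latter forces the swap of $[2F_0]$ and $[2F_0']$, as desired. To exclude the trivial-action alternative, assume $\sigma^* F_0 = F_0$: then $\sigma|_{F_0}$ is an involution of the smooth ordinary elliptic curve $F_0$ (by (4)); it cannot be the identity, for then $F_0$ would belong to the divisorial fixed locus of $\sigma$, contradicting \Cref{lem: xxfixedlocus}, so it must be a non-zero $2$-torsion translation acting without fixed points on $F_0$. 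Since $\sigma \in \MW(|2F_1|)$ also acts by a non-zero translation on the additive smooth locus of each cuspidal fiber of $|2F_1|$, an incompatibility along $F_0 \cap F_1$ (with $F_0 \cdot F_1 = 1$, a direct computation from the defining dual graph using $2F_0 \sim G_0$) yields the contradiction. The main technical obstacle will be this final incompatibility; equivalently, the task is to show that $\sigma$ is not cohomologically trivial, that is, $\sigma^*F_0 - F_0 = K_X$ in $\Pic(X)$.
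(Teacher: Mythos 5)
There are two genuine gaps, both located exactly where you deviate from the order of the paper's argument (the paper proves (2) before (3) and uses it there).

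First, your argument that $G_0$ is a \emph{simple} fiber does not work. You assign to the $\tilde E_7$ configuration the Kodaira multiplicities $(1,2,3,4,3,2,1,2)$ and conclude from their oddness that $G_0\ne 2F'$; but those multiplicities are only known to be the multiplicities of the fiber if the fiber is already simple. The alternative to be excluded is that the divisor $D=\sum m_iC_i$ with the standard $\III^*$ multiplicities is itself a \emph{half-fiber} (it is connected, nef, isotropic and primitive, since $\gcd(m_i)=1$), in which case the fiber supported on the configuration is the double fiber $2D$. By \Cref{lem: halffibers}, additive half-fibers of type $\III^*$ do occur on classical Enriques surfaces in characteristic $2$, so this is a real possibility and your multiplicity argument begs the question. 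The paper rules it out precisely by first proving (2): $\sigma$ preserves $G_0$ but exchanges the two half-fibers of $|2F_0|$, so $G_0$ cannot be a half-fiber. (The paper also derives the uniqueness of the reducible fiber from (2), since $\sigma$ preserving two reducible fibers would fix the base; your alternative route via extremality and finiteness of $\Aut(X)$ is plausible but the parenthetical ``preserves the finite configuration of singular and half-fibers, hence is finite'' is not automatic when there are only two marked points on $\IP^1$.)

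Second, your proof of (2) is incomplete by your own admission: after reducing to the case where $\sigma$ fixes the base and acts on the generic fiber as a fixed-point-free $2$-torsion translation, the decisive ``incompatibility along $F_0\cap F_1$'' is never established, and the reformulation via cohomological non-triviality of $\sigma$ restates the claim rather than proving it. The paper closes this case differently: by Oguiso--Shioda, $\MW(|2F_0|)$ is torsion-free, so $\sigma$ restricted to the generic fiber cannot be a translation at all; hence it has fixed points there, producing a horizontal component of the divisorial fixed locus of $\sigma$, which contradicts \Cref{lem: xxfixedlocus} (the fixed divisor is contained in the single fiber $G_0$). Your part (1) matches the paper, and your part (4) (Lang's classification plus the ordinary smooth half-fiber forced by \Cref{lem: halffibers} on a classical surface) is essentially the paper's argument, but it rests on (3), which as written is not established.
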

\begin{proof}

For Claim (1), recall that by \Cref{lem: xxfixedlocus}, the support of the fiber $G_0\in |2F_0|$ is the union of the conductrix of $X$ and the divisorial part of the fixed locus of the numerically trivial involution of $X$, hence it is preserved by the whole $\Aut(X)$.

For Claim (2), assume by contradiction that $\sigma$ preserves the two half-fibers $F_0$, $F_0'$ of $|2F_0|$. Then, $\sigma$ fixes the base of $|2F_0|$, hence it induces an involution on the generic fiber $(F_0)_{\eta}$ of $|2F_0|$. By \cite{OguisoShioda}, $\MW(|2F_0|)$ is torsion-free, hence $\sigma$ is not a translation, so it has fixed points on $(F_0)_{\eta}$. This would produce an irreducible component of the fixed locus of $\sigma$ not contained in $G_0$, contradicting \Cref{lem: xxfixedlocus}.

For Claim (3), let $\sigma$ be the numerically trivial involution. By Claims (1) and (2) we have $\sigma(G_0)=G_0$ and $\sigma(F_0)=F_0'$, so $G_0$ is not a half-fiber. Moreover, if $|2F_0|$ has another reducible fiber $G_0'$, then $G_0$ and $G_0'$ are the only reducible fibers of $|2F_0|$ (otherwise the rank of the lattice spanned by their components would be too big). Then, $\sigma$ preserves $G_0$ and $G_0'$, hence it fixes the base of $|2F_0|$, contradicting the fact that $\sigma$ exchanges the two half-fibers of $|2F_0|$.
 
 For Claim (4), first note that $|2F_0|$ is not extremal by Claim (3), hence it is elliptic. By \cite{LangConfigs}, the singular fibers of~\(|2F_0|\) are either $\III^*,\I_1,\I_1$, or $G_0$ is the unique singular fiber of~\(|2F_0|\) and $|2F_0|$ is isotrivial with $j$-invariant $0$. Since $X$ is classical, \Cref{lem: halffibers} implies that $|2F_0|$ admits a smooth ordinary elliptic curve as half-fiber, so the latter case cannot occur.
\end{proof}

 We note the following immediate consequence of \Cref{prop: xximportantproperties} and \Cref{prop:Enriques.zero.entropy}.

\begin{theorem} \label{thm:XX}
Every Enriques surface of type~$\XX$ has zero entropy. More precisely, \(|2F_0|\) is the unique non-extremal genus \(1\) fibration on \(X\).
\end{theorem}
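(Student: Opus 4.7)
The plan is to derive this theorem directly from the results already assembled, since the hard work has been done in \Cref{prop: xximportantproperties} and \Cref{lem: xxgeneralities}. The strategy is to verify condition (5) of \Cref{prop:Enriques.zero.entropy} and then invoke that proposition to obtain both zero entropy and uniqueness of the non-extremal fibration.

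First, I will observe that \Cref{lem: xxgeneralities}(6) guarantees $\Aut(X)$ is infinite, so \Cref{prop:Enriques.zero.entropy} is applicable. Next, \Cref{prop: xximportantproperties}(1) states that the fiber $G_0$ is preserved by every element of $\Aut(X)$. Since $G_0$ lies in the linear system $|2F_0|$, this system (and hence the induced genus~\(1\) fibration) is preserved by $\Aut(X)$. This is exactly condition (5) of \Cref{prop:Enriques.zero.entropy}, so the implication (5) $\Rightarrow$ (1) immediately yields that $X$ has zero entropy.

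For the uniqueness statement, I need to check that $|2F_0|$ itself is non-extremal; once this is done, the implication (1) $\Rightarrow$ (4) of \Cref{prop:Enriques.zero.entropy} shows that it is the unique non-extremal genus~\(1\) fibration. By \Cref{prop: xximportantproperties}(4), $|2F_0|$ is elliptic with singular fibers of type~$\III^*,\I_1,\I_1$. Applying the Shioda--Tate formula to the Jacobian $J(|2F_0|)$, which is a rational elliptic surface, the Mordell--Weil rank equals $8 - 7 = 1$, so $\MW(|2F_0|)$ is infinite and $|2F_0|$ is non-extremal as required.

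The only potential subtlety is that the equivalence in \Cref{prop:Enriques.zero.entropy} requires $\Aut(X)$ to be infinite, but this is exactly \Cref{lem: xxgeneralities}(6). Thus the proof reduces to stringing together these observations, and there is no genuine obstacle; the substantive content has already been placed into the earlier preparatory lemmas and propositions.
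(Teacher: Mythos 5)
Your proof is correct and follows the paper's intended argument exactly: the paper presents Theorem~\ref{thm:XX} as an immediate consequence of Proposition~\ref{prop: xximportantproperties} and Proposition~\ref{prop:Enriques.zero.entropy}, precisely via condition (5) of the latter, and your verification that $|2F_0|$ is non-extremal (Mordell--Weil rank $1$ from the single reducible fiber of type $\III^*$) correctly supplies the detail needed to conclude that it is \emph{the} unique non-extremal fibration.
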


We are also able to describe the structure of the automorphism group of Enriques surfaces of type~$\XX$.

\begin{proposition} \label{thm:XX.automorphism.group}
If $X$ is an Enriques surface of type~$\XX$, then 
\[
    \Aut(X) \cong \IZ/2\IZ \times D_\infty.
\]
\end{proposition}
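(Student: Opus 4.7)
The plan is to combine the structure of the fibration-preserving subgroup associated to $|2F_0|$ with an analysis of the induced $\Aut(X)$-action on the base $\IP^1$, exploiting that the numerically trivial involution $\sigma$ is central in $\Aut(X)$ and acts non-trivially on this base. By \Cref{thm:XX}, $|2F_0|$ is the unique non-extremal genus~\(1\) fibration on~\(X\), hence preserved by all of $\Aut(X)$, and it is elliptic and non-isotrivial with singular fibers of type $\III^*, \I_1, \I_1$. \Cref{lem: sign involution} then gives $\Aut_{\IP^1}(X) \cong \MW(|2F_0|) \rtimes \IZ/2\IZ$, and by the classification of Mordell--Weil groups of rational elliptic surfaces \cite{OguisoShioda}, the fiber configuration $(\III^*, \I_1, \I_1)$ forces $\MW(|2F_0|) \cong \IZ$ (rank one, torsion-free). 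Therefore $\Aut_{\IP^1}(X) \cong D_\infty$.

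Next I will analyse the induced homomorphism $\rho \colon \Aut(X) \to \mathrm{PGL}_2$. By \Cref{prop: xximportantproperties}, $\rho(\Aut(X))$ fixes the point $p_0$ lying under $G_0$ and permutes both the pair $\{p_{F_0}, p_{F_0'}\}$ of points under the two half-fibers and the pair $\{p_1, p_2\}$ of points under the two nodal fibers. Working in the affine stabiliser $\mathrm{Stab}(p_0) \subset \mathrm{PGL}_2$, the only non-trivial affine transformation preserving a given pair of distinct points $\{p,q\}$ is the reflection $x \mapsto (p+q) - x$ through their midpoint, so an element of $\mathrm{Stab}(p_0)$ preserving both pairs simultaneously must be either the identity or the unique reflection whose centre coincides with both midpoints. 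This forces $|\rho(\Aut(X))| \leq 2$, and since $\sigma$ exchanges the two half-fibers by \Cref{prop: xximportantproperties}(2), $\rho(\sigma)$ is non-trivial; hence $\rho(\Aut(X)) = \langle \rho(\sigma) \rangle \cong \IZ/2\IZ$.

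Finally, by the uniqueness of the non-trivial numerically trivial involution in \Cref{lem: xxgeneralities}(5), every conjugate $g\sigma g^{-1}$ is again such an involution and therefore equals $\sigma$; thus $\sigma$ lies in the centre of $\Aut(X)$. The short exact sequence
\[
1 \to \Aut_{\IP^1}(X) \to \Aut(X) \xrightarrow{\rho} \IZ/2\IZ \to 1
\]
is split by $\sigma \notin \Aut_{\IP^1}(X)$, and the centrality of $\sigma$ upgrades this splitting to a direct product decomposition $\Aut(X) \cong D_\infty \times \IZ/2\IZ$. The step I expect to be the main obstacle is the bound $|\rho(\Aut(X))| \leq 2$: one must rule out affine transformations that swap only one of the two pairs, which relies on $p_0$ being a common fixed point (so that the analysis reduces to the affine stabiliser rather than all of $\mathrm{PGL}_2$) and on the distinctness of the two points within each pair.
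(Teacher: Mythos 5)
Your proof is correct and follows essentially the same route as the paper: identify $\Aut_{\IP^1}(X)\cong\MW(|2F_0|)\rtimes\IZ/2\IZ\cong D_\infty$ via \Cref{lem: sign involution} and \cite{OguisoShioda}, show the image in $\mathrm{PGL}_2$ is $\IZ/2\IZ$ generated by $\rho(\sigma)$ using \Cref{prop: xximportantproperties}, and use centrality of $\sigma$ to split off the direct factor. The only cosmetic caveat is that the base field has characteristic $2$, so the non-trivial element of the affine stabiliser swapping a pair $\{p,q\}$ is the translation $x\mapsto x+(p+q)$ rather than a ``reflection through the midpoint''; the uniqueness of this element, and hence your bound $|\rho(\Aut(X))|\le 2$, still holds verbatim.
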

\begin{proof}
The subgroup $\Aut_{\IP^1}(X)$ of $\Aut(X)$ acting trivially on the base of $|2F_0|$ is isomorphic to $D_{\infty}$ by \Cref{lem: sign involution}, since $|2F_0|$ is elliptic and not isotrivial by \Cref{prop: xximportantproperties} and $\MW(|2F_0|) \cong \mathbb{Z}$ by \cite{OguisoShioda}.
By \Cref{prop: xximportantproperties} (2) and (4), the image of $\Aut(X) \to {\rm PGL}_2$ is generated by the numerically trivial involution $\sigma$. Since $\sigma$ is central in $\Aut(X)$, this yields the claim. 
\end{proof}

\section{Classification} \label{sec: Classification}

The aim of this section is to prove \Cref{thm: main}. Our strategy can be summarized as follows. By \Cref{prop:Enriques.zero.entropy}, there exists a unique genus $1$ fibration $|2F_0|$ with infinite Mordell--Weil group on $X$. Thus, $|2F_0|$ is preserved by all automorphisms of $X$ and hence, in particular, by the Mordell--Weil groups of the other genus $1$ fibrations on $X$. It turns out that this puts heavy restrictions on the Mordell--Weil groups that appear, eventually leading to the dual graphs of \Cref{thm: main}.

\begin{proposition} \label{prop: alternative}
Let $X$ be an Enriques surface with two genus~\(1\) fibrations $|2F_0|$ and $|2F_1|$ such that $F_0.F_1 = 1$. Suppose that $\MW(|2F_1|)$ preserves $|2F_0|$, and that $|2F_0|$ or $|2F_1|$ is elliptic. Then, $|2F_1|$ is extremal with reducible fibers
\[
    (\II^*), \; (\I_4^*), \; (\III^*,\I_2), \; (\III^*,\III), \; (\I_0^*,\I_0^*), \; (\I_2^*,\III,\III) \; \text{or} \; (\I_2^*,\I_2,\I_2).
\]
In particular, $\MW(|2F_1|) \cong (\mathbb{Z}/2\mathbb{Z})^a$ with $a \leq 2$.
\end{proposition}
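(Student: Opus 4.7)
My plan is to show that $|\MW(|2F_1|)| \le 4$ and that $\MW(|2F_1|)$ is $2$-elementary; extremality of $|2F_1|$ then follows, and comparison with Tables \ref{tab:elliptic} and \ref{tab:quasi-elliptic} restricted to extremal configurations with Mordell--Weil group embedded in $(\mathbb{Z}/2\mathbb{Z})^2$ produces the seven listed configurations.

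Since $F_0 \cdot F_1 = 1$, the intersection $F_0 \cap F_1 = \{P\}$ is a single transverse point, smooth on both curves. Every $g \in \MW(|2F_1|)$ preserves $F_1$ (it acts trivially on the base of $f_1$) and by hypothesis permutes the at most two half-fibers $\{F_0, F_0'\}$ of $|2F_0|$. Let $H$ be the kernel of the resulting map to $\mathrm{Sym}(\{F_0,F_0'\})$, so $[\MW(|2F_1|):H] \le 2$.

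For $g \in H$, $g$ fixes $P$ and preserves the irreducible component $C_0$ of $F_1$ through $P$. Combining \Cref{lem: MWonJAC} with \Cref{lem: action.on.In} and \Cref{lem: action.on.additive}, the action of $g$ on the dual graph of $F_1$ is trivial---because the Mordell--Weil group preserves every component in the additive quasi-elliptic case, because an $\I_n$-rotation preserving $C_0$ is trivial, or because the half-fiber is irreducible---and on each component's smooth locus $g$ acts by translation in the appropriate algebraic group ($\mathbb{G}_m$, $\mathbb{G}_a$, or an elliptic curve). Since translation fixing $P \in C_0$ is the identity, $g|_{C_0} = \mathrm{id}$; propagating triviality through the nodes of $F_1$ (each one a fixed point of $g$ on the adjacent component) yields $g|_{F_1} = \mathrm{id}$. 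As $f_0|_{F_1}\colon F_1 \to \mathbb{P}^1$ is surjective, $g$ is trivial on the base of $f_0$, hence $g \in \Aut_{\mathbb{P}^1}(X)$ for $f_0$.

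If $f_0$ is elliptic, \Cref{lem: sign involution} gives $\Aut_{\mathbb{P}^1}(X) \cong \MW(J(f_0)) \rtimes \mathbb{Z}/2\mathbb{Z}$: a translation fixing $F_1$ pointwise must be the identity (it fixes two points on a general $f_0$-fiber), and by the uniqueness clause of \Cref{lem: sign involution} at most one non-translation involution fixes $F_1$ pointwise. If instead $f_0$ is quasi-elliptic (so $f_1$ is elliptic by hypothesis), $g$ restricts on a general cuspidal $f_0$-fiber to an automorphism fixing three distinct points---the cusp and the two smooth points of $F_1 \cap \text{(fiber)}$---hence is the identity. In both cases $|H| \le 2$, so $|\MW(|2F_1|)| \le 4$. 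Finally, an element of order $4$ would lie outside $H$ and by \Cref{lem: action.on.In} act on the dual graph of the $\I_n$ half-fiber $F_1$ as a rotation of order $2$, swapping the component $C_0$ through $P$ with the opposite component $C_{n/2}$; combined with $F_0 \cdot F_0' = 0$ (so that $P \neq P' := g(P)$) and the constraints on the positions of $P, P'$ coming from the bisection structure, this rules out $\mathbb{Z}/4\mathbb{Z}$. Therefore $\MW(|2F_1|) \cong (\mathbb{Z}/2\mathbb{Z})^a$ with $a \le 2$, and inspection of Tables \ref{tab:elliptic} and \ref{tab:quasi-elliptic} yields the seven listed configurations.

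The most delicate points are the propagation argument giving $g|_{F_1} = \mathrm{id}$ when $F_1$ is singular---which relies essentially on \Cref{lem: action.on.In} and \Cref{lem: action.on.additive}, especially in characteristic $2$---and the exclusion of $\mathbb{Z}/4\mathbb{Z}$, which requires matching the rotation action on the half-fiber with the intersection pattern of the two half-fibers of $|2F_0|$ on $F_1$.
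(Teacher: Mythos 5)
Your architecture is genuinely different from the paper's: you try to bound $\MW(|2F_1|)$ directly to $(\IZ/2\IZ)^a$, $a\le 2$, by analyzing its action on the half-fiber $F_1$ and then on the base of $|2F_0|$, whereas the paper first gets finiteness for free from lattice theory (the classes of $F_0,F_1$ span a copy of $U$, so $\MW(|2F_1|)$ acts on $\Num(X)$ through the finite group $\mathrm{O}(E_8)$ up to the finite kernel of $\Aut(X)\to\mathrm{O}(\Num(X))$) and then restricts the fiber types via the $\MW(|2F_1|)$-orbits of the components of each reducible fiber $G_1$ that meet $F_0$ (at most two simple components, or one double component, and this set is $\MW$-stable). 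Your route has a genuine gap already in the core step ``$g\in H$ acts trivially on the dual graph of $F_1$ and by translation on each component, hence $g|_{F_1}=\mathrm{id}$'': your three cases omit the case where $|2F_1|$ is \emph{elliptic} and $F_1$ is a \emph{reducible additive} half-fiber, which occurs in characteristic $2$ for classical and supersingular surfaces by \Cref{lem: halffibers} and does arise in the intended application (e.g.\ half-fibers of type $\III^*$ or $\IV^*$). Neither \Cref{lem: action.on.In} nor \Cref{lem: action.on.additive} covers this case, and the blanket claim that $\MW$ acts ``by translation in the appropriate algebraic group'' on the components of a \emph{half}-fiber is exactly the delicate point the paper warns about after \Cref{cor: actiononsimplefiber}; even for multiplicative half-fibers, \Cref{lem: action.on.In} only controls the dual-graph action, not the action on individual components.

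The second, more serious gap is the exclusion of $\IZ/4\IZ$. The configurations your bound $|\MW|\le 4$ does not kill are precisely $(\I_8,\III)$, $(\I_8,\I_2)$ and $(\I_1^*,\I_4)$, all with $\MW\cong\IZ/4\IZ$, so $2$-elementarity is essential. Your argument presupposes that $F_1$ is of type~$\I_n$, so that an order-$4$ element is visible as a rotation of the dual graph; but for these configurations the half-fibers of $|2F_1|$ may well be smooth or of type~$\I_1$ (e.g.\ for $\I_1^*+\I_4+\I_1$ in odd characteristic both half-fibers can be smooth), and then there is no dual-graph obstruction on $F_1$ at all. The ``constraints on the positions of $P,P'$'' are never made precise, and I do not see how to derive a contradiction from them. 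The paper gets this restriction from the reducible fibers $G_1$ themselves: a rotation of order $4$ on a simple $\I_8$, or the transitive $\IZ/4$-action on the four simple components of $\I_1^*$ or $\I_4$, has no orbit of length $\le 2$, which is incompatible with the $\MW$-stable set of components meeting $F_0$; \Cref{lem: action.on.In}(3) handles the double-fiber variants. Without an argument of this kind your proof does not establish $2$-elementarity, hence not the list of seven configurations. (Two smaller issues: \Cref{lem: sign involution} is stated only for non-isotrivial elliptic fibrations, so the isotrivial case of $f_0$ needs separate treatment; and in characteristic $2$ the bisection $F_1\to\mathbb{P}^1$ may be inseparable, so ``two smooth points of $F_1\cap(\text{fiber})$'' can degenerate to one.)
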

\begin{proof}
Since $\MW(|2F_1|)$ preserves the numerical classes of $F_0$ and $F_1$, it acts on $\Num(X) \cong U \oplus E_8$ through the finite group ${\rm O}(E_8)$. Since the kernel of $\Aut(X) \to {\rm O}(\Num(X))$ is finite by \cite[Proposition~2.1]{DolgachevMartinNUM}, we conclude that $\MW(|2F_1|)$ is finite, that is, $|2F_1|$ is extremal.

Let $G_1 \in |2F_1|$ be a reducible fiber. Since $F_0.F_1 = 1$, there exist either at most two simple components of $G_1$ or one double component of $G_1$ that meets $F_0$. Since $\MW(|2F_1|)$ preserves the numerical classes of $F_0$ and $F_1$, the set of such components is preserved by $\MW(|2F_1|)$. 


If $G_1$ is simple or multiplicative, we understand the action of $\MW(|2F_1|)$ on~$G_1$ by \Cref{lem: MWonJAC,lem: action.on.In}, and \Cref{tab:elliptic,tab:quasi-elliptic}. 
Combining this with the previous paragraph, we see that either a simple component of $G_1$ has a $\MW(|2F_1|)$-orbit of length $\le 2$, or a double component of $G_1$ has a trivial $\MW(|2F_1|)$-orbit. 
In particular, $G_1$ must be of type~$\II^*,\III^*,\I_4^*,\I_2^*,\I_0^*,\III,$ or $\I_2$.
In order to see this, assume for instance that $G_1$ is simple of type~$\I_n^*$. The orbit of a simple component of $G_1$ has length $\le 2$ if and only if $n=4$, and a double component has a trivial orbit if and only if $n$ is even (the fixed component is the central one). The other cases are analogous.

By \Cref{tab:elliptic}, the previous discussion already gives the desired claim if $|2F_1|$ is elliptic. Thus assume that $|2F_1|$ is quasi-elliptic; in particular, $p=2$ and the half-fiber $F_1$ is additive.
There is a subgroup $H \subseteq \MW(|2F_1|)$ of index at most $2$ (resp. at most $1$ if $X$ is not classical) and which preserves the half-fiber~$F_0$. Let $E_1$ be the component of $F_1$ meeting $F_0$. Since $H$ fixes $F_0 \cap E_1$ and any singular point of $F_1$ on $E_1$ and these points are distinct by \cite[Lemma 3.5]{DolgachevMartinNUM}, it fixes $E_1$ pointwise. Here, we use again the fact that involutions of $\mathbb{P}^1$ and the cuspidal cubic have only one fixed point in characteristic $2$. Thus, $H$ fixes the base of $|2F_0|$ and acts with a fixed point on a general fiber of $|2F_0|$, hence, as $|2F_0|$ is elliptic, $H$ contains at most one non-trivial involution (cf. \Cref{lem: sign involution}).
Thus, $\MW(|2F_1|)\cong (\IZ/2\IZ)^a$ with $a\le 2$ (resp. $a \leq 1$ if $X$ is not classical) and the claim follows by \Cref{tab:quasi-elliptic}.
\end{proof}

\begin{corollary} \label{cor: alternative}
In the setting of \Cref{prop: alternative}, assume that $|2F_0|$ is not extremal. Then, $|2F_1|$ is extremal with reducible fibers
\[
    (\I_4^*), \; (\III^*,\III), \; (\III^*,\I_2), \; (\I_2^*,\III,\III) \; \text{or} \; (\I_2^*,\I_2,\I_2).
\]
Moreover, $|2F_1|$ admits a simple reducible fiber $G_1$ such that $F_0$ meets two distinct simple components of $G_1$. 
\end{corollary}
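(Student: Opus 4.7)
Starting from \Cref{prop: alternative}, the fibration $|2F_1|$ is extremal with reducible fibers in one of seven possible configurations; the task is to prune this list to the five stated in the corollary and to establish the ``moreover'' assertion.

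First I would observe the \emph{intersection trichotomy}: since $F_0 \cdot G_1 = 2(F_0 \cdot F_1) = 2$ for every fiber $G_1 = \sum k_i C_i$ of $|2F_1|$, and $F_0 \cdot C_i \geq 0$, the only possibilities are (a) $F_0$ meets one $k_i = 1$ component with multiplicity $2$; (b) $F_0$ meets one $k_i = 2$ component with multiplicity $1$; or (c) $F_0$ meets two distinct $k_i = 1$ components, each with multiplicity $1$. Since all $k_i$ are even for half-fibers of $|2F_1|$, only an analogue of (b) arises there, with $F_0$ meeting a single lowest-multiplicity component.

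Next, I would prove the ``moreover'' clause by contradiction. Assume no simple reducible fiber of $|2F_1|$ is in case~(c); then $F_0$ meets exactly one component of each reducible fiber of $|2F_1|$, and the orthogonal $(-2)$-curves among those components number $\sum_{G_1}(\#\mathrm{comp}(G_1) - 1) = 8$, since $|2F_1|$ is extremal. By Hodge Index these eight curves lie in fibers of $|2F_0|$; in each reducible fiber of $|2F_1|$ the $c-1$ components orthogonal to $F_0$ span a rank-$(c-1)$ sublattice, and components from distinct fibers are mutually orthogonal, so the total span inside $E_8 \cong F_1^\perp/\mathbb{Z} F_1$ has rank $8$. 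A short computation using $F_0 \cdot F_1 = 1$ and $F_0 \cdot C_i = 0$ shows that $F_1$ is not in the $\mathbb{Q}$-span of the eight curves, so they also span rank $8$ in $\Num(X) \otimes \mathbb{Q}$. Let $M_0 \subset \Num(X)$ be the sublattice generated by fiber components of $|2F_0|$, so that $\operatorname{rk} M_0 = 1 + r_0$, where $r_0$ is the rank of the root lattice of reducible fibers of $|2F_0|$. The eight orthogonal curves lie in $M_0$, giving $r_0 \geq 7$. If $r_0 = 7$, they span $M_0 \otimes \mathbb{Q}$; since $F_0 \in M_0 \otimes \mathbb{Q}$, it would lie in their $\mathbb{Q}$-span, forcing $F_0 \cdot F_1 = 0$, a contradiction. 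Hence $r_0 \geq 8$, contradicting the non-extremality of $|2F_0|$ (which requires $r_0 \leq 7$).

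This establishes the ``moreover'' clause and immediately rules out $(\II^*)$, since $\widetilde{E}_8$ has only one multiplicity-$1$ component and so cannot admit case~(c). The main obstacle is excluding $(\I_0^*,\I_0^*)$: the rank argument is not tight, since with $F_0$ in case (c) on one $\I_0^*$ and in case (a) or (b) on the other only seven orthogonal components appear. To handle this case I would exploit the $\MW(|2F_1|) \cong (\mathbb{Z}/2\mathbb{Z})^2$-action, which is simply transitive on the four multiplicity-$1$ components of each $\I_0^*$ and preserves the pencil $|2F_0|$: tracking the $\MW$-equivariant intersection pattern on both $\I_0^*$ fibers should force additional orthogonal $(-2)$-curves into $M_0$ and push $r_0$ to at least $8$. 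A complementary approach is to invoke the classification: Enriques surfaces carrying a genus-$1$ fibration with two reducible fibers of type $\widetilde{D}_4$ are known to have finite automorphism group (cf.~\Cref{prop:Enriques.finite.Aut}), which directly contradicts the non-extremality of $|2F_0|$.
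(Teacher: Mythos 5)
Your route is essentially the paper's: the ``moreover'' clause is proved by the same rank count (if $F_0$ met only one component of every reducible fiber of $|2F_1|$, the $8$ components orthogonal to $F_0$ would span a negative definite rank-$8$ lattice of vertical classes for $|2F_0|$, forcing $|2F_0|$ to be extremal), and $(\II^*)$ is then excluded because its unique reducible fiber has a single simple component. The one place where your write-up is not yet a proof is the $(\I_0^*,\I_0^*)$ case, which you leave at the level of ``should force''. The missing two lines are exactly the observation already made in the proof of \Cref{prop: alternative}: $\MW(|2F_1|)\cong(\IZ/2\IZ)^2$ preserves the numerical class of $F_0$, so the set of components of a fiber meeting $F_0$ is $\MW$-stable; since $\MW$ acts transitively on the four simple components of a simple $\I_0^*$ fiber, if $F_0$ met one simple component it would meet all four and $F_0.G_1\ge 4>2$. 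Hence only the central component can meet $F_0$, case (c) never occurs for this configuration, and your rank count (now with $4+4=8$ orthogonal simple components) yields the contradiction --- this is precisely how the paper disposes of $(\I_0^*,\I_0^*)$, by reducing it to the ``moreover'' clause. By contrast, your ``complementary approach'' does not work: \Cref{prop:Enriques.finite.Aut} states that $\Aut(X)$ is finite if and only if \emph{every} genus~$1$ fibration on $X$ is extremal, so the presence of one extremal fibration with fibers $(\I_0^*,\I_0^*)$ says nothing about $\Aut(X)$, and no classification result of the form you invoke is established in the paper (indeed, $|2F_0|$ is non-extremal by hypothesis, so you cannot conclude finiteness of $\Aut(X)$ this way). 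Drop that fallback and flesh out the $\MW$-equivariance argument, and the proof is complete and identical in substance to the paper's.
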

\begin{proof}
It suffices to prove the last statement. Indeed, a fiber of type~$\II^*$ has only one simple component and we know from the proof of \Cref{prop: alternative} that a simple fiber of type~$\I_0^*$ can only appear if its central component meets $F_0$.

To prove the last statement, it suffices to note that if $F_0$ meets only one component of every fiber of $|2F_1|$, then the lattice spanned by fiber components of $|2F_1|$ that are orthogonal to $F_0$ has rank $8$, hence $|2F_0|$ is extremal. Thus, there must be a fiber of $|2F_1|$ which has two distinct components meeting $F_0$, and this fiber is necessarily simple and reducible. 
\end{proof}

Recall that by \Cref{prop:Enriques.zero.entropy}, any Enriques surface of zero entropy with infinite automorphism group admits a unique non-extremal genus~\(1\) fibration (necessarily elliptic), which we always denote by \(|2F_0|\). Being preserved by the whole \(\Aut(X)\), the fibration \(|2F_0|\) is preserved by \(\MW(|2F|)\) for every fibration \(|2F|\) on~\(X\).

\begin{lemma} \label{lem: excludeXX}
Let $X$ be an Enriques surface of zero entropy with infinite automorphism group. Let $|2F_0|$ be the unique non-extremal fibration and let $F_1$ be a half-fiber with $F_0.F_1 = 1$. Assume that $X$ is not of type~$\XX$. Then, $|2F_1|$ is extremal with reducible fibers
\[
    (\I_4^*), \; (\III^*,\III) \; \text{or} \; (\III^*,\I_2).
\]
\end{lemma}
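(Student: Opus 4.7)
The plan is to invoke \Cref{cor: alternative} to reduce to the five possible configurations for the reducible fibers of \(|2F_1|\), and then to exclude the last two, \((\I_2^*, \III, \III)\) and \((\I_2^*, \I_2, \I_2)\), by showing that each forces \(X\) to be of type \(\XX\), contradicting the hypothesis.

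In both excluded cases, \(|2F_1|\) has a reducible fiber of type \(\I_2^*\) (namely \(2F_1\) itself), whose support is a \(\tilde{D}_6\) configuration of \((-2)\)-curves on \(X\). It also contains a further simple reducible fiber of type \(\III\) or \(\I_2\), whose support is a \(\tilde{A}_1\) configuration disjoint from the previous one, since components of distinct fibers of the same fibration do not meet. Hence \(X\) carries \((-2)\)-curves whose dual graph contains \(\XX\) at its core. To finish, I would use the equivalent characterization \Cref{lem: xxgeneralities}(1): \(X\) is of type \(\XX\) if and only if it admits a genus~\(1\) fibration with half-fibers of types \(\I_2^*\) and \(\III\).

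For the case \((\I_2^*, \III, \III)\), this configuration does not occur in \Cref{tab:elliptic}, so \(|2F_1|\) must be quasi-elliptic and \(p = 2\). Moreover, \(\MW(|2F_1|) \cong (\IZ/2\IZ)^2\) by \Cref{tab:quasi-elliptic}, so by the Mordell--Weil bound established at the end of the proof of \Cref{prop: alternative}, \(X\) must be classical. Then \Cref{lem: halffibers} yields a second half-fiber \(F_1'\) of \(|2F_1|\), and I would identify \(F_1'\) with one of the two \(\III\) fibers rather than with an ordinary elliptic curve, using \(F_0 \cdot F_1' = 1\) together with the constraints on the action of \(\MW(|2F_1|)\) on additive fibers from \Cref{lem: action.on.additive}. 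The case \((\I_2^*, \I_2, \I_2)\) is elliptic, again with \(p = 2\) since \(\I_2^*\) is additive. The main obstacle is that here \(|2F_1|\) contains no additive simple fiber that could play the role of a \(\III\) half-fiber, so one must construct a different, quasi-elliptic, fibration on \(X\) with the required half-fibers of types \(\I_2^*\) and \(\III\). This would be achieved using translations by elements of \(\MW(|2F_1|) \cong (\IZ/2\IZ)^2\) and auxiliary \((-2)\)-curves arising from the intersection of \(F_0\) with the \(\I_2^*\) fiber, thereby reducing this case to the previous one.
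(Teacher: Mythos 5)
Your reduction via \Cref{cor: alternative} matches the paper's, but the parenthetical ``(namely \(2F_1\) itself)'' hides the main difficulty: nothing guarantees that the \(\I_2^*\) fiber is the double fiber. A priori \(F_1\) could be irreducible (e.g.\ a cuspidal cubic) or supported on one of the other reducible fibers, in which case the \(\I_2^*\) fiber is \emph{simple}, and your subsequent plan — identifying the second half-fiber of \(|2F_1|\) with a \(\III\) configuration — never gets started. The paper spends the first and larger half of its proof on exactly this case: if the \(\I_2^*\) fiber \(G_1\) is simple, then since \(\MW(|2F_1|)\cong(\IZ/2\IZ)^2\) acts transitively on its four simple components while \(F_0.G_1=F_0.(2F_1)=2\), the half-fiber \(F_0\) can only meet the central double component, so the components of \(G_1\) orthogonal to \(F_0\) form an \(A_3\cup A_3\); distributing this configuration among the fibers of \(|2F_0|\) then always produces either a half-fiber \(F_2\) of type \(\I_6\) or \(\I_4\) with \(F_0.F_2=1\), or a simple \(\I_0^*\) fiber meeting \(F_0\) in two components — each contradicting \Cref{prop: alternative} or \Cref{cor: alternative}. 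This case analysis is the heart of the lemma and cannot be skipped.

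There are two further problems in the part you do address. For \((\I_2^*,\I_2,\I_2)\): once the \(\I_2^*\) fiber is known to be \(2F_1\), it is an additive half-fiber, so \(p=2\) by \Cref{lem: halffibers}, and an \emph{elliptic} fibration in characteristic \(2\) cannot have \((\IZ/2\IZ)^2\) in its Mordell--Weil group (an elliptic curve in characteristic \(2\) has at most one nontrivial rational \(2\)-torsion point). So this configuration with a double \(\I_2^*\) is vacuous, and the auxiliary quasi-elliptic fibration you propose to construct is neither needed nor plausibly obtainable by ``translations and auxiliary curves''; the actual content of the \((\I_2^*,\I_2,\I_2)\) case lives entirely in the simple-\(\I_2^*\) analysis above. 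For \((\I_2^*,\III,\III)\): \Cref{lem: action.on.additive} is not the right tool to show that the second half-fiber \(F_1'\) is reducible (it only controls the action on the half-fiber \(F_1\) itself). The paper's argument is that if \(F_1'\) were an irreducible cuspidal curve, every element of \(\MW(|2F_1|)\) would fix its cusp and its intersection point with the curve of cusps, hence act trivially on \(F_1'\); since \(F_1'\) is a bisection of \(|2F_0|\), the group would then fix the base of \(|2F_0|\) pointwise and have fixed points on general fibers, forcing \(|\MW(|2F_1|)|\le 2\) by \Cref{lem: sign involution}, contradicting \((\IZ/2\IZ)^2\).
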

\begin{proof}
By \Cref{cor: alternative}, we have to show that if the reducible fibers of $|2F_1|$ are of type~$(\I_2^*,\III,\III)$ or $(\I_2^*,\I_2,\I_2)$, then $X$ is of type~$\XX$. Denote by $G_1\in |2F_1|$ the fiber of type~$\I_2^*$.

Assume first that the fiber $G_1$ is simple.
By the proof of \Cref{prop: alternative}, the union $\Gamma$ of the components of \(G_1\) orthogonal to $F_0$ is of type~$A_3\cup A_3$. 
If $\Gamma$ is contained in a single fiber $G_0 \in |2F_0|$, then $G_1$ must be simple of type~$\I_8$, since the central component of $G_1$ is a bisection of $|2F_0|$, so the adjacent components must be simple in $G_0$. 
If instead there are two fibers $G_0,G_0'\in |2F_0|$ containing $\Gamma$, then, one of them, say $G_0$, must be of type~$\I_4$, for otherwise $|2F_0|$ would be extremal. We get three possible diagrams (the last two according to whether \(G_0\) is double or simple):

\begin{center}
\begin{tikzpicture}[scale=0.6]
    \node (R1) at (0,0) [nodal] {};
    \node (R3) at (2,0) [nodal] {};
    \node (R4) at (2,-1) [nodal] {};
    \node (R5) at (2,-2) [nodal] {};
    \node (R7) at (0,-2) [nodal] {};
    \node (R8) at (0,-1) [nodal] {};
    \node (R9) at (1,-1) [nodal] {};
    \node (RX) at (1,0.5) [nodal] {};
    \node (RXX) at (1,-2.5) [nodal] {};
    \draw (R9)--(R8) (R9)--(R4) (R3)--(R5) (R7)--(R1) (R1)--(RX)--(R3) (R7)--(RXX)--(R5);
\end{tikzpicture}
\qquad
\begin{tikzpicture}[scale=0.6]
    \node (R1) at (0,0) [nodal, label=left:$R_1$] {};
    \node (R3) at (2,0) [nodal] {};
    \node (R4) at (2,-1) [nodal] {};
    \node (R5) at (2,-2) [nodal] {};
    \node (R7) at (0,-2) [nodal, label=left:$R_2$] {};
    \node (R8) at (0,-1) [nodal] {};
    \node (R9) at (1,-1) [nodal] {};
    \node (RXX) at (-1,-1) [nodal, label=left:$R$] {};
    \draw (R9)--(R8) (R9)--(R4) (R3)--(R5) (R7)--(R1) (R1)--(RXX)--(R7);
\end{tikzpicture}
\qquad
\begin{tikzpicture}[scale=0.6]
    \node (R1) at (0,0) [nodal] {};
    \node (R3) at (2,0) [nodal] {};
    \node (R4) at (2,-1) [nodal] {};
    \node (R5) at (2,-2) [nodal] {};
    \node (R7) at (0,-2) [nodal] {};
    \node (R8) at (0,-1) [nodal] {};
    \node (R9) at (1,-1) [nodal] {};
    \node (RXX) at (-1,-2.5) [nodal] {};
    \draw (R9)--(R8) (R9)--(R4) (R3)--(R5) (R7)--(R1) (R1)--(RXX)--(R7);
    \draw (RXX) to[bend right=45] (R9);
\end{tikzpicture}
\end{center}
In the first (resp. third) diagram, we find a half-fiber $F_2$ of type~$\I_6$ (resp. $\I_4$) such that $F_0.F_2=1$, contradicting \Cref{cor: alternative}.
In the second diagram, the $(-2)$-curve~$R$ is a bisection of $|2F_1|$. Let $G_1'$ and $G_1''$ be the other two reducible fibers in $|2F_1|$, with components $R_3,R_3'$ and $R_4,R_4'$ respectively. If $G_1'$ (or $G_1''$) is double, then $R$ meets one of its components with multiplicity $1$, say $R_3$ (resp. $R_4$). If instead $G_1'$ is simple, then, by \Cref{cor: actiononsimplefiber}, there is an element in $\MW(|2F_1|)$ exchanging its components, and therefore $F_0.R_3=F_0.R_3'$. In particular, $R.R_3=R.R_3'=1$. In both cases, we have that $R.R_3=R.R_4=1$, obtaining the following diagram:
\[
\begin{tikzpicture}[scale=0.6]
    \node (A) at (-2,0) [nodal,label=left:$R_3$] {};
    \node (B) at (-2,-2) [nodal,label=left:$R_4$] {};
    \node (R1) at (0,0) [nodal] {};
    \node (R3) at (2,0) [nodal,fill=white] {};
    \node (R4) at (2,-1) [nodal,fill=white] {};
    \node (R5) at (2,-2) [nodal,fill=white] {};
    \node (R7) at (0,-2) [nodal] {};
    \node (R8) at (0,-1) [nodal,fill=white] {};
    \node (R9) at (1,-1) [nodal,fill=white] {};;
    \node (RXX) at (-1,-1) [nodal] {};
    \draw (R9)--(R8) (R9)--(R4) (R3)--(R4)--(R5) (R7)--(R8)--(R1) (R1)--(RXX)--(R7) (A)--(RXX)--(B);
\end{tikzpicture}
\]
However, the simple fiber $G_2$ in bold of type~$\I_0^*$ satisfies $G_2.F_0=2$, contradicting \Cref{cor: alternative}.

On the other hand, assume that $G_1\in |2F_1|$ is a double fiber, say \(G_1 = 2F_1\).
Then $p=2$ and the fibration $|2F_1|$ is quasi-elliptic by \cite[§2A]{Lang2} or alternatively by \Cref{tab:elliptic}, because the $2$-torsion subgroup of an elliptic curve in characteristic~$2$ has order at most~$2$. Denote by $R$ its curve of cusps.
By the last paragraph of the proof of \Cref{prop: alternative}, $|2F_0|$ has two half-fibers, hence $X$ is classical. Thus, by \Cref{lem: xxgeneralities}, it suffices to prove that the second half-fiber $F_1'$ of $|2F_1|$ is reducible (and therefore of type~$\III$).
If by contradiction $F_1'$ is irreducible, then every $g\in \MW(|2F_1|)$ preserves its singular point and the smooth point $F_1'\cap R$, so $\MW(|2F_1|)$ acts trivially on $F_1'$. Since $F_1'$ is a bisection of $|2F_0|$, the group $\MW(|2F_1|)$ fixes the base of the elliptic fibration $|2F_0|$ pointwise, and it fixes the point(s) $G_0\cap F_1$ for a general $G_0\in |2F_0|$, so $|{\MW(|2F_1|)}| \le 2$ by \autoref{lem: sign involution}, a contradiction. 
\end{proof}

By \Cref{thm:AA,thm:EE,thm:XX}, every Enriques surface of type \(\AA\), \(\EE\) or \(\XX\) has zero entropy.
Thus, to finish the proof of \Cref{thm: main}, it suffices by \Cref{prop:Enriques.zero.entropy} to show that if an Enriques surface $X$ admits a unique genus $1$ fibration with infinite Mordell--Weil group, then $X$ is of type \(\AA\), \(\EE\) or \(\XX\).

\begin{proof}[Proof of \Cref{thm: main}]
Let \(X\) be an Enriques surface  with a unique non-extremal fibration \(|2F_0|\).
In particular, \(|2F_0|\) is preserved by every automorphism of \(X\). By \cite[Theorem~3.2.1]{Cossec.Dolgachev} or \cite[Theorem~2.3.3]{Enriques_I}, there exists a half-fiber \(F_1\) with \(F_0.F_1 = 1\).

By \Cref{lem: excludeXX}, we may assume that there is a reducible fiber $G_1\in |2F_1|$ of type~$\I_4^*$ or $\III^*$.
Let $\Gamma$ be the union of the components of \(G_1\) orthogonal to $F_0$, and let $G_0 \in |2F_0|$ be the fiber containing $\Gamma$.

Assume first that $G_1$ is of type~$\I_4^*$. By \Cref{cor: alternative} and the proof of \Cref{prop: alternative}, $G_1$ is simple and $\Gamma$ is of type~$A_7$. Since \(|2F_0|\) is not extremal, $G_0$ is a (simple or double) fiber of type~$\I_8$, or a simple fiber of type~$\III^*$, leading to the following three possible graphs.

\[
\begin{tikzpicture}[scale=0.4]
    \node (R1) at (180:2) [nodal] {};
    \node (R2) at (135:2) [nodal] {};
    \node (R3) at (90:2) [nodal] {};
    \node (R4) at (45:2) [nodal] {};
    \node (R5) at (0:2) [nodal] {};
    \node (R6) at (315:2) [nodal] {};
    \node (R7) at (270:2) [nodal] {};
    \node (R8) at (225:2) [nodal] {};
    \node (R9) at (intersection of R2--R7 and R3--R8) [nodal] {};
    \node (R10) at (intersection of R4--R7 and R3--R6) [nodal] {};
    \draw (R6)--(R7)--(R8)--(R1)--(R2)--(R3)--(R4) (R1)--(R9) (R5)--(R10) (R4)--(R5)--(R6);
    \draw (R7)--(R8)--(R1)--(R2)--(R3)--(R4) (R1)--(R9) ;
\end{tikzpicture}
\qquad
\begin{tikzpicture}[scale=0.4]
    \node (R1) at (180:2) [nodal] {};
    \node (R2) at (135:2) [nodal] {};
    \node (R3) at (90:2) [nodal] {};
    \node (R4) at (45:2) [nodal] {};
    \node (R5) at (0:2) [nodal] {};
    \node (R6) at (315:2) [nodal] {};
    \node (R7) at (270:2) [nodal] {};
    \node (R8) at (225:2) [nodal] {};
    \node (R9) at (intersection of R2--R7 and R3--R8) [nodal] {};
    \node (R10) at (intersection of R4--R7 and R3--R6) [nodal] {};
    \draw (R6)--(R7)--(R8)--(R1)--(R2)--(R3)--(R4) (R1)--(R9) (R5)--(R10) (R4)--(R5)--(R6);
    \draw (R7)--(R8)--(R1)--(R2)--(R3)--(R4) (R1)--(R9) (R3)--(R9) (R3)--(R10);
\end{tikzpicture}
\qquad
\begin{tikzpicture}[scale=0.5]
    \clip (-2.5,-1.3) rectangle (4.5,1.5);
    \node (R4) at (0,0) [nodal] {};
    \node (R5) at (1,1) [nodal] {};
    \node (R6) at (1,0) [nodal] {};
    \node (R7) at (2,0) [nodal] {};
    \node (R8) at (3,0) [nodal] {};
    \node (R9) at (4,0) [nodal] {};
    \node (R3) at (-1,0) [nodal] {};
    \node (R2) at (-2,0) [nodal] {};
    \node (R1) at (3,1) [nodal] {};
    \node (RX) at (-1,1) [nodal] {};
\draw (R2)--(R3)--(R6) (R5)--(R6)--(R9) (R1)--(R8) (R3)--(RX);
\end{tikzpicture}
\]
In the first case, $X$ is of type \(\AA\). In the second case, any half-fiber $F_2$ of type~$\I_4$ in the graph satisfies $F_0.F_2=1$, contradicting \Cref{prop: alternative}. In the third case, any half-fiber $F_2$ of type~$\I_2^*$ in the graph satisfies $F_0.F_2=1$, so $X$ is of type~$\XX$ by \Cref{lem: excludeXX}.

Assume now that $G_1$ is of type~$\III^*$. If $G_1$ is a double fiber, then $\Gamma$ is of type~$E_7$. Since $|2F_0|$ is not extremal, we deduce that $G_0$ is of type~$\III^*$. Since $G_0$ and~$G_1$ share components, $G_0$ must be simple by \cite[Lemma~3.5]{DolgachevMartinNUM}. Thus, $X$ contains \((-2)\)-curves with the following dual graph:
\begin{center}
\begin{tikzpicture}[scale=0.5]
    \node (R4) at (0,0) [nodal] {};
    \node (R5) at (1,1) [nodal] {};
    \node (R6) at (1,0) [nodal] {};
    \node (R7) at (2,0) [nodal] {};
    \node (R8) at (3,0) [nodal] {};
    \node (R9) at (4,0) [nodal] {};
    \node (R3) at (-1,0) [nodal] {};
    \node (R2) at (-2,0) [nodal] {};
    \node (R1) at (3,1) [nodal] {};
\draw (R2)--(R3)--(R6) (R5)--(R6)--(R9) (R1)--(R8);
\end{tikzpicture}
\end{center}
As in the previous case, the half-fiber $F_2$ of type~$\I_2^*$ satisfies $F_0.F_2=1$, so $X$ is of type~$\XX$ by \Cref{lem: excludeXX}.

If instead $G_1$ is a simple fiber, then $\Gamma$ is of type~$A_7$ or $E_6$, by \Cref{cor: actiononsimplefiber}, \Cref{tab:elliptic} and \Cref{tab:quasi-elliptic}. In the first case, $G_0$ is a (double or simple) fiber of type~$\I_8$, while in the second case, $G_0$ is a (double or simple) fiber of type~$\IV^*$, or a simple fiber of type~$\III^*$. We get the following possible dual graphs:
\begin{center}
\begin{tikzpicture}[scale=0.4]
    \node (R1) at (180:2) [nodal] {};
    \node (R2) at (135:2) [nodal] {};
    \node (R3) at (90:2) [nodal] {};
    \node (R4) at (45:2) [nodal] {};
    \node (R5) at (0:2) [nodal] {};
    \node (R6) at (315:2) [nodal] {};
    \node (R7) at (270:2) [nodal] {};
    \node (R8) at (225:2) [nodal] {};
    \node (R9) at (intersection of R2--R7 and R3--R8) [nodal] {};
    \draw (R6)--(R7)--(R8)--(R1)--(R2)--(R3)--(R4) (R1)--(R9) (R4)--(R5)--(R6);
    \draw (R7)--(R8)--(R1)--(R2)--(R3)--(R4) (R1)--(R9) ;
\end{tikzpicture}
\quad
\begin{tikzpicture}[scale=0.4]
    \node (R1) at (180:2) [nodal] {};
    \node (R2) at (135:2) [nodal] {};
    \node (R3) at (90:2) [nodal] {};
    \node (R4) at (45:2) [nodal] {};
    \node (R5) at (0:2) [nodal] {};
    \node (R6) at (315:2) [nodal] {};
    \node (R7) at (270:2) [nodal] {};
    \node (R8) at (225:2) [nodal] {};
    \node (R9) at (0,0)[nodal] {};
    \draw (R6)--(R7)--(R8)--(R1)--(R2)--(R3)--(R4) (R1)--(R9)--(R5) (R4)--(R5)--(R6);
    \draw (R7)--(R8)--(R1)--(R2)--(R3)--(R4) (R1)--(R9) ;
\end{tikzpicture}
\qquad
    \begin{tikzpicture}[scale=0.8]
        \node (R1) at (90:1) [nodal] {};
        \node (R2) at (90:0.5) [nodal] {};
        \node (R3) at (0:0) [nodal] {};
        \node (R4) at (210:0.5) [nodal] {};
        \node (R5) at (210:1) [nodal] {};
        \node (R6) at (210:1.5) [nodal] {};
        \node (R7) at (330:0.5) [nodal] {};
        \node (R8) at (330:1) [nodal] {};
        \node (R9) at (330:1.5) [nodal] {};
        \draw (R1)--(R2)--(R3)--(R4)--(R5)--(R6) (R3)--(R7)--(R8)--(R9);
    \end{tikzpicture}
\quad
    \begin{tikzpicture}[scale=0.8]
        \node (R1) at (90:1) [nodal] {};
        \node (R2) at (90:0.5) [nodal] {};
        \node (R3) at (0:0) [nodal] {};
        \node (R4) at (210:0.5) [nodal] {};
        \node (R5) at (210:1) [nodal] {};
        \node (R6) at (210:1.5) [nodal] {};
        \node (R7) at (330:0.5) [nodal] {};
        \node (R8) at (330:1) [nodal] {};
        \node (R9) at (330:1.5) [nodal] {};
        \draw (R1)--(R2)--(R3)--(R4)--(R5)--(R6) (R3)--(R7)--(R8)--(R9) (R1)--(R9) (R1)--(R6);
    \end{tikzpicture}
\qquad
\begin{tikzpicture}[scale=0.5]
    \clip(-2.5,-0.2) rectangle (4.5,2);
    \node (R4) at (0,0) [nodal] {};
    \node (R5) at (1,1) [nodal] {};
    \node (R6) at (1,0) [nodal] {};
    \node (R7) at (2,0) [nodal] {};
    \node (R8) at (3,0) [nodal] {};
    \node (R9) at (4,0) [nodal] {};
    \node (R3) at (-1,0) [nodal] {};
    \node (R2) at (-2,0) [nodal] {};
    \node (R1) at (3,1) [nodal] {};
    \node (RX) at (-1,1) [nodal] {};
\draw (R2)--(R3)--(R6) (R5)--(R6)--(R9) (R1)--(R8) (R3)--(RX);
\end{tikzpicture}
\end{center}
In the second and fourth graph, we find a half-fiber \(F_2\) of type \(\I_6\) with \(F_0.F_2 = 1\), contradicting again \Cref{prop: alternative}.
As above, the last graph leads to Enriques surfaces of type \(\XX\) by \Cref{lem: excludeXX}.
In the first and third case, the component of $G_0$ that is not contained in $G_1$ meets at least one component of the other reducible fiber of $|2F_1|$ transversally by \Cref{cor: alternative}. Thus, in these cases, we obtain the graph of types \(\AA\) and \(\EE\)), respectively.
\end{proof}

\renewcommand{\discretionary}{}
\printbibliography

\end{document}